\journal{Computers \& Mathematics with Applications}
\theoremstyle{plain}
\newtheorem{theorem}{Theorem}[section]
\newtheorem{lemma}[theorem]{Lemma}
\theoremstyle{definition}
\newtheorem{example}[theorem]{Example}
\theoremstyle{remark}
\numberwithin{equation}{section}
\numberwithin{theorem}{section}
\numberwithin{remark}{section}
\newdimen\cdsep
\def\cdstrut{\vrule height .6\cdsep width 0pt depth .4\cdsep}
\def\@cdstrut{{\advance\cdsep by 2em\cdstrut}}
\def\arrow#1#2{
  \ifx d#1
    \llap{$\scriptstyle#2$}\left\downarrow\cdstrut\right.\@cdstrut\fi
  \ifx u#1
    \llap{$\scriptstyle#2$}\left\uparrow\cdstrut\right.\@cdstrut\fi
  \ifx r#1
    \mathop{\hbox to \cdsep{\rightarrowfill}}\limits^{#2}\fi
  \ifx l#1
    \mathop{\hbox to \cdsep{\leftarrowfill}}\limits^{#2}\fi
}
\newcommand{\K}{\Omega}  
\newcommand{\C}[1]{\mathfrak{H}^#1} 
\newcommand{\Tr}[2]{\mathrm{Tr}[#1](#2)}  
\newcommand{\tr}[2]{\mathrm{tr}[#1](#2)}  
\newcommand{\Vkcirc}[2]{\accentset{\circ}{V}_{#1} \Lambda^#2} 
\newcommand{\Vk}[2]{V_{#1} \Lambda^{#2}} 
\newcommand{\Sk}[2]{\Sigma^{#1,#2}} 
\newcommand{\VtoM}[1]{\mathcal{L}\left(#1\right)} 
\begin{document}

\begin{frontmatter}

\title{Conforming Finite Element Function Spaces in Four Dimensions, Part 1: Foundational Principles and the Tesseract}


\author{Nilima Nigam} 
\address{Department of Mathematics, Simon Fraser University, Burnaby, British Columbia BC V5C 2V3, Canada}


\author{David M. Williams \corref{mycorrespondingauthor}} 
\address{Department of Mechanical Engineering, The Pennsylvania State University, University Park, Pennsylvania 16802, United States}

\cortext[mycorrespondingauthor]{Corresponding author}
\ead{david.m.williams@psu.edu}

\begin{abstract}
The stability, robustness, accuracy, and efficiency of space-time finite element methods crucially depend on the choice of approximation spaces for test and trial functions. This is especially true for high-order, mixed finite element methods which often must satisfy an inf-sup condition in order to ensure stability. With this in mind, the primary objective of this paper and a companion paper is to provide a wide range of explicitly stated, conforming, finite element spaces in four-dimensions. In this paper, we construct explicit high-order conforming finite elements on 4-cubes (tesseracts); our construction uses tools from the recently developed `Finite Element Exterior Calculus'. With a focus on practical implementation, we provide details including Piola-type transformations, and explicit expressions for the volumetric, facet, face, edge, and vertex degrees of freedom. In addition, we establish important theoretical properties, such as the exactness of the finite element sequences, and the unisolvence of the degrees of freedom.
\end{abstract}

\begin{keyword}
space-time; finite element methods; tesseract; four dimensions; finite element exterior calculus
\MSC[2010] 14F40, 52B11, 58A12, 65D05, 74S05 
\end{keyword}

\end{frontmatter}



\section{Introduction}

Our goal in this paper and the companion paper (part II) is to provide explicit high-order conforming families of finite element spaces on commonly used elements in $\mathbb{R}^4$. We consider three types of finite elements: a) hypercube elements which are generalizations of quadrilaterals to higher dimensional spaces, b) simplex elements which are generalizations of triangles to higher dimensional spaces, and c) hybrid elements which are tensor products of simplex elements with lower-dimensional simplex or hypercube elements. In accordance with principles of four-dimensional geometry, the \emph{tesseract} is an element of type a, the \emph{pentatope} is an element of type b, and the \emph{tetrahedral prism} is an element of type c; these latter elements are extensively used in space-time finite element methods which are increasingly important in science and engineering.

In part I of this paper, we discuss important principles of finite element construction, and develop conforming finite element spaces on the tesseract. Next, in part II, we develop conforming finite element spaces on the pentatope and tetrahedral prism. Finite element spaces on degenerate spatial elements such as the bipentatope and the cubic pyramid will be considered in future work.

We emphasize that finite element differential forms on tesseracts (and indeed $d$-dimensional cubes) have already been elegantly described within the Finite Element Exterior Calculus (FEEC) framework, see~\cite{arnold2014finite} for the analog of the $\mathcal{P}_k \Lambda^s$ spaces, and~\cite{arnold2015finite} for the analog of the $\mathcal{P}_k^{-} \Lambda^s$ spaces. The latter paper uses a tensor-product construction for the discrete de Rham complex; this is also our approach in this paper. The authors in~\cite{arnold2015finite} furthermore provide concrete approximation properties. 
The contribution of the present paper is to provide {\it fully explicit} finite element and bubble spaces for the particular de Rham complex described below on 4-cubical meshes; the degrees of freedom we prescribe are such that they are consistent (through traces) with well-known degrees of freedom on cubical meshes, see~\cite{monkbook}. Where possible, we work with proxies of the differential forms. Our aim is to aid practitioners as they implement these methods for four-dimensional problems.  

\subsection{Background}

We now turn our attention to a broader discussion of four-dimensional finite element methods. Generally speaking, we can construct a standard finite element method by: i) identifying a governing partial differential equation for the problem of interest, ii) determining the appropriate infinite-dimensional Sobolev spaces, iii) tessellating the domain into finite elements, iv) constructing suitable finite-dimensional subspaces of the infinite-dimensional Sobolev spaces on this domain, and v) developing weak formulations of the governing equations using the Galerkin approach. Naturally, this latter step involves using test functions and trial functions which are members of the aforementioned finite-dimensional subspaces, and judiciously employing integration by parts. This process is very well-understood for three-dimensional applications, and is not worthy of significant discussion here. However, in four dimensions, it is not immediately clear how to construct the infinite-dimensional Sobolev spaces, and the associated finite-dimensional subspaces. The main issue is that several new derivative operators arise in four dimensions, that do not have natural precedents in three dimensions. In addition, while the standard three-dimensional operators (such as the curl and divergence operators) can be extended into four dimensions, they usually have different domains and images. For example, the three-dimensional curl operator transforms three-vectors into three-vectors, whereas the four-dimensional curl operator transforms $4 \times 4$ skew-symmetric matrices into four-vectors. In this sense, the two curl operators have completely different characteristics. With this discussion in mind, some care is required as we construct suitable, conforming, finite elements in four-dimensional space.

Generally speaking, our approach for constructing finite element spaces in $ \mathbb{R}^4$ will be relatively unconcerned with spaces that are H1-conforming or L2-conforming, as these two spaces have been treated extensively elsewhere, (see the recent review paper of Frontin et al.~\cite{frontin2021foundations}). In fact, for these simpler cases, one may use Lagrange or Legendre basis functions for four-dimensional hypercube elements, and PKDO-type basis functions for four-dimensional simplex elements~\cite{proriol1957famille,koornwinder1975two,dubiner1991spectral,owens1998spectral}. Thereafter, conformity with H1 is enforced by allowing neighboring elements to share degrees of freedom that reside on the facets, faces, edges, and vertices. L2 conformity can be obtained by requiring that all element degrees of freedom are local, and are not shared across element interfaces. There are many examples of such methods in the literature; however, for now we will simply highlight the work of Diosady, Murman, and coworkers~\cite{diosady2015higher,diosady2017tensor,diosady2018linear,diosady2019scalable,franciolini2020multigrid}. They have spent considerable effort developing L2-conforming space-time finite element methods (space-time discontinuous Galerkin methods) on tesseracts, for simulating fluid dynamics and elasticity problems. These methods operate on partially-unstructured meshes formed by extruding three-dimensional unstructured meshes of hexahedra in the temporal direction in order to obtain meshes of tesseracts. The computational solution on each element is stabilized using `entropy variables'~\cite{hughes1986new}, in conjunction with space-time numerical fluxes that introduce dissipation which is proportional to jumps in the solution. A key advantage of these methods is that they leverage computational optimizations which are only possible for tensor-product elements.

As one might expect, additional technical challenges arise for spaces that reside between H1 and L2 in terms of smoothness, such as the H(curl) and H(div) spaces, (as we discussed above). These spaces reside in what we will henceforth refer to as the `H1-L2 gap'. Of course, naturally all H1-conforming spaces are also H(curl)- and H(div)-conforming, but we will only refer to a space as H(curl)-conforming if it fails to possess additional smoothness, i.e.~it does not simultaneously satisfy the smoothness requirements of H(curl) \emph{and} H1. Now, having established this terminology, we can state the main objective of this paper and its sequel more precisely: our objective is to identify explicit expressions for four-dimensional finite element spaces which are conforming to infinite-dimensional Sobolev spaces in the H1-L2 gap. These Sobolev spaces will be isotropic in nature, as they will maintain the same level of smoothness in all four coordinate directions.

It remains for us to discuss our intended approach for developing four-dimensional finite element spaces. Fortunately, this task can be accomplished in a relatively straightforward manner using techniques from FEEC. This framework was originally developed by Arnold, Falk, and Winther~\cite{arnold2006finite,arnold2010finite,arnold2018finite}, and has since been extended by many researchers. Broadly speaking, FEEC is a mathematical framework which uses the language of differential forms to construct conforming finite elements spaces in any number of dimensions. The resulting finite element spaces can be used to construct exact sequences and commuting diagrams, which facilitate proofs of stability and error estimates for the associated methods. In addition, FEEC can be used to rigorously classify different types of finite elements in accordance with the `Periodic Table of Finite Elements', developed by Arnold and Logg~\cite{arnold2014periodic}. Although FEEC is applicable to high-dimensional problems, to our knowledge, most of its explicit presentations have been limited to two or three dimensions. For example, Arnold et al.~\cite{arnold2008finite,arnold2014nonconforming} used FEEC to construct basis functions on three-dimensional tetrahedra for elasticity applications. Similar work has recently been undertaken by Chen and Huang~\cite{chen2022finite}. In addition, Licht~\cite{licht2022basis} has used FEEC to develop explicit basis functions in terms of barycentric coordinates for simplexes in two and three dimensions. Other efforts to simplify the implementation of FEEC-inspired simplex elements have been carried out by Kirby~\cite{kirby2014low} and Rognes et al.~\cite{rognes2010efficient}. Now, turning our attention to non-simplicial finite elements,  Arnold and Awanou~\cite{arnold2011serendipity} developed high-order basis functions on arbitrary-dimensional hypercubes, (although the explicit formulations were given on quadrilaterals and cubes). This work was followed up by additional papers on quadrilateral elements~\cite{arnold2005rectangular,arnold2015mixed}, and more generally on $d$-dimensional hypercubes~\cite{arnold2014finite,arnold2015finite}. Again, in the latter case, explicit representations are primarily provided for the two- and three-dimensional cases. Nigam and Phillips~\cite{nigam2012high,nigam2012numerical} used FEEC to construct high-order basis functions on three-dimensional square pyramids. Subsequently, Gillette extended this work to form Serendipity finite element spaces on square pyramids~\cite{gillette2016serendipity}. In addition, Natale~\cite{natale2017structure} and McRae et al.~\cite{mcrae2016automated} used FEEC to develop high-order basis functions on three-dimensional triangular prism elements. Thereafter, these elements were used to solve problems in the area of geophysics~\cite{natale2016compatible}. Finally, Gillette et al.~\cite{gillette2016construction} used FEEC and generalized barycentric coordinates to develop a set of basis functions on convex polygonal elements in two dimensions, and polyhedral elements in three dimensions.

One notable exception to the lower-dimensional efforts above, is the work of Golpalakrishnan et al.~\cite{gopalakrishnan2018auxiliary}. Broadly speaking, they used FEEC to construct auxiliary preconditioners for partial differential equations in four-dimensional space-time. More precisely, their work leveraged the following key observation: the solution to each differential equation can be decomposed into a highly-regular H1-conforming component, and a less-regular scalar or vector potential. This observation is clearly correct at the continuous level, but requires significant effort to fully realize at the discrete level. With this in mind, Gopalakrishnan et al.~used FEEC to construct four-dimensional Sobolev spaces, exact sequences, and discrete projection operators. The resulting machinery enabled the construction of a discrete analog of the continuous decomposition, and thereafter, the discrete decomposition was employed to construct auxiliary preconditioners. The preconditioners were successfully applied to low-order, simplex elements in $\mathbb{R}^{4}$. To the best of our knowledge, Gopalakrishnan et al.~were the first to explicitly articulate FEEC in four dimensions. A key motivation of our work is to expand that of Gopalakrishnan et al., and provide explicit, high-order basis functions on both simplicial and non-simplicial elements in four dimensions. We note that the de Rham sequence we consider is slightly different from that in \cite{gopalakrishnan2018auxiliary}; several examples involving PDEs (below) reveal why our particular choice is especially relevant for practical applications.

The main contributions of this work include:
\begin{enumerate}
\item A discussion of important scientific models motivating the need for a fully four-dimensional finite element theory. These additionally serve as motivation for the de Rham sequence under consideration (as mentioned previously).
\item An introduction to Sobolev spaces of $s$-forms, an overview of our preferred de Rahm sequence (itself), and a discussion of the link between isotropic and anisotropic Sobolev spaces.
\item A discussion of the traces associated with our particular de Rham sequence. To the best of our knowledge, this is the first time these traces have been discussed explicitly in the finite element literature for 1-, 2-, and 3-forms in $\mathbb{R}^4$. 

\item A review of tensorial high-order families of finite elements for tesseracts, and an explicit enumeration of basis functions.
\item A review of high-order families of finite elements for pentatopes, and an explicit enumeration of basis functions. This contribution is presented in part~II.
\item A description of high-order families of finite elements on tetrahedral prisms, and an explicit enumeration of basis functions. To the best of our knowledge, this is the first such construction in the literature. This contribution is presented in part~II.
\item An enumeration of easy-to-use degrees of freedom for each of the finite element spaces constructed above, which lead to globally conforming, unisolvent families. These (under assumptions of smoothness) satisfy the commuting diagram property.
\end{enumerate}

Before proceeding further, we note that FEEC's remarkable mathematical elegance and flexibility comes at the cost of using differential forms, which may not be intuitive for many scientists and engineers. For completeness and ease of exposition, we will provide a short introduction to four-dimensional differential forms in what follows. In addition, we will introduce `proxies' which serve as intuitive representations of these differential forms, via the language of linear algebra, (i.e.~vectors and matrices). Thereafter, we will provide two examples of practical applications which leverage these differential forms and their associated proxies. Lastly, we will provide an overview of the remainder of the paper.

\subsection{Differential Forms}

We begin by introducing the following generic set of differential forms
\begin{align*}
    \text{0-forms}, \qquad \omega \in \Lambda^{0}(\Omega), \qquad \omega &= \omega, \\[1.0ex]
    \text{1-forms}, \qquad \omega \in \Lambda^{1}(\Omega), \qquad \omega &= \omega_{1} dx^{1} + \omega_{2} dx^{2} + \omega_{3} dx^{3} + \omega_{4} dx^{4}, \\[1.0ex]
    \text{2-forms} , \qquad \omega \in \Lambda^{2}(\Omega), \qquad \omega &= \omega_{12} dx^{1} \wedge dx^{2} +\omega_{13} dx^{1} \wedge dx^{3} + \omega_{14} dx^{1} \wedge dx^{4} \\
    & + \omega_{23} dx^{2} \wedge dx^{3} + \omega_{24} dx^{2} \wedge dx^{4} + \omega_{34} dx^{3} \wedge dx^{4}, \\[1.0ex]
    \text{3-forms}, \qquad \omega \in \Lambda^{3}(\Omega), \qquad \omega &= \omega_{123} dx^{1} \wedge dx^{2} \wedge dx^{3} + \omega_{124} dx^{1} \wedge dx^{2} \wedge dx^{4} \\
    &+ \omega_{134} dx^{1} \wedge dx^{3} \wedge dx^{4} + \omega_{234} dx^{2} \wedge dx^{3} \wedge dx^{4}, \\[1.0ex]
    \text{4-forms}, \qquad \omega \in \Lambda^{4}(\Omega), \qquad \omega &= \omega_{1234} dx^{1} \wedge dx^{2} \wedge dx^{3} \wedge dx^{4}.
\end{align*}
Here, $\Lambda^{s}(\Omega)$ is the space of $s$-forms on the contractible region $\Omega$, where $s = 0, 1,2,3,4$. 

In a natural fashion, each differential form has a proxy which is obtained by applying a conversion operator (denoted by $\Upsilon_k$, $k = 0, 1, 2, 3, 4$) to each as follows
\begin{align*}
    \Upsilon_{0} \omega &= \omega, \qquad  \Upsilon_{4} \omega  = \omega_{1234}, \\[1.0ex]
    \Upsilon_{1} \omega &= \begin{bmatrix} 
    \omega_1 \\[1.0ex] \omega_2 \\[1.0ex] \omega_3 \\[1.0ex] \omega_4 \end{bmatrix}, \quad 
    \Upsilon_{2} \omega = \frac{1}{2} \begin{bmatrix}
    0 & \omega_{12} & \omega_{13} & \omega_{14} \\[1.0ex]
    -\omega_{12} & 0 & \omega_{23} & \omega_{24} \\[1.0ex]
    -\omega_{13} & -\omega_{23} & 0 & \omega_{34} \\[1.0ex]
    -\omega_{14} & -\omega_{24} & -\omega_{34} & 0
    \end{bmatrix}, \quad
    \Upsilon_{3} \omega = \begin{bmatrix} \omega_{234} \\[1.0ex] -\omega_{134} \\[1.0ex] \omega_{124} \\[1.0ex] -\omega_{123} \end{bmatrix}.
\end{align*}
Here, we observe that $\Upsilon_{2} \omega$ belongs to the space of $4\times4$ skew-symmetric matrices denoted by $\mathbb{K}$. These proxies were previously defined in~\cite{gopalakrishnan2018auxiliary}, with the exception of the 2-form proxy ($\Upsilon_{2} \omega$) which is defined for the first time above. 

\subsection{The Practical Significance of 1-forms}

The field of relativistic fluid dynamics provides a convenient example of the practical application of a $1$-form. Suppose that we consider a relativistic fluid equipped with a spatial velocity vector $u = \left(u_x, u_y, u_z\right)$. Then, we can define a 1-form that represents the 4-velocity
\begin{align*}
    \omega = \gamma \left( c dx^{1} + u_x dx^{2} + u_y dx^{3} + u_z dx^{4} \right),
\end{align*}
where $c$ is the speed of light, and $\gamma$ is the Lorentz factor
\begin{align*}
    \gamma = \frac{1}{\sqrt{1 - \frac{\left\| u \right\|^{2}}{c^2}} }.
\end{align*}
Here, we assume that the coordinates $\left(x_1,x_2,x_3,x_4\right)$ correspond to $\left(t,x,y,z\right)$, and that
\begin{align*}
    \left\| u \right\|^{2} \equiv u_{1}^{2} + u_{2}^{2} + u_{3}^{2}. 
\end{align*}
Furthermore, we set $U = \Upsilon_{1} \omega$ as the 1-form proxy of the 4-velocity, where
\begin{align*}
    \left\| U \right\|^{2} = \pm c^2 = g_{ij} U^{j} U^{i}.
\end{align*}
Here, $g_{ij}$ is a space-time metric tensor, and the choice of plus or minus (above) depends on the choice of metric tensor, (see section 14.5 of~\cite{kleppner2014introduction}). 
Next, we can construct $V$, the 2-form proxy of the 4-vorticity as follows
\begin{align*}
    V & = \Upsilon_{2} \psi = 2 \, \text{skwGrad} \left( \eta U \right),
\end{align*}
where skwGrad is a new, four-dimensional derivative operator that is defined in section~\ref{sobolev_sec}, $\eta$ is the chemical potential, and
\begin{align*}
    \psi &= V_{12} dx^{1} \wedge dx^{2} + V_{13} dx^{1} \wedge dx^{3} + V_{14} dx^{1} \wedge dx^{4} \\
    &+ V_{34} dx^{3} \wedge dx^{4} + V_{24} dx^{2} \wedge dx^{4} + V_{23} dx^{2} \wedge dx^{3},
\end{align*}
is the 2-form of the 4-vorticity. In addition, it can be shown that the following matrix-vector product vanishes for the isentropic case
\begin{align*}
     V U = 0.
\end{align*}
In this way, the 4-velocity acts as a zero eigenvector for the 4-vorticity. We refer the interested reader to~\cite{andersson2021relativistic}, section 5 for more details.

\subsection{The Practical Significance of 2-forms and 3-forms} \label{emag}

The field of electromagnetics provides a nice example of the practical significance of 2-forms and 3-forms. In particular, one may construct a 2-form which contains components of the electric field $E = \left(E_x, E_y, E_z\right)$ and the magnetic field $B = \left(B_x, B_y, B_z\right)$ as follows
\begin{align*}
    \omega &= -c( B_x dx^{1} \wedge dx^{2} + B_y dx^{1} \wedge dx^{3} + B_z dx^{1} \wedge dx^{4}) \\ 
    &- E_x dx^{3} \wedge dx^{4} + E_y dx^{2} \wedge dx^{4} - E_z dx^{2} \wedge dx^{3},
\end{align*}
where $c$ is the speed of light, and the generic coordinates $\left(x_1,x_2,x_3,x_4\right)$ correspond to the well-known temporal and spatial coordinates $\left(t,x,y,z\right)$. The 2-form above is called the Maxwell 2-form. We can also introduce the well-known Faraday 2-form
\begin{align*}
   \varphi &=B_x dx^{3} \wedge dx^{4} - B_y dx^{2} \wedge dx^{4} + B_z dx^{2} \wedge dx^{3} \\
    &-c( E_x dx^{1} \wedge dx^{2}  + E_y dx^{1} \wedge dx^{3} + E_z dx^{1} \wedge dx^{4} ),  
\end{align*}
see section 6.5 of~\cite{hubbard2015vector} for details. 

In addition, one may construct a 3-form which contains components of the electric current density $J = \left(j_x, j_y, j_z\right)$ as follows
\begin{align*}
    \sigma &= -\rho dx^{2} \wedge dx^{3} \wedge dx^{4} + j_x dx^{1} \wedge dx^{3} \wedge dx^{4} \\
    &-j_y dx^{1} \wedge dx^{2} \wedge dx^{4} + j_z dx^{1} \wedge dx^{2} \wedge dx^{3},
\end{align*}
where $\rho$ is the electric charge density. We can then define the associated proxies
\begin{align*}
    F &= \Upsilon_2 \omega, \quad G = \Upsilon_3 \sigma,\quad H = \Upsilon_2 \varphi. 
\end{align*}
Using these proxies, it turns out that the \emph{inhomogeneous Maxwell's equations} can be explicitly formulated as follows
\begin{align*}
    \text{curl} \left( F \right) = 4 \pi G, \qquad \text{curl} \left(H\right) = 0,
\end{align*}
where $\mathrm{curl}$ is the four-dimensional version of the curl operator, (cf.~section~\ref{sobolev_sec}). We note that the form proxies are given by
\begin{align*}
    F = \Upsilon_2 \omega &= \frac{1}{2}\begin{bmatrix}
    0 & -c B_x & -c B_y & -c B_z \\[1.0ex]
    c B_x & 0 & -E_z & E_y \\[1.0ex]
    c B_y & E_z & 0 & -E_x \\[1.0ex]
    c B_z & -E_y & E_x & 0
    \end{bmatrix}, \qquad G =\Upsilon_3 \sigma = -\begin{bmatrix} \rho \\[1.0ex] j_x \\[1.0ex] j_y \\[1.0ex] j_z \end{bmatrix}, \\[1.0ex]
    H =\Upsilon_2 \varphi &= \frac{1}{2} \begin{bmatrix}
        0 & -c E_x & -c E_y & -c E_z \\[1.0ex]
        c E_x & 0 & B_z & -B_y \\[1.0ex]
        c E_y & -B_z & 0 & B_x \\[1.0ex]
        c E_z & B_y & -B_x & 0
    \end{bmatrix}.
\end{align*}
We can also show that the following equation holds 
\begin{align*}
    \text{div} \left( G \right) = 0,
\end{align*}
where $\mathrm{div}$ is the four-dimensional version of the divergence operator, (cf.~section~\ref{sobolev_sec}). This latter equation enforces the conservation of the 4-current. For more details on the physical formulation above, please consult section 6.12 of~\cite{hubbard2015vector}.

\subsection{Overview of the Paper}

Our outline for the present paper is as follows. In section~2, we introduce a reference element, a generic mapping operator for the tesseract, and several guiding principles for constructing finite element spaces on the tesseract (and elsewhere). In section~3, we define four-dimensional Sobolev spaces and the associated Piola-type transformations (pullback operations) that enable mappings of scalars, vectors, and matrices between the reference element and physical elements. In section 4, we explicitly state the high-order finite element spaces for the  tesseract. In section 5, we prove that these spaces satisfy  interpolation and commuting diagram properties. Finally, in section~6, we summarize our presentation with some concluding remarks.

\section{Notation, Preliminaries, and Guiding Principles}

We begin this section by introducing the reference tesseract which will be used extensively throughout the paper.  We next recall some well-known degrees of freedom on hexahedra, which will be used extensively in our constructions. We end by discussing some guiding principles which will be used in our development of explicit, high-order, conforming families of finite element functions, which form exact sequences.

\subsection{The Reference Element}
Consider the following definition of a reference tesseract
\begin{align*}
    \widehat{K} := \C{4} := \left\{ x = \left(x_1, x_2, x_3, x_4 \right) \in \mathbb{R}^4 \big| -1 \leq x_1, x_2, x_3, x_4 \leq 1 \right\},
\end{align*}
with vertices
\begin{align*}
   v_{1} &= [
         -1 ,
         -1 ,
         -1 ,
         -1]^{T}, \quad 
    v_{2} = [
         1 ,
         -1 ,
         -1 ,
         -1
    ]^{T}, \quad 
    v_{3} = [
         1 ,
         1 ,
         -1 ,
         -1
    ]^{T}, \quad 
    v_{4} = [
         -1 ,
         1 ,
         -1 ,
         -1
    ]^{T}, \\[1.0ex]
    v_{5} &= [
         -1 ,
         -1 ,
         1 ,
         -1
    ]^{T}, \quad 
    v_{6} = [
         1 ,
         -1 ,
         1 ,
         -1
    ]^{T}, \quad 
    v_{7} = [
         1 ,
         1 ,
         1 ,
         -1
    ]^{T}, \quad 
    v_{8} = [
         -1 ,
         1 ,
         1 ,
         -1
    ]^{T}, \\[1.0ex]
    v_{9} &= [
         -1 ,
         -1 ,
         -1 ,
         1
    ]^{T}, \quad 
    v_{10} = [
         1 ,
         -1 ,
         -1 ,
         1
    ]^{T}, \quad 
    v_{11} = [
         1 ,
         1 ,
         -1 ,
         1
    ]^{T}, \quad 
    v_{12} = [
         -1 ,
         1 ,
         -1 ,
         1
    ]^{T}, \\[1.0ex]
    v_{13} &= [
         -1 ,
         -1 ,
         1 ,
         1
    ]^{T}, \quad 
    v_{14} = [
         1 ,
         -1 ,
         1 ,
         1
    ]^{T}, \quad 
    v_{15} = [
         1 ,
         1 ,
         1 ,
         1
    ]^{T}, \quad 
    v_{16} = [
         -1 ,
         1 ,
         1 ,
         1
    ]^{T}.
\end{align*}
Next, we introduce the definition of an arbitrary tesseract $K$ with vertices $v_{1}', v_{2}', \ldots, v_{16}'$.
There exists a bijective mapping between the reference tesseract and the arbitrary tesseract $\phi: \widehat{K} \rightarrow K$, such that
\begin{align*}
    \phi \left(x_1, x_2, x_3, x_4 \right) = \sum_{i=1}^{16} v_{i}' N_{i}\left(x_1, x_2, x_3, x_4 \right),
\end{align*}
where 
\begin{align*}
    N_1 &= \frac{1}{16} \left(1-x_{1}\right)\left(1-x_{2}\right)\left(1-x_{3}\right)\left(1-x_{4}\right), \quad     N_2 = \frac{1}{16} \left(1+x_{1}\right)\left(1-x_{2}\right)\left(1-x_{3}\right)\left(1-x_{4}\right), \\
    N_3 &= \frac{1}{16} \left(1+x_{1}\right)\left(1+x_{2}\right)\left(1-x_{3}\right)\left(1-x_{4}\right), \quad     N_4 = \frac{1}{16} \left(1-x_{1}\right)\left(1+x_{2}\right)\left(1-x_{3}\right)\left(1-x_{4}\right), \\
    N_5 &= \frac{1}{16} \left(1-x_{1}\right)\left(1-x_{2}\right)\left(1+x_{3}\right)\left(1-x_{4}\right), \quad     N_6 = \frac{1}{16} \left(1+x_{1}\right)\left(1-x_{2}\right)\left(1+x_{3}\right)\left(1-x_{4}\right), \\
    N_7 &= \frac{1}{16} \left(1+x_{1}\right)\left(1+x_{2}\right)\left(1+x_{3}\right)\left(1-x_{4}\right), \quad     N_8 = \frac{1}{16} \left(1-x_{1}\right)\left(1+x_{2}\right)\left(1+x_{3}\right)\left(1-x_{4}\right), \\
    N_9 &= \frac{1}{16} \left(1-x_{1}\right)\left(1-x_{2}\right)\left(1-x_{3}\right)\left(1+x_{4}\right), \quad     N_{10} = \frac{1}{16} \left(1+x_{1}\right)\left(1-x_{2}\right)\left(1-x_{3}\right)\left(1+x_{4}\right), \\
    N_{11} &= \frac{1}{16} \left(1+x_{1}\right)\left(1+x_{2}\right)\left(1-x_{3}\right)\left(1+x_{4}\right), \quad     N_{12} = \frac{1}{16} \left(1-x_{1}\right)\left(1+x_{2}\right)\left(1-x_{3}\right)\left(1+x_{4}\right), \\
    N_{13} &= \frac{1}{16} \left(1-x_{1}\right)\left(1-x_{2}\right)\left(1+x_{3}\right)\left(1+x_{4}\right), \quad     N_{14} = \frac{1}{16} \left(1+x_{1}\right)\left(1-x_{2}\right)\left(1+x_{3}\right)\left(1+x_{4}\right), \\
    N_{15} &= \frac{1}{16} \left(1+x_{1}\right)\left(1+x_{2}\right)\left(1+x_{3}\right)\left(1+x_{4}\right), \quad     N_{16} = \frac{1}{16} \left(1-x_{1}\right)\left(1+x_{2}\right)\left(1+x_{3}\right)\left(1+x_{4}\right).
\end{align*}
Figure~\ref{fig:tesseract} illustrates a generic tesseract.
\begin{figure}[h!]
    \centering
    \includegraphics[width=10cm]{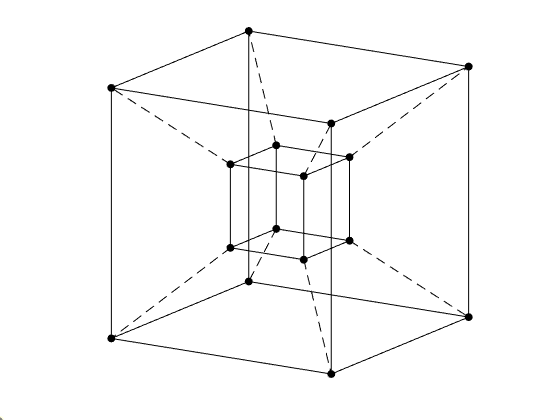}
    \caption{Illustration of a generic tesseract.}
    \label{fig:tesseract}
\end{figure}
We summarize important geometric information associated with the tesseract in \autoref{Table:4DelementsSubs}. Here, the $d$-dimensional reference cube is denoted by $\C{d}$.
\begin{table}[h!]
    \centering
    \begin{tabular}{c|c}\hline
   & Tesseract \\
    & $\C{4}$\\ \hline
    Vertices&16\\ \hline
    Edges&32\\ \hline
    Quadrilateral faces $\C{2}$&24\\ \hline
    Hexahedral facets $\C{3}$&8 
    \end{tabular}
    \caption{Geometric information for the reference tesseract.}
    \label{Table:4DelementsSubs}
\end{table}

\subsection{Notation}  We will denote by $\Vk{k}{s}(\Omega)$ the space of $k$-th order polynomial shape functions for the $s$-forms on~$\Omega$. $\Sk{k}{s}(\Omega)$ denotes the {\it degrees of freedom (dofs),} a collection of linear functionals on $\Vk{k}{s}(\Omega)$ which is dual to this polynomial space. In this paper, we will be presenting explicit descriptions of finite element triples
\begin{align*}
\left(\widehat{K},\Vk{k}{s}(\widehat{K}), \Sk{k}{s}(\widehat{K})\right),
\end{align*}
for reference element $\widehat{K}= \C{4}$. We suppress the dependence on $\widehat{K}$ when the context is clear.

We recall that $P^k(x_1,x_2,x_3,x_4)$ denotes the space of polynomials of maximal {\it total} degree $k$ in the variables $x_1,x_2,x_3,x_4$. We denote the space of homogeneous polynomials of total degree exactly $k$ in these variables by $\tilde{P}^k$. That is, if we set $x=(x_1,x_2,x_3,x_4)$ then
\begin{align*}
\sum_{|\alpha| \leq k} a_{\alpha} x^\alpha \in P^k(x_1,x_2,x_3,x_4), \qquad  \sum_{|\alpha| =  k} a_{\alpha} x^\alpha \in \tilde{P}^k(x_1,x_2,x_3,x_4),
\end{align*}
where $\alpha$ is the multi-index, and $a_{\alpha}$ are constants. We will suppress the arguments $(x_1,x_2,x_3,x_4)$ when the context is clear. The notation $Q^{l,m,n,q}(x_1,x_2,x_3,x_4)$ denotes standard tensorial polynomials of maximal degree $l,m,n,q$, that is,
\begin{align*}
 Q^{l,m,n,q}(x_1,x_2,x_3,x_4)=P^l(x_1)P^m(x_2)P^n(x_3)P^q(x_4).
\end{align*}

Next, consider a bijective map from 6-vectors to skew-symmetric matrices  in $\mathbb{K}$:
\begin{align}\label{eq:Ldefine}
\VtoM{\cdot}:\mathbb{R}^6 \rightarrow \mathbb{K} : \VtoM{\begin{bmatrix}
	w_{12}\\
	w_{13}\\
	w_{14}\\
	w_{23}\\
	w_{24}\\
	w_{34}
	\end{bmatrix}}:= \begin{bmatrix}
0&w_{12}&w_{13}&w_{14}\\
-w_{12}&0&w_{23}&w_{24}\\
-w_{13}&-w_{23}&0&w_{34}\\
-w_{14}&-w_{24}&-w_{34}&0
\end{bmatrix}.
\end{align}
Finally, we introduce the following pair of operators that denote the trace of a quantity $u$ on to a $n$-dimensional submanifold $f$:
\begin{align*}
    \tr{f}{u}, \qquad \Tr{f}{u}.
\end{align*}
The argument $[f]$ is omitted when the submanifold in question is obvious. The first trace operator $\tr{f}{u}$ denotes the well-defined restriction of $u$ on to $f$, where the restriction is a scalar, $4$-vector, or $4 \times 4$ matrix. In a similar fashion, the second trace operator $\Tr{f}{u}$ denotes the well-defined restriction of $u$ on to $f$, where the restriction is a scalar, $n$-vector, or $n \times n$ matrix. Evidently, these operators are identical when $n = 4$. In addition, there is (at least) a surjective map between elements in the ranges of the operators
\begin{align*}
    \Xi: \mathrm{tr}[f](u) \longrightarrow \mathrm{Tr}[f](u).
\end{align*}
In other words, information from $\mathrm{tr}[f](u)$, which is a scalar, 4-vector, or $4\times 4$ matrix can always be identified with $\mathrm{Tr}[f](u)$, which is a scalar, $n$-vector, or $n \times n$ matrix.

\subsection{Degrees of Freedom}
In later sections of this paper, our construction of finite element triples will rely heavily on the use of well-known dofs on hexahedral {\it facets}, and their associated faces, edges, and vertices. For the sake of completeness, we record these dofs below. 
 
\subsubsection{Vertex Degrees of Freedom}
Vertex degrees of freedom are only defined for 0-forms. For the tesseract, we specify the vertex degrees of freedom $\Sigma^{k,(0)}(v)$ as the vertex values of the polynomial 0-form. We note that there are 16 such degrees of freedom for the tesseract $\C{4}$.

\subsubsection{Edge Degrees of Freedom}
Edge degrees of freedom are only defined for 0- and 1-forms. Let $e$ be an edge of an element $\widehat{K}$, where $\widehat{K}$ = $\C{4}$, and let $u \in \Vk{k}{0}(\widehat{K})$ be a 0-form proxy. We define edge degrees of freedom for $u$ as follows
\begin{equation}\label{eq:edge-0}
M_{e}(u) :=\left\{ \int_{e} \text{Tr}(u) q, \, \qquad \forall q \in P^{k-2}(e), \qquad  \mbox{ for each edge } \, e \, \mbox{ of }\, \widehat{K} \right\}.
\end{equation}
Generally, for tesseracts $\C{4}$ there are $32(k-1)$ such degrees of freedom.

Now, let $U \in \Vk{k}{1}(\widehat{K})$ be a 1-form proxy. We define its edge degrees of freedom as
\begin{equation} \label{eq:edge-1}
M_{e}(U) :=\left\{ \int_{e} \mathrm{Tr} (U)\cdot \tau q, \, \qquad \forall q \in P^{k-1}(e), \qquad  \mbox{ for each edge } \, e \, \mbox{ of }\, \widehat{K} \right\},
\end{equation} 
where $\tau$ is a unit vector in the direction of $e$.
Generally, for $\C{4}$ there are $32k$ such degrees of freedom.

\subsubsection{Face Degrees of Freedom}
Face degrees of freedom are only defined for 0-, 1-, and 2-forms. Let $f$ denote a single face of an element $\widehat{K}$. This face will be a quadrilateral since $\widehat{K} = \C{4}$.

{\bf Quadrilateral faces:}
For polynomial 0-forms $u\in \Vk{k}{0}(\widehat{K})$, face degrees of freedom on a quadrilateral face $f = \C{2}$ are defined as
\begin{equation}\label{eq:square-0}
M_f(u):=\left\{ \int_f \text{Tr}(u)q, \quad \forall q \in Q^{k-2,k-2}(f) \right\}.
\end{equation}
Face degrees of freedom for polynomial 1-forms, $U \in \Vk{k}{1}(\widehat{K})$, are given by
\begin{equation}\label{eq:square-1}
M_f(U):=\left\{ \int_f (\mathrm{Tr}(U)\times \nu) \cdot q, \quad \forall q \in Q^{k-2,k-1}(f)\times Q^{k-1,k-2}(f) \right\}.
\end{equation}
Here, $\nu$ denotes a unit normal vector to the face $f$.
Lastly, face degrees of freedom for polynomial 2-forms, $U \in \Vk{k}{2}(\widehat{K})$, can be specified by
\begin{equation}\label{eq:square-2}
M_f(U):=\left\{ \int_f (\mathrm{Tr}(U)\cdot \nu) q, \quad \forall q \in Q^{k-1,k-1}(f) \right\}.
\end{equation}

\subsubsection{Facet Degrees of Freedom}
Facet degrees of freedom are defined for 0-, 1-, 2-, and 3-forms. The tesseract $\C{4}$ has hexahedral facets. 

{\bf Hexahedral facets:}
Let $\mathcal{F}=\C{3}$ denote a hexahedral facet.
For polynomial 0-forms $u \in \Vk{k}{0}(\widehat{K})$, we can specify facet degrees of freedom as
\begin{equation} M_{\mathcal{F}}(u) := \left\{ \int_{\mathcal{F}} \mathrm{Tr}(u) q, \qquad  q \in Q^{k-2,k-2,k-2}(\mathcal{F})\right\}.\label{eq:hex0} \end{equation}
For polynomial 1-forms $U \in \Vk{k}{1}(\widehat{K})$, we specify the facet degrees of freedom as
\begin{equation}\label{eq:hex1}
M_{\mathcal{F}}(U) := \left\{ \int_{\mathcal{F}} \mathrm{Tr}(U) \cdot  q, \qquad  q \in \left(Q^{k-1,k-2,k-2}\times Q^{k-2,k-1,k-2}\times Q^{k-2,k-2,k-1}\right)(\mathcal{F})\right\}.
\end{equation}
For polynomial 2-forms $U \in \Vk{k}{2}(\widehat{K})$, we specify the facet degrees of freedom as
\begin{equation}\label{eq:hex2}
M_{\mathcal{F}}(U) := \left\{ \int_{\mathcal{F}} \mathrm{Tr}(U) \cdot  q, \qquad  q \in \left(Q^{k-2,k-1,k-1}\times Q^{k-1,k-2,k-1}\times Q^{k-1,k-1,k-2}\right)(\mathcal{F})\right\}.
\end{equation}
Lastly, for polynomial 3-forms $U \in \Vk{k}{3}(\widehat{K})$, we specify the facet degrees of freedom as
\begin{equation}\label{eq:hex3}
M_{\mathcal{F}}(U) := \left\{ \int_{\mathcal{F}} \text{Tr}(U)  q, \qquad  q \in Q^{k-1,k-1,k-1}(\mathcal{F})\right\}.
\end{equation}

\subsection{Guiding Principles}

The design principles we shall follow in the process of constructing the polynomial spaces $\Vk{k}{s}(\K)$  are:
\begin{itemize}
	\item {\it Approximation.} Given a positive integer $p$, it is possible to choose $k\in \mathbb{N}$ so that all polynomial $s$-forms of degree $p$ are contained in $\Vk{k}{s}(\K).$
	\item {\it Compatibility.} The restriction of a finite element function in $\Vk{k}{s}(\Omega)$ to the facets $\mathcal{F}$ of $\Omega$ should match the traces of functions on neighbouring physical elements. 
	\item {\it Stability.} We require that the finite element spaces satisfy a suitable {\it commuting diagram} property.
\end{itemize} 
As we mentioned previously, $\Sk{k}{s}(\K)$ denotes the space of degrees of freedom. We insist that the dofs satisfy the follow properties: {\it unisolvence, invariance under `canonical' transformations, and locality}, (see~\cite{hiptmair1999canonical}). Unisolvency will follow by showing that if all the degrees of freedom  $\Sk{k}{s}(\K)$ vanish for some $u\in \Vk{k}{s}(\K)$, then $u\equiv 0$. Invariance means that, if $\K$ is mapped via an affine transformation to $\K'$, the degrees of freedom transform in the expected manner. Finally, locality simply means that if $\mathcal{F} \subset \K$ is a facet of $\K$, then the trace of $u\in \Vk{k}{s}(\K)$ on to $\mathcal{F}$ is specified by some degrees of freedom whose values only depend on this facet.  
Locality is a key component for {\it conformity:} degrees of freedom which possess trace values will be shared with neighbouring physical elements, and must be local to the shared submanifold.

With this in mind, suppose that a subdomain $f$  (vertex, edge, or face) is a subset of a facet $\mathcal{F}$, i.e.~$f \subset \mathcal{F}$ of $\K$. Let $u$ be a polynomial $s$-form.  Then computing the trace of $u$ on $f$, $\text{Tr}[f](u)$, is equivalent to computing the trace on to $f$, $\text{Tr}[f](\cdot)$ of the trace of $u$ on $\mathcal{F}$, $\text{Tr}[\mathcal{F}](u)$. Equivalently, 
\begin{align*}
\text{Tr}[f](u) = \text{Tr}[f]\left( \text{Tr}[\mathcal{F}](u)\right), 
\end{align*}
for $u\in \Vk{k}{s}(\K)$, provided such traces are defined. Therefore, given a description of degrees of freedom corresponding to $\Vk{k}{s}(\mathcal{F})$ on a facet, we do not need to provide any additional degrees of freedom for the associated vertices, edges, and faces -- these will be specified by well-known, lower-dimensional dofs. This allows us to decompose dofs on $\K$ in terms of {\it trace} dofs and {\it volumetric} dofs as follows
\begin{align*}
	\Sk{k}{s}(\K) = \Sk{k}{s}_{trace}(\K) \cup \Sk{k}{s}_{vol}(\K),
\end{align*}
where
\begin{align*}
	\Sk{k}{s}_{trace}(\K):= \bigcup_{r=s}^{3} \left(  \bigcup_{ \mathcal{F} \subset D_r} \Sk{k}{s}(\mathcal{F})  \right), 
\end{align*}
and where  $D_r$ denotes an $r$-dimensional submanifold of the boundary of $\K$.
Provided we pick facet degrees of freedom to ensure that they are unisolvent for facet polynomial $s$-forms in $\Vk{k}{s}(\mathcal{F})$, and provided
\begin{equation}
\text{Tr}[\mathcal{F}](\Vk{k}{s}(\K)) = \Vk{k}{s}(\mathcal{F}),
\end{equation}
we will guarantee conformity of our finite element triples. Note that we have unisolvence on a facet under the following circumstances: if all the facet degrees of freedom $\Sk{k}{s}(\mathcal{F}) =0$ for some $u \in \Vk{k}{s}(\K)$, then $\text{Tr}[\mathcal{F}](u)=0$. 

There are several different approaches for constructing volume degrees of freedom. Suppose $u \in \Vkcirc{k}{s}(\K)$ is a zero-trace polynomial $s$-form which belongs to the bubble space on an element $\K$. This form is uniquely characterized by specifying one of the following: 
\begin{itemize}
	\item Wedge products of the form
	\begin{equation*}
	    \left\{u \rightarrow \int_{\mathcal{F}} \mathrm{Tr}[\mathcal{F}](u)  \wedge q, \qquad \mbox{for polynomial} \,\, (4-s)\mbox{-forms}, q \right\}.
	\end{equation*}
	This approach is described in general for $d$-simplex elements in~\cite{arnold2006finite} and for $d$-cubes in~\cite{arnold2014finite}. 
	
	\item Projection-based degrees of freedom following the approach of \cite{monkbook, Demkowicz10}.
	
	\item Integration against certain polynomials. These polynomials are defined using explicit representations of functions in the bubble spaces. For example, if members $w\in \Vkcirc{k}{s}(\K)$ can be expressed in terms of products of the form $\psi_i q(x_1,x_2,x_3,x_4)$ where $\psi_i$ vanish on (parts of) the boundary, then degrees of freedom can be specified in terms of the polynomial objects~$q$.
	\item A discrete Helmholtz-like decomposition of $s$-form bubbles.
\end{itemize} 
Other approaches are also possible, although the authors are unaware of any which have the same popularity as those listed above. In the subsequent sections, we will make primary use of the third approach listed above due to its simplicity.

\section{Sobolev Spaces and Associated Mappings}\label{sobolev_sec}

In this section, we provide an explicit description of Sobolev spaces in four dimensions and their associated mapping operations. This discussion is not exhaustive, as an infinite number of such Sobolev spaces are possible to construct, but is merely intended to provide enough background to facilitate the development of conforming finite element spaces in subsequent sections. With this in mind, we can begin by constructing a set of first-derivative operators
\begin{align*}
    \text{grad}, \; \text{skwGrad}, \; \text{curl}, \; \text{div}.
\end{align*}
These derivative operators are sufficient for the purposes of defining a complete chain or cochain of operators. It is also useful to define the following `auxiliary' operators
\begin{align*}
    \text{Curl}, \; \text{Div},
\end{align*}
where `$\text{Curl}$' is similar in nature to `$\text{skwGrad}$', and `$\text{Div}$' is similar in nature to `$\text{curl}$'. The auxiliary operators are necessary for the formulation of trace operators and a dual chain complex.

In a natural fashion, the derivative operators above can be used to construct Sobolev spaces
\begin{align*}
    H\left( \text{grad}, \Omega, \mathbb{R} \right) &= \left\{u \in L^{2}\left(\Omega, \mathbb{R} \right) : \text{grad} \, u \in L^{2} \left(\Omega,\mathbb{R}^{4} \right) \right\} =:\mathcal{H}^0(\Omega), \\
    H\left( \text{skwGrad}, \Omega, \mathbb{R}^{4} \right) &= \left\{E \in L^{2}\left(\Omega, \mathbb{R}^{4} \right) : \text{skwGrad} \, E \in L^{2} \left(\Omega, \mathbb{K} \right) \right\} =:\mathcal{H}^1(\Omega), \\
    H\left( \text{curl}, \Omega, \mathbb{K} \right) &= \left\{F \in L^{2}\left(\Omega, \mathbb{K} \right) : \text{curl} \, F \in L^{2} \left(\Omega,\mathbb{R}^{4} \right) \right\}=:\mathcal{H}^2(\Omega), \\
    H\left( \text{div}, \Omega, \mathbb{R}^{4} \right) &= \left\{G \in L^{2}\left(\Omega, \mathbb{R}^{4} \right) : \text{div} \, G \in L^{2} \left(\Omega,\mathbb{R} \right) \right\}=:\mathcal{H}^3(\Omega),
\end{align*}
where we recall that $\mathbb{K}$ is a $4 \times 4$ skew-symmetric matrix, and
\begin{align*}
    L^{2}(\Omega,\mathbb{R}) &= \left\{q : \int_{\Omega} q^2 \, dx < \infty  \right\} =: \mathcal{H}^{4}(\Omega). 
\end{align*}
Furthermore 
\begin{align*}
    H\left( \text{Curl}, \Omega, \mathbb{R}^{4} \right) &= \left\{E \in L^{2}\left(\Omega, \mathbb{R}^{4} \right) : \text{Curl} \, E \in L^{2} \left(\Omega, \mathbb{K} \right) \right\}, \\
    H\left( \text{Div}, \Omega, \mathbb{K} \right) &= \left\{F \in L^{2}\left(\Omega, \mathbb{K} \right) : \text{Div} \, F \in L^{2} \left(\Omega,\mathbb{R}^{4} \right) \right\}.
\end{align*} 
The first five spaces above are associated with the standard \emph{de Rahm} complex in four dimensions
\begin{align}\label{eq:ourdehram}
	\begin{matrix}
		C^{\infty}(\Omega, \mathbb{R}) & \arrow{r}{\mathrm{grad}} & C^{\infty}(\Omega, \mathbb{R}^{4}) & \arrow{r}{\mathrm{skwGrad}} & C^{\infty}(\Omega, \mathbb{K}) & \arrow{r}{\mathrm{curl}} & C^{\infty}(\Omega, \mathbb{R}^{4})  & \arrow{r}{\mathrm{div}} & C^{\infty}(\Omega,\mathbb{R}).
	\end{matrix}
\end{align}
This complex is analogous to the standard de Rahm complex in \emph{three dimensions}
\begin{align*}
	\begin{matrix}
		C^{\infty}(\Omega, \mathbb{R}) & \arrow{r}{\nabla} & C^{\infty}(\Omega, \mathbb{R}^{3}) & \arrow{r}{\nabla \times} & C^{\infty}(\Omega, \mathbb{R}^{3}) & \arrow{r}{\nabla \cdot} & C^{\infty}(\Omega, \mathbb{R}),
	\end{matrix}
\end{align*}
where $\nabla$ is the three-dimensional gradient operator, $\nabla \times$ is the three-dimensional curl operator, and $\nabla\cdot$ is the three-dimensional divergence operator.

We can also form the L2 de Rahm complex 
\begin{align*}
	\begin{matrix}
		H(\mathrm{grad}, \Omega, \mathbb{R}) & \arrow{r}{\mathrm{grad}} & H(\mathrm{skwGrad}, \Omega, \mathbb{R}^{4}) & \arrow{r}{\mathrm{skwGrad}} & H(\mathrm{curl}, \Omega, \mathbb{K}) & \arrow{r}{\mathrm{curl}} & H(\mathrm{div},\Omega, \mathbb{R}^{4})  & \arrow{r}{\mathrm{div}} & L^{2}(\Omega,\mathbb{R}).
	\end{matrix}
\end{align*}
This complex is analogous to the L2 de Rahm complex in \emph{three dimensions}
\begin{align*}
	\begin{matrix}
		H(\nabla, \Omega, \mathbb{R}) & \arrow{r}{\nabla} & H(\nabla \times, \Omega, \mathbb{R}^{3}) & \arrow{r}{\nabla \times} & H(\nabla \cdot, \Omega, \mathbb{R}^{3}) & \arrow{r}{\nabla \cdot} & L^{2}(\Omega,\mathbb{R}).
	\end{matrix}
\end{align*}

In the formulas above, we let `$\text{grad}$' denote the four-dimensional gradient operator which can be applied to a scalar, $u \in L^{2}\left(\Omega, \mathbb{R} \right)$, such that $\left[\text{grad} u\right]_{i} = \partial_{i} u$ for $i = 1, \ldots, 4$. Next, we let  `$\text{skwGrad}$' denote an antisymmetric gradient operator which can be applied to a 4-vector, $E \in L^{2}\left(\Omega, \mathbb{R}^{4} \right)$, given by
\begin{align*}
    \left[ \text{skwGrad} \, E \right] = \frac{1}{2} \left( 
    \left[ \text{Grad} \, E\right]^{T} - \left[ \text{Grad} \, E\right] \right),
\end{align*}
where we define $\left[ \text{Grad} \, E\right]_{ij} = \partial_{j} E_{i}$, and
\begin{align*}
   \left[ \text{skwGrad} \, E \right] = \frac{1}{2}
   \begin{bmatrix}
        0 & \partial_{1} E_2 - \partial_{2} E_1 & \partial_{1} E_3 - \partial_{3} E_1 & \partial_{1} E_4 - \partial_{4} E_1 \\[1.0ex]
        \partial_{2} E_1 - \partial_{1} E_2 & 0 & \partial_{2} E_3 - \partial_{3} E_2 & \partial_{2} E_4 - \partial_{4} E_2 \\[1.0ex]
        \partial_{3} E_1 - \partial_{1} E_3 & \partial_{3} E_2 - \partial_{2} E_3 & 0 & \partial_{3} E_4 - \partial_{4} E_3 \\[1.0ex]
        \partial_{4} E_1 - \partial_{1} E_{4} & \partial_{4} E_2 - \partial_{2} E_4 & \partial_{4} E_3 - \partial_{3} E_4 & 0
   \end{bmatrix}.
\end{align*}
In addition, we let `$\text{curl}$' denote a derivative operator which can be applied to a $4 \times 4$ skew-symmetric matrix, $F \in L^{2}\left(\Omega, \mathbb{K} \right)$, given by
\begin{align*}
    \left[\text{curl} \, F \right]_{i} = \sum_{k,l=1}^{4} \varepsilon_{ijkl} \partial_{j} F_{kl},
\end{align*}
where $\varepsilon_{ijkl}$ is the Levi-Civita tensor, and
\begin{align*}
    \left[\text{curl} \, F \right] =
    \begin{bmatrix}
         \partial_{2} \left( F_{34} - F_{43} \right) + \partial_{3} \left(F_{42} - F_{24} \right) + \partial_{4} \left(F_{23} - F_{32} \right) \\[1.0ex]
         \partial_{1} \left(F_{43} - F_{34} \right) + \partial_{3} \left(F_{14} - F_{41} \right) + \partial_{4} \left(F_{31} - F_{13} \right) \\[1.0ex]
         \partial_{1} \left(F_{24} - F_{42} \right) + \partial_{2} \left(F_{41} - F_{14} \right) + \partial_{4} \left(F_{12} - F_{21} \right) \\[1.0ex]
         \partial_{1} \left(F_{32} - F_{23}\right) + \partial_{2} \left(F_{13} - F_{31}\right) + \partial_{3} \left(F_{21} - F_{12} \right)
    \end{bmatrix}.
\end{align*}
Here, the Levi-Civita tensor vanishes when its indices are repeated, and takes on the values of $1$ or $-1$ when the permutations of its indices are even or odd, respectively. Next, we let `$\text{div}$' denote the standard divergence operator which acts on a 4-vector, $G \in L^{2}\left(\Omega, \mathbb{R}^{4} \right)$, such that $\left[\text{div} \, G \right] = \partial_{i} G_{i}$. Furthermore, we let `$\text{Curl}$' denote the auxiliary curl operator which can be applied to a 4-vector, $E \in L^{2}(\Omega, \mathbb{R}^{4})$, such that
\begin{align*}
    \left[\text{Curl} \, E \right]_{ij} = \sum_{k,l=1}^{4} \varepsilon_{ijkl} \partial_{k} E_{l},
\end{align*}
and
\begin{align*}
    \left[\text{Curl} \, E \right] = 
    \begin{bmatrix}
        0 & \partial_{3} E_4 - \partial_{4} E_3 & \partial_{4} E_2 - \partial_{2} E_4 & \partial_{2} E_3 - \partial_{3} E_2 \\[1.0ex]
        \partial_{4} E_3 - \partial_{3} E_4 & 0 & \partial_{1} E_4 - \partial_{4} E_1 & \partial_{3} E_1 - \partial_{1} E_3 \\[1.0ex]
        \partial_{2} E_4 - \partial_{4} E_2 & \partial_{4} E_1 - \partial_{1} E_4 & 0 & \partial_{1} E_2 - \partial_{2} E_1 \\[1.0ex]
        \partial_{3} E_2 - \partial_{2} E_3 & \partial_{1} E_3 - \partial_{3} E_1 & \partial_{2} E_1 - \partial_{1} E_2 & 0
    \end{bmatrix}.
\end{align*}
Finally, we let `$\text{Div}$' denote the auxiliary divergence operator which can be applied to a $4 \times 4$ skew-symmetric matrix, $F \in L^{2}(\Omega, \mathbb{K})$, such that
\begin{align*}
    \left[\text{Div} \, F \right]_{i} = \sum_{j=1}^{4} \partial_{j} F_{ij},
\end{align*}
and
\begin{align*}
    \left[\text{Div} \, F \right] = 
    \begin{bmatrix}
        \partial_{2} F_{12} + \partial_{3} F_{13} + \partial_{4} F_{14} \\[1.0ex] 
        \partial_{1} F_{21} + \partial_{3} F_{23} + \partial_{4} F_{24} \\[1.0ex] 
        \partial_{1} F_{31} + \partial_{2} F_{32} + \partial_{4} F_{34} \\[1.0ex] 
        \partial_{1} F_{41} + \partial_{2} F_{42} + \partial_{3} F_{43}
    \end{bmatrix}.
\end{align*}
For completeness we record here the sequence used in \cite{gopalakrishnan2018auxiliary}: 
\begin{align*}
	\begin{matrix}
		C^{\infty}(\Omega, \mathbb{R}) & \arrow{r}{\mathrm{grad}} & C^{\infty}(\Omega, \mathbb{R}^{4}) & \arrow{r}{\mathrm{Curl}} & C^{\infty}(\Omega, \mathbb{K}) & \arrow{r}{\mathrm{Div}} & C^{\infty}(\Omega, \mathbb{R}^{4})  & \arrow{r}{\mathrm{div}} & C^{\infty}(\Omega,\mathbb{R}).
	\end{matrix}
\end{align*} The key difference between this and Eq.~\eqref{eq:ourdehram} is in the use of differentials acting on 1- and 2-forms. In Eq.~\eqref{eq:ourdehram} we see the operators which arise naturally in the PDE models presented in Section 1; this is an additional motivation for working with this sequence.

It can be shown that the first-derivative operators  satisfy the following identities
\begin{align}
    \Upsilon_{1} \left( d^{\left(0\right)} \omega \right) &= \text{grad} \left( \Upsilon_{0} \omega \right), \qquad \qquad \qquad \; \omega \in \Lambda^{0}(\Omega) := \mathcal{D}'(\Omega,\Lambda^{0}),\\
    \Upsilon_{2} \left( d^{\left(1\right)} \omega \right) &= \text{skwGrad} \left( \Upsilon_{1} \omega \right), \qquad \qquad \omega \in \Lambda^{1}(\Omega) := \mathcal{D}'(\Omega,\Lambda^{1}), \label{derivative_id_one} \\
    \Upsilon_{3} \left( d^{\left(2\right)} \omega \right) &= \text{curl} \left( \Upsilon_{2} \omega \right), \qquad \qquad \qquad \; \omega \in \Lambda^{2}(\Omega) := \mathcal{D}'(\Omega,\Lambda^{2}), \label{derivative_id_two} \\
    \Upsilon_{4} \left( d^{\left(3\right)} \omega \right) &= \text{div} \left( \Upsilon_{3} \omega \right), \qquad \qquad \qquad \; \, \omega \in \Lambda^{3}(\Omega) := \mathcal{D}'(\Omega,\Lambda^{3}).
\end{align}
The identities in Eqs.~\eqref{derivative_id_one} and \eqref{derivative_id_two} are proved in~\ref{derivative_appendix}. The remaining identities were proved previously in~\cite{gopalakrishnan2018auxiliary}. Based on these identities, it is immediately obvious that the following diagram commutes
\begin{align*}
\begin{matrix}
\mathcal{D}'(\Omega,\Lambda^{0}) & \arrow{r}{d^{\left(0\right)}} & \mathcal{D}'(\Omega,\Lambda^{1}) & \arrow{r}{d^{\left(1\right)}} & \mathcal{D}'(\Omega,\Lambda^{2}) & \arrow{r}{d^{\left(2\right)}} & \mathcal{D}'(\Omega,\Lambda^{3})  & \arrow{r}{d^{\left(3\right)}} & \mathcal{D}'(\Omega,\Lambda^{4})                 \cr
\arrow{d}{\Upsilon_0} &                      & \arrow{d}{\Upsilon_1} &   & \arrow{d}{\Upsilon_2} & & \arrow{d}{\Upsilon_3} & & \arrow{d}{\Upsilon_4} \cr
\mathcal{D}'(\Omega, \mathbb{R})                   & \arrow{r}{\text{grad}} & \mathcal{D}'(\Omega, \mathbb{R}^{4}) &  \arrow{r}{\text{skwGrad}} & \mathcal{D}'(\Omega, \mathbb{K}) &  \arrow{r}{\text{curl}} & \mathcal{D}'(\Omega, \mathbb{R}^{4})  &  \arrow{r}{\text{div}} & \mathcal{D}'(\Omega, \mathbb{R})            \cr
\end{matrix}
\end{align*}
It remains for us to characterize the behavior of our function spaces on the boundary of the domain, $\partial \Omega$. Towards this end, we can introduce the following trace identities for 1-forms
\begin{align}
    \nonumber \left(\text{tr}^{(1)} E \right)(F) &= \int_{\partial \Omega} \left( n \times E \right) : F \, ds \\[1.0ex]
     &= \int_{\Omega} \left(\text{Curl} \, E \right) : F \, dx - \int_{\Omega} \left(\text{curl} \, F \right) \cdot E \, dx,  \label{trace_one_A}
\end{align}
where $E \in H\left( \text{Curl}, \Omega, \mathbb{R}^{4} \right)$ and $F \in H\left( \text{curl}, \Omega, \mathbb{K} \right)$.
In a similar fashion,
\begin{align}
    \nonumber \left(\text{tr}^{(1)} E \right)(F) &= \frac{1}{2} \int_{\partial \Omega}  \left[  E \otimes n - n \otimes E\right] : F \, ds \\[1.0ex]
    &= \int_{\Omega} \left( \text{Div} \, F \right) \cdot E \, dx - \int_{\Omega} F : \left(\text{skwGrad} \, E \right) \, dx, \label{trace_one_C}
\end{align}
where $E \in H\left( \text{skwGrad}, \Omega, \mathbb{R}^{4} \right)$ and $F \in H\left( \text{Div}, \Omega, \mathbb{K} \right)$.
We can also introduce trace identities for 2-forms
\begin{align}
    \nonumber \left(\text{tr}^{(2)} F \right)(E) &= \int_{\partial \Omega} \left( n \times F \right) \cdot E \, ds \\[1.0ex]
     &= \int_{\Omega} \left(\text{curl} \, F \right) \cdot E \, dx -\int_{\Omega} \left(\text{Curl} \, E \right) : F \, dx,  \label{trace_two_A}
\end{align}
 where $E \in H\left( \text{Curl}, \Omega, \mathbb{R}^{4} \right)$ and $F \in H\left( \text{curl}, \Omega, \mathbb{K} \right)$.
In addition, it can be shown that
\begin{align}
    \nonumber \left(\text{tr}^{(2)} \mathcal{M} \right)(\mathcal{E}) &= \int_{\partial \Omega} \mathcal{M} n \cdot \mathcal{E} \, ds \\[1.0ex]
    &= \int_{\Omega} \left( \text{Div} \, \mathcal{M} \right) \cdot \mathcal{E} \, dx - \int_{\Omega} \mathcal{M} : \left(\text{skwGrad} \, \mathcal{E} \right) \, dx, \label{trace_two_C}
\end{align}
where $\mathcal{E} \in H\left( \text{skwGrad}, \Omega, \mathbb{R}^{4} \right)$ and $\mathcal{M} \in H\left( \text{Div}, \Omega, \mathbb{K} \right)$,
or similarly
\begin{align}
    \nonumber \left(\text{tr}^{(2)} \mathcal{M} \right)(\mathcal{E}) &= \int_{\partial \Omega} \mathcal{M} n \cdot \mathcal{E} \, ds \\[1.0ex]
    &= \int_{\Omega} \left( \text{Div} \, \mathcal{M} \right) \cdot \mathcal{E} \, dx - \int_{\Omega} \mathcal{M} \times \left(\mathrm{Curl} \, \mathcal{E} \right) \, dx, \label{trace_two_D}
\end{align}
where $\mathcal{E} \in H\left( \text{Curl}, \Omega, \mathbb{R}^{4} \right)$ and $\mathcal{M} \in H\left( \text{Div}, \Omega, \mathbb{K} \right)$. Finally, for 3-forms we have that
\begin{align}
    \nonumber \left(\text{tr}^{(3)} G \right)(u) &= \int_{\partial \Omega} \left(G \cdot n \right) u \, ds \\[1.0ex]
    &= \int_{\Omega} \left(\text{div} \,G \right) u \, dx + \int_{\Omega} G \cdot \left( \text{grad} \, u \right) dx, \label{trace_three}
\end{align}
where $G \in H(\text{div}, \Omega, \mathbb{R}^{4})$ and $u \in H(\text{grad}, \Omega, \mathbb{R})$. In the discussion above, we note that the cross-product operator between 4-vectors is defined such that
\begin{align*}
    \left[M \times N \right]_{ij} &= \sum_{k,l=1}^{4} \varepsilon_{ijkl} M_k N_l, 
\end{align*}
and
\begin{align*}
    M\times N &=\begin{bmatrix}
        0 & M_{3} N_4 - M_{4} N_3 & M_{4} N_2 - M_{2} N_4 & M_{2} N_3 - M_{3} N_2 \\[1.0ex]
        M_{4} N_3 - M_{3} N_4 & 0 & M_{1} N_4 - M_{4} N_1 & M_{3} N_1 - M_{1} N_3 \\[1.0ex]
        M_{2} N_4 - M_{4} N_2 & M_{4} N_1 - M_{1} N_4 & 0 & M_{1} N_2 - M_{2} N_1 \\[1.0ex]
        M_{3} N_2 - M_{2} N_3 & M_{1} N_3 - M_{3} N_1 & M_{2} N_1 - M_{1} N_2 & 0
    \end{bmatrix},
\end{align*}
where $M \in \mathbb{R}^{4}$ and $N \in \mathbb{R}^{4}$.
In addition, the cross product operator between a 4-vector and a $4 \times 4$ skew-symmetric matrix is
\begin{align*}
    \left[M \times U\right]_{i} = \sum_{k,l=1}^{4} \varepsilon_{ijkl} M_{j} U_{kl},
\end{align*}
and
\begin{align*}
    M \times U  =  \begin{bmatrix}
         M_{2} \left( F_{34} - F_{43} \right) + M_{3} \left(F_{42} - F_{24} \right) + M_{4} \left(F_{23} - F_{32} \right) \\[1.0ex]
         M_{1} \left(F_{43} - F_{34} \right) + M_{3} \left(F_{14} - F_{41} \right) + M_{4} \left(F_{31} - F_{13} \right) \\[1.0ex]
         M_{1} \left(F_{24} - F_{42} \right) + M_{2} \left(F_{41} - F_{14} \right) + M_{4} \left(F_{12} - F_{21} \right) \\[1.0ex]
         M_{1} \left(F_{32} - F_{23}\right) + M_{2} \left(F_{13} - F_{31}\right) + M_{3} \left(F_{21} - F_{12} \right)
    \end{bmatrix},
\end{align*}
where $M \in \mathbb{R}^{4}$ and $U \in \mathbb{K}$. Finally, the cross-product operator between two, $4 \times 4$ skew-symmetric matrices is defined such that
\begin{align*}
    U \times V &= \sum_{1\leq i < j \leq 4} \; \sum_{1\leq k < l \leq 4} \varepsilon_{ijkl} U_{ij} V_{kl} \\[1.0ex]
    &= U_{12} V_{34} - U_{13} V_{24} + U_{14} V_{23} + U_{23} V_{14} - U_{24} V_{13} + U_{34} V_{12},
\end{align*}
where $U \in \mathbb{K}$ and $V \in \mathbb{K}$.

In the remainder of this paper, we will utilize the following trace definitions: 
\begin{align*}
    \text{0-forms} \qquad u &= \Upsilon_{0} \omega, \qquad \text{tr}(u) = u\vert_{\partial \Omega}, \\[1.0ex]
    \text{1-forms} \qquad E &= \Upsilon_{1} \omega, \qquad \text{tr}(E) = \frac{1}{2} \left(E \otimes n -  n \otimes E \right)\vert_{\partial \Omega}, \\[1.0ex]
    \text{2-forms} \qquad F &= \Upsilon_{2} \omega, \qquad \text{tr}(F) = \left(n \times F\right)\vert_{\partial \Omega}, \\[1.0ex]
    \text{3-forms} \qquad G &= \Upsilon_{3} \omega, \qquad \text{tr}(G) = \left(G\cdot n\right)\vert_{\partial \Omega},
\end{align*}
where
\begin{align*}
    &u \in H \left(\text{grad}, \Omega, \mathbb{R} \right), \qquad \qquad \; \; \, \text{tr}(u) \in H^{1/2}\left(\partial \Omega, \mathbb{R} \right), \\[1.0ex]
    &E \in H \left(\text{skwGrad}, \Omega, \mathbb{R}^{4} \right), \qquad \text{tr}(E) \in  H^{-1/2}\left(\partial \Omega, \mathbb{K} \right), \\[1.0ex]
   & F \in H \left(\text{curl}, \Omega, \mathbb{K} \right), \qquad \qquad \; \; \, \text{tr}(F) \in H^{-1/2}\left(\partial \Omega, \mathbb{R}^{4} \right), \\[1.0ex]
    &G \in H \left(\text{div}, \Omega, \mathbb{R}^{4} \right), \qquad \qquad \; \, \text{tr}(G) \in H^{-1/2}\left(\partial \Omega, \mathbb{R} \right).
\end{align*}
We omit traces on 4-forms because they are not well-defined. In addition, we note that detailed analysis is required to prove the inclusions of the traces within the appropriate fractional Sobolev spaces (above). For the sake of brevity, this topic will be explored in future work.

From the identities above, we can identify the traces for the $s$-forms on to any hyperplane. For example, suppose a simply connected Lipschitz domain $\Omega$ has a boundary with a non-trivial intersection with the $x_4=0$ hyperplane. Let  $\partial \Omega \cap \{x_4=0\} = \mathcal{F}$. On this hyperplane, the unit normal is $n=[0,0,0,1]^T$ and for a sufficiently smooth $s$-form:

\begin{itemize}
	\item If $s=0$ and $u = \Upsilon_{0} \omega$,
    \begin{equation}
        \tr{\mathcal F}{u} = u\vert_{\mathcal{F}} = u(x_1,x_2,x_3,0). 
    \end{equation}
    That is, the restriction of $u$ to $\mathcal{F}$. This trace can be identified with a scalar field $\Tr{\mathcal F}{u}$ which is a 0-form proxy on $\mathcal{F}$.
    \item If $s=1$ and $E = \Upsilon_{1} \omega$, 
    \begin{align}
        \nonumber \tr{\mathcal F}{E} &= \frac{1}{2} \left(E \otimes n - n \otimes E\right)\vert_{\mathcal{F}} \\[1.0ex] 
        \nonumber &= \frac{1}{2} \begin{bmatrix}
            0 & 0 & 0 & E_1(x_1,x_2,x_3,0) \\[1.0ex]
            0 & 0 & 0 & E_2(x_1,x_2,x_3,0) \\[1.0ex]
            0 & 0 & 0 & E_3(x_1,x_2,x_3,0) \\[1.0ex]
            -E_1(x_1,x_2,x_3,0) & -E_2(x_1,x_2,x_3,0) & -E_3(x_1,x_2,x_3,0) & 0
        \end{bmatrix} \\[1.0ex]
        &= \frac{1}{2} \VtoM{\left[0, 0, E_1(x_1,x_2,x_3,0), 0, E_2(x_1,x_2,x_3,0), E_3(x_1,x_2,x_3,0) \right]^{T}}.
        \label{trace_formula_one}
    \end{align}
    That is, the bivector trace of $E$ on to $\mathcal{F}$. This trace can be identified with a 3-vector $\Tr{\mathcal F}{E}$ which is a 1-form proxy on $\mathcal{F}$. 
    \item If $s=2$ and $F = \Upsilon_{2} \omega$, 
    \begin{equation}
        \tr{\mathcal F}{F} = \left(n \times F\right)\vert_{\mathcal{F}} = 2 \begin{bmatrix}
            F_{23}(x_1,x_2,x_3,0) \\[1.0ex]
            -F_{13}(x_1,x_2,x_3,0) \\[1.0ex]
            F_{12}(x_1,x_2,x_3,0) \\[1.0ex]
            0
        \end{bmatrix}.
    \end{equation}
    That is, the tangential trace of $F$ on to $\mathcal{F}$. This trace can be identified with a 3-vector $\Tr{\mathcal F}{F}$ which is a 2-form proxy on $\mathcal{F}$.
	\item If $s=3$ and $G = \Upsilon_{3} \omega$, 
    \begin{equation}
        \tr{\mathcal F}{G} = \left(G \cdot n\right)\vert_{\mathcal{F}} = G_{4}(x_1,x_2,x_3,0).
    \end{equation}
    That is, the normal trace of $G$ on to $\mathcal{F}$. This trace can be identified with a scalar field $\Tr{\mathcal F}{G}$ which is a 3-form proxy on $\mathcal{F}$.
    \item If $s = 4$, the trace is not well-defined.
\end{itemize}
These identifications will be used while constructing the {\it bubble spaces:} subspaces of finite element polynomial spaces whose traces vanish on the boundary of the element. 

Having establishing the Sobolev spaces and the corresponding derivative and trace identities, we introduce the pullback operator $\phi^{\ast}$ of the differential forms $\omega$
\begin{align}
    u &= \Upsilon_0 \omega, \quad \forall u \in H \left(\text{grad}, \Omega, \mathbb{R} \right), \qquad \qquad \Upsilon_0 \phi^{\ast} \omega = u \circ \phi, \\[1.0ex]
    E &= \Upsilon_{1} \omega, \quad \forall E \in H \left(\text{skwGrad}, \Omega, \mathbb{R}^{4} \right), \quad \;  \Upsilon_{1} \phi^{\ast} \omega = D \phi^{T} \left[E \circ \phi \right], \\[1.0ex] 
    F &= \Upsilon_{2} \omega, \quad \forall F \in H \left(\text{curl}, \Omega, \mathbb{K} \right), \qquad \qquad \; \Upsilon_{2} \phi^{\ast} \omega =  D\phi^{T} \left[F \circ \phi \right] D\phi, \label{new_map_orig} \\[1.0ex] 
    G &= \Upsilon_{3} \omega, \quad \forall G \in H \left(\text{div}, \Omega, \mathbb{R}^{4} \right), \quad \quad \qquad \Upsilon_{3} \phi^{\ast} \omega = \left| D\phi \right| D \phi^{-1} \left[ G \circ \phi \right], \label{piola_map_orig} \\[1.0ex]
    q &= \Upsilon_{4} \omega, \quad \forall q \in L^{2} \left(\Omega, \mathbb{R} \right), \qquad \qquad \qquad \; \; \Upsilon_{4} \phi^{\ast} \omega = \left| D \phi \right| \left[ q \circ \phi \right],
\end{align}
where $[D\phi]_{ij} = \partial_{j}\phi_i$ is the Jacobian matrix. We note that Eq.~\eqref{piola_map_orig} is the space-time Piola transformation, which was previously obtained in~\cite{bazilevs2008isogeometric,gopalakrishnan2017mapped}. In addition, the tensor transformation in Eq.~\eqref{new_map_orig} appears to be new. One may consult~\ref{pull_back_appendix} for a proof of this transformation.

Finally, consider the trace-free Sobolev spaces
\begin{align*}
    H_{0}\left( \text{grad}, \Omega, \mathbb{R} \right) &= \left\{u \in L^{2}\left(\Omega, \mathbb{R} \right) : \text{grad} \, u \in L^{2} \left(\Omega,\mathbb{R}^{4} \right), \, \text{tr}(u) = 0 \right\}, \\
    H_{0}\left( \text{skwGrad}, \Omega, \mathbb{R}^{4} \right) &= \left\{E \in L^{2}\left(\Omega, \mathbb{R}^{4} \right) : \text{skwGrad} \, E \in L^{2} \left(\Omega, \mathbb{K} \right), \, \text{tr}(E) = 0 \right\}, \\
    H_{0}\left( \text{curl}, \Omega, \mathbb{K} \right) &= \left\{F \in L^{2}\left(\Omega, \mathbb{K} \right) : \text{curl} \, F \in L^{2} \left(\Omega,\mathbb{R}^{4} \right), \, \text{tr}(F) = 0 \right\}, \\
    H_{0}\left( \text{div}, \Omega, \mathbb{R}^{4} \right) &= \left\{G \in L^{2}\left(\Omega, \mathbb{R}^{4} \right) : \text{div} \, G \in L^{2} \left(\Omega,\mathbb{R} \right), \, \text{tr}(G) = 0 \right\}, 
\end{align*}
and
\begin{align*}
    H_{0}\left( \text{Curl}, \Omega, \mathbb{R}^{4} \right) &= \left\{E \in L^{2}\left(\Omega, \mathbb{R}^{4} \right) : \text{Curl} \, E \in L^{2} \left(\Omega, \mathbb{K} \right), \, \text{tr}(E) = 0 \right\}, \\
    H_{0}\left( \text{Div}, \Omega, \mathbb{K} \right) &= \left\{F \in L^{2}\left(\Omega, \mathbb{K} \right) : \text{Div} \, F \in L^{2} \left(\Omega,\mathbb{R}^{4} \right), \, \text{tr}(F) = 0 \right\}.
\end{align*}
Using several of these spaces, we can construct the following dual chain complex
\begin{align*}
	\begin{matrix}
		L^{2}(\Omega,\mathbb{R}) & \arrow{l}{-\mathrm{div}} & H_{0}(\mathrm{div}, \Omega, \mathbb{R}^{4}) & \arrow{l}{\mathrm{Div}} & H_{0}(\mathrm{Div}, \Omega, \mathbb{K}) & \arrow{l}{\mathrm{Curl}} & H_{0}(\mathrm{Curl},\Omega, \mathbb{R}^{4})  & \arrow{l}{-\mathrm{grad}} & H_{0}\left( \text{grad}, \Omega, \mathbb{R} \right).
	\end{matrix}
\end{align*}
This particular de Rahm complex is used frequently throughout~\cite{gopalakrishnan2018auxiliary}.

\subsection{Relationship Between Isotropic and Anisotropic Sobolev Spaces}

Often, the space-time domain $\Omega$ can be expressed as the tensor product of a one-dimensional time interval $\left(0, T\right)$ and a spatial domain $\Omega_{x}$. In accordance with~\cite{langer2019space}, under these circumstances, we may now construct the following anisotropic Sobolev spaces
\begin{align*}
    H^{r,s}\left(\Omega \right) = L^{2}\left(0,T; H^{r}(\Omega_x) \right) \cap H^{s} \left(0,T; L^{2}(\Omega_x) \right),
\end{align*}
where evidently $r \geq 0$, $s \geq 0$, for a sufficiently smooth spatial domain $\Omega_x$. In addition, we can define the following norm
\begin{align*}
    \left\| u \right\|_{H^{r,s}(\Omega)}^{2} = \left\| u \right\|_{L^{2}(\Omega)}^{2} + \left| u \right|_{L^{2}\left(0,T; H^{r}(\Omega_x) \right)}^{2} + \left| u \right|_{H^{s} \left(0,T; L^{2}(\Omega_x) \right)}^{2},
\end{align*}
where 
\begin{align*}
    \left| u \right|_{L^{2}\left(0,T; H^{r}(\Omega_x) \right)}^{2} &= \int_{\Omega_x} \int_{\Omega_x} \frac{\left\| u(x, \cdot ) - u(y,\cdot)\right\|^{2}_{L^{2}(0,T)}}{\left| x-y \right|^{d+2r}} \, dy \, dx, \\[1.0ex] \\
    \left| u \right|_{H^{s}\left(0,T; L^{2}(\Omega_x) \right)}^{2} &= \int_{0}^{T} \int_{0}^{T} \frac{\left\| u(\cdot, t ) - u(\cdot,\tau)\right\|^{2}_{L^{2}(\Omega_x)}}{\left| t-\tau \right|^{1+2s}} \, d\tau \, dt.
\end{align*}
By inspection, we have that
\begin{align*}
    H^{s,s}(\Omega) = H^{s}(\Omega).
\end{align*}
In this fashion, the anisotropic and isotropic Sobolev spaces coincide when $r = s$. We can now introduce the following embedding theorem
\begin{theorem}
    Suppose that $r, s\ \in \left[0,1\right]$. Then, the following continuous embeddings hold
    \begin{align*}
        H^{\max(r,s)}(\Omega) \hookrightarrow H^{r,s}(\Omega) \hookrightarrow  H^{\min(r,s)}(\Omega).
    \end{align*}
\end{theorem}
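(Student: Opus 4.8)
The plan is to reduce everything to two facts already at hand: the diagonal identity $H^{q,q}(\Omega)=H^q(\Omega)$ (with equivalent norms) noted above, and the monotonicity of the scalar fractional Sobolev scales \emph{separately in each variable}. First I would normalize: by the symmetry of the statement in $r$ and $s$ assume without loss of generality that $r\le s$, so $\max(r,s)=s$ and $\min(r,s)=r$; the case $r=s$ is trivial since then all three spaces literally coincide. Next, use Tonelli's theorem to rewrite the two anisotropic seminorms in $\|u\|_{H^{r,s}(\Omega)}^2$ as slice-wise Gagliardo seminorms,
\[
|u|_{L^2(0,T;H^r(\Omega_x))}^2=\int_0^T |u(\cdot,t)|_{H^r(\Omega_x)}^2\,dt,\qquad
|u|_{H^s(0,T;L^2(\Omega_x))}^2=\int_{\Omega_x}|u(x,\cdot)|_{H^s(0,T)}^2\,dx ,
\]
so that $\|u\|_{H^{r,s}(\Omega)}^2$ is the sum of $\|u\|_{L^2(\Omega)}^2$, the time-integrated order-$r$ seminorm of the spatial slices, and the space-integrated order-$s$ seminorm of the temporal slices. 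Finally I would record the consequence of $H^{q,q}(\Omega)=H^q(\Omega)$: the isotropic norm of order $q$ is equivalent to the sum of $\|u\|_{L^2(\Omega)}^2$ with the two order-$q$ slice seminorms.

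For the left embedding $H^s(\Omega)\hookrightarrow H^{r,s}(\Omega)$: the space-integrated order-$s$ seminorm of the temporal slices is one of the terms of $\|u\|_{H^{s,s}(\Omega)}^2$, hence $\lesssim\|u\|_{H^s(\Omega)}^2$. For the other term, apply the scalar embedding $H^s(\Omega_x)\hookrightarrow H^r(\Omega_x)$ (valid since $r\le s$, on a sufficiently regular bounded $\Omega_x$) to $u(\cdot,t)$ for a.e.\ $t$, square, integrate in $t$, and bound by $\|u\|_{L^2(0,T;H^s(\Omega_x))}^2\le\|u\|_{H^{s,s}(\Omega)}^2\simeq\|u\|_{H^s(\Omega)}^2$. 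For the right embedding $H^{r,s}(\Omega)\hookrightarrow H^r(\Omega)$: using $\|u\|_{H^r(\Omega)}\simeq\|u\|_{H^{r,r}(\Omega)}$ it suffices to dominate the two order-$r$ slice seminorms by $\|u\|_{H^{r,s}(\Omega)}$. The time-integrated order-$r$ seminorm of the spatial slices is literally a term of $\|u\|_{H^{r,s}(\Omega)}^2$; for the space-integrated order-$r$ seminorm of the temporal slices, use the one-dimensional embedding $H^s(0,T)\hookrightarrow H^r(0,T)$ on each slice $u(x,\cdot)$, square, integrate in $x$, and bound by $\int_{\Omega_x}\|u(x,\cdot)\|_{H^s(0,T)}^2\,dx=\|u\|_{L^2(\Omega)}^2+|u|_{H^s(0,T;L^2(\Omega_x))}^2\le\|u\|_{H^{r,s}(\Omega)}^2$. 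Assembling these estimates yields both continuous inclusions, with constants depending only on $r,s,d,T$ and $\Omega_x$.

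The main obstacle is handling the endpoints $r,s\in\{0,1\}$ honestly: there the Gagliardo--Slobodeckij integrals degenerate (they diverge at order $0$ and do not reproduce $W^{1,2}$ at order $1$), so one must adopt the usual conventions $H^0=L^2$ and $|u|_{H^1}=\|\nabla u\|_{L^2}$ and replace the fractional scalar embeddings by the classical integer-order ones; after this bookkeeping the argument above is unchanged (and when $r$ or $s$ equals $0$ or $1$ one may also simply pass to an intermediate non-integer index and chain embeddings). A secondary delicate point is the identity $H^{q,q}(\Omega)=H^q(\Omega)$ itself, asserted ``by inspection'' earlier: its careful justification for $q\in(0,1)$ on a Lipschitz space-time cylinder uses the triangle inequality inside the Gagliardo kernel together with a splitting of the domain of integration according to whether $|x-y|$ or $|t-\tau|$ dominates — this is the only genuinely nontrivial ingredient, everything else being Tonelli and monotonicity of the Sobolev scales.
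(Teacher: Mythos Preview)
Your argument is correct. The paper itself does not give an independent proof: it simply invokes Lemma~2.1 of \cite{langer2019space} (with $\Omega$ in place of $\Sigma$) and stops there. So in effect you have supplied a self-contained proof where the paper only gives a citation.

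Your route---reduce to the diagonal identity $H^{q,q}(\Omega)=H^q(\Omega)$, rewrite the anisotropic seminorms via Tonelli as slice-integrated Gagliardo seminorms, and then use the monotonicity $H^s\hookrightarrow H^r$ of the one-variable fractional scales slice-by-slice---is the natural and standard one for anisotropic Sobolev spaces on cylinders, and is almost certainly what the cited reference does as well. Your remarks on the endpoint cases $r,s\in\{0,1\}$ and on the nontriviality of the diagonal identity $H^{q,q}=H^q$ are apt; the paper asserts the latter ``by inspection'' without further justification, so your noting that this is the only genuinely nontrivial ingredient is a fair observation.
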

\begin{proof}
    The proof follows immediately from the arguments in Lemma 2.1 of~\cite{langer2019space}. Note, it is necessary to substitute $\Omega$ in place of $\Sigma$ in the original theorem.
\end{proof}
Based on the theorem above, we have a clear relationship between isotropic and anisotropic Sobolev spaces. It follows that many space-time problems which are naturally associated with anisotropic Sobolev spaces can still be approximated using isotropic Sobolev spaces. In particular, we can use isotropic Sobolev spaces for finite element methods as long as they are a subspace of the anisotropic Sobolev space associated with our problem. For example, we merely require that
\begin{align*}
    H^{p}(\Omega) \subset H^{r,s}(\Omega),
\end{align*}
for some suitable choice of $p$.

In what follows, we present an alternative approach for avoiding anisotropic Sobolev spaces: namely, the use of isotropic Sobolev spaces in conjunction with Lagrange multipliers.

\begin{example} 
    Consider the following parabolic  problem with $f \in L^{2}(0,T; H^{-1}(\Omega_x))$ and $g \in H^{1}_{0}(\Omega_x)$
    \begin{align*}
        \frac{\partial u}{\partial t} - \Delta u &= f, \qquad \text{in} \quad \Omega \\
        u &= 0, \quad \; \, \text{on} \quad \partial \Omega_x \times (0,T) \\
        u &= g, \quad \; \; \text{on} \quad \Omega_x \times \{t = 0\}.
    \end{align*}
    We can construct the standard weak formulation
    \begin{align*}
        \int_{0}^{T} \int_{\Omega_x} \frac{\partial u}{\partial t} w \, dx \, dt + \int_{0}^{T} \int_{\Omega_x} \nabla u \cdot \nabla w 
        \, dx \, dt = \int_{0}^{T} \int_{\Omega_x} f w \, dx \, dt,
    \end{align*}
    where the test function $w$ and the weak solution $u$ satisfy the following 
    \begin{align*}
        u, w \in L^{2}\left(0,T; H^{1}_{0}(\Omega_x) \right) \cap H^{1} \left(0,T; H^{-1}(\Omega_x) \right),
    \end{align*}
    in accordance with Chapter 7 of Evans~\cite{evans2010partial}.
    We can also obtain a weak solution in a suitable subspace of this original space   
    \begin{align*}
        u, w \in L^{2}\left(0,T; H^{1}_{0}(\Omega_x) \right) \cap H^{1} \left(0,T; L^{2}(\Omega_x) \right) \subset L^{2}\left(0,T; H^{1}_{0}(\Omega_x) \right) \cap H^{1} \left(0,T; H^{-1}(\Omega_x) \right).
    \end{align*}
    In either case, the Sobolev space of interest is anisotropic. Fortunately, this issue can be ameliorated by constructing an alternative weak formulation which uses Lagrange multipliers $\lambda$
    \begin{align*}
        \int_{0}^{T} \int_{\Omega_x} \frac{\partial u}{\partial t} w \, dx \, dt + \int_{0}^{T} \int_{\Omega_x} \nabla u \cdot \nabla w \, dx \, dt + \int_{0}^{T} \int_{\partial \Omega_x} \lambda \, w \, ds \, dt &= \int_{0}^{T} \int_{\Omega_x} f w \, dx \, dt, \\
        \int_{0}^{T} \int_{\partial \Omega_x} u \, \mu \,ds \, dt &= 0,
    \end{align*}
    where 
    \begin{align*}
        u, w &\in H^{1}(\Omega) \equiv  L^{2}\left(0,T; H^{1}(\Omega_x) \right) \cap H^{1} \left(0,T; L^{2}(\Omega_x) \right),
    \end{align*}
    and
    \begin{align*}
        \lambda, \mu \in L^{2}(0, T; H^{-1/2}(\partial \Omega_x)).
    \end{align*}
    In this latter formulation, the solution and the test functions both reside in an \emph{isotropic} Sobolev space $H^{1}(\Omega)$. We remark that many problems which are ``naturally" associated with anisotropic Sobolev spaces can be reformulated and re-associated with isotropic Sobolev spaces in a similar fashion. 
\end{example}

\section{Finite Elements on a Reference Tesseract} \label{tesseract_space}

We begin with an explicit construction of tensorial finite element approximation spaces for $s$-forms on the tesseract $\C{4}$, since the structure of these spaces is easy to see. These spaces have been discussed elsewhere \cite{arnold2011serendipity} for cubical meshes in $\mathbb{R}^n$. Our focus here is on explicit descriptions of these spaces and associated degrees of freedom. Another motivation is to highlight the description and role of the bubble spaces.

 In analogy with~\cite{monkbook} and noting that our spaces $\Vk{k}{s}(\C{4}) $ must satisfy the relation
\begin{align*}
	\begin{matrix}
		\Vk{k}{0}(\C{4}) & \arrow{r}{d^{\left(0\right)}} & \Vk{k}{1}(\C{4}) & \arrow{r}{d^{\left(1\right)}} & \Vk{k}{2}(\C{4}) & \arrow{r}{d^{\left(2\right)}} & \Vk{k}{3}(\C{4})  & \arrow{r}{d^{\left(3\right)}} & \Vk{k}{4}(\C{4}),
	\end{matrix}
\end{align*}
we readily see that 
\begin{subequations}
\begin{align}
		\Vk{k}{0}(\C{4}) &:= Q^{k,k,k,k} \label{eq:0formC}\\[1.0ex] 
		\Vk{k}{1}(\C{4}) &:= Q^{k-1,k,k,k}\times Q^{k,k-1,k,k}\times Q^{k,k,k-1,k}\times Q^{k,k,k,k-1}\label{eq:1formC}\\[1.0ex]
		\Vk{k}{2}(\C{4})&:= \VtoM{\begin{bmatrix}
				Q^{k-1,k-1,k,k}\\[1.0ex]
				Q^{k-1,k,k-1,k}\\[1.0ex]
				Q^{k-1,k,k,k-1}\\[1.0ex]
				Q^{k,k-1,k-1,k}\\[1.0ex]
				Q^{k,k-1,k,k-1}\\[1.0ex]
				Q^{k,k,k-1,k-1}
		\end{bmatrix}  } \label{eq:2formC}  \\[1.0ex]
		\Vk{k}{3}(\C{4}) &:= Q^{k,k-1,k-1,k-1}\times Q^{k-1,k,k-1,k-1}\times Q^{k-1,k-1,k,k-1} \times Q^{k-1,k-1,k-1,k}\label{eq:3formC}\\[1.0ex]
		\Vk{k}{4}(\C{4})&:= Q^{k-1,k-1,k-1,k-1},\label{eq:4formC}
\end{align} where we have used the operator $\VtoM{\cdot}$ defined in Eq.~\eqref{eq:Ldefine}.
\end{subequations}
It remains to identify the bubble spaces $\Vkcirc{k}{s}(\C{4})$. The tensorial nature of $\C{4}$ makes this an easy task:
\begin{subequations}
	\begin{align}
		\Vkcirc{k}{0}(\C{4}) &:= \text{span} \left\{\left(\Pi_{n=1}^4 (1-x_n^2)\right)p_i(x_1)p_j(x_2)p_{\ell}(x_3)p_{m}(x_4) \right\}, \quad \forall \; p_i, p_j, p_{\ell}, p_{m} \in P^{k-2}(\C{1}), \label{eq:0formCbubble}
		\end{align}
\begin{align}
	     \Vkcirc{k}{1}(\C{4}) &:= \text{span} \left\{\begin{bmatrix}
			\left(\Pi_{n=1, n\neq 1}^{4}(1- x_{n}^2)\right) p_i(x_1)p_j(x_2)p_{\ell}(x_3)p_{m}(x_4)\\[1.0ex]
                \left(\Pi_{n=1, n\neq 2}^{4}(1- x_{n}^2)\right) p_i(x_2)p_j(x_1)p_{\ell}(x_3)p_{m}(x_4)\\[1.0ex]
                \left(\Pi_{n=1, n\neq 3}^{4}(1- x_{n}^2)\right) p_i(x_3)p_j(x_1)p_{\ell}(x_2)p_{m}(x_4)\\[1.0ex]
			\left(\Pi_{n=1, n\neq 4}^{4}(1- x_{n}^2)\right) p_i(x_4)p_j(x_1)p_{\ell}(x_2)p_{m}(x_3) 
		\end{bmatrix} \right\}, \quad \forall p_i\in P^{k-1}(\C{1}), \quad p_{j},p_{\ell},p_{m} \in P^{k-2}(\C{1}),\label{eq:1formCbubble}  
\end{align}

\begin{align}
	     \Vkcirc{k}{2}(\C{4})&:= \text{span} \left\{ \VtoM{\begin{bmatrix}
				\left(\Pi_{n=1, n\neq1,2}^{4} (1-x_{n}^{2}) \right)                     p_i(x_1)p_j(x_2)p_{\ell}(x_3)p_{m}(x_4) \\[1.0ex]
                    \left(\Pi_{n=1, n\neq1,3}^{4} (1-x_{n}^{2}) \right) p_i(x_1)p_j(x_3)p_{\ell}(x_2)p_{m}(x_4) \\[1.0ex]
                    \left(\Pi_{n=1, n\neq1,4}^{4} (1-x_{n}^{2}) \right) p_i(x_1)p_j(x_4)p_{\ell}(x_2)p_{m}(x_3) \\[1.0ex]
                    \left(\Pi_{n=1, n\neq2,3}^{4} (1-x_{n}^{2}) \right) p_i(x_2)p_j(x_3)p_{\ell}(x_1)p_{m}(x_4) \\[1.0ex]
                    \left(\Pi_{n=1, n\neq2,4}^{4} (1-x_{n}^{2}) \right) p_i(x_2)p_j(x_4)p_{\ell}(x_1)p_{m}(x_3) \\[1.0ex]
                    \left(\Pi_{n=1, n\neq3,4}^{4} (1-x_{n}^{2}) \right) p_i(x_3)p_j(x_4)p_{\ell}(x_1)p_{m}(x_2)
		\end{bmatrix}  } \right\}, \label{eq:2formCbubble} \\[1.0ex]
		\nonumber &\forall \; p_{i}, p_{j} \in P^{k-1}(\C{1}), \qquad p_{\ell}, p_{m} \in P^{k-2}(\C{1}), \\[1.0ex]
		\Vkcirc{k}{3}(\C{4}) &:= \text{span} \left\{ \begin{bmatrix}
			(1-x_{1}^{2}) p_{i}(x_1)p_{j}(x_2)p_{\ell}(x_3)p_{m}(x_4) \\[1.0ex]
                (1-x_{2}^{2}) p_{i}(x_2)p_{j}(x_1)p_{\ell}(x_3)p_{m}(x_4) \\[1.0ex]
                (1-x_{3}^{2}) p_{i}(x_3)p_{j}(x_1)p_{\ell}(x_2)p_{m}(x_4) \\[1.0ex]
                (1-x_{4}^{2}) p_{i}(x_4)p_{j}(x_1)p_{\ell}(x_2)p_{m}(x_3)
		\end{bmatrix} \right\}, \label{eq:3formCbubble} \forall \; p_{i}\in P^{k-2}(\C{1}), \qquad p_{j}, p_{\ell}, p_{m} \in P^{k-1}(\C{1}). 
	\end{align}
\end{subequations}

 
\subsection{Degrees of Freedom on the Reference Tesseract, $\C{4}$ }
Almost a decade ago, a construction of conforming high-order finite element spaces on tensorial elements was provided in~\cite{arnold2014finite}. The degrees of freedom defined as part of this construction were given as
\begin{align*}
\left\{ u \rightarrow \int_f \text{Tr}[f]\left( u \right) \wedge v , \qquad v\in \mathcal{Q}_{k-1}^{-} \Lambda^{d-s}(f), \qquad f \in \Delta_d(\C{n}), \, k\geq 1, \, 0 \leq s\leq n, \, d \geq s \right\},
 \end{align*}
 where $\Delta_d(\C{n})$ are the set of $d$-dimensional faces of the $n$-dimensional reference cube. These are analogs of the $\mathcal{P}_{k}^{-}\Lambda^s$ spaces from \cite{arnold2010finite}. 

In the present work, we specialize the discussion to tensor-product  elements in $\mathbb{R}^4$. Our goal is to provide explicit families of approximation spaces for the spaces in the de Rham sequence (Eq.~\eqref{eq:ourdehram}), and where possible demonstrate these using proxies rather than forms.
Instead of wedge products, we use explicitly-constructed bubble polynomial spaces $\Vkcirc{k}{s}(\C{4}).$ 


We begin by noting that we can decompose the degrees of freedom as those associated with the traces of $\Vk{k}{s}(\C{4})$ and the so-called {\it volumetric} degrees of freedom:
\begin{align*}
\Sk{k}{s}(\C{4}):=\Sigma_{trace}^{k,s}(\C{4}) \cup \Sigma_{vol}^{k,s}(\C{4}).
\end{align*}
Additionally, following the construction for tesseracts, we use well-known trace degrees of freedom for polynomial $s$-forms on hexahedra, quadrilaterals, and line segments to specify $\Sigma_{trace}^{k,s}(\C{4})$. If $u\in \Vk{k}{s}(\C{4})$ has all its trace degrees of freedom vanish then $u \in \Vkcirc{k}{s}(\C{4}).$ Our task, then, is to provide well-defined dual spaces for $\Vkcirc{k}{s}(\C{4}).$

\subsection{Dofs for 0-forms on $\C{4}$}
We recall that $\Vk{k}{0}(\C{4}):=Q^{k,k,k,k}$ and hence
\begin{align*}
\text{dim}\left(\Vk{k}{0}(\C{4})\right) = \text{dim}\left( \Sigma^{k,0}(\C{4}) \right) = (k+1)^4.
\end{align*}
There are 16 vertices, 32 edges $\C{1}$, 24 quadrilateral faces $\C{2}$, and 8 cubic facets $\C{3}$. Using the degrees of freedom associated with these objects (defined in Eqs.~\eqref{eq:edge-0}, \eqref{eq:square-0}, and \eqref{eq:hex0}) we see that
\begin{align*}	
	\text{dim} \left(\Sigma_{trace}^{k,0}(\C{4})\right)&:= 16 + 32 \, \text{dim} (P^{k-2}(\C{1})) \\
    &+ 24 \, \text{dim} \left(Q^{k-2,k-2}(\C{2})\right) + 8 \,\text{dim}\left( Q^{k-2,k-2,k-2}(\C{3}) \right) \\
	&= 16 + 32(k-1)+24(k-1)^2 + 8 (k-1)^3 = 8k(k^2+1).
\end{align*}
From this, it follows that for 0-forms 
\begin{align*}\text{dim}\left(\Sigma_{vol}^{k,0}(\C{4})\right) &= \text{dim}\left(\Sigma^{k,0}(\C{4})\right) - \text{dim}\left(\Sigma_{trace}^{k,0}(\C{4})\right) \\ 
&= (k+1)^4 - 8k(k^2+1) = (k-1)^4.\end{align*}
We can define the volumetric degrees of freedom for the 0-form proxy $u$ as follows
\begin{equation}\label{eq:tesseract0a}
	\Sigma_{vol}^{k,0}(\C{4}):=\left\{ u \rightarrow \int_{\C{4}} u q, \qquad q \in Q^{k-2,k-2,k-2,k-2}(\C{4})\right\}.    
\end{equation} 
It then remains for us to prove unisolvency.

\begin{lemma}
    Let $u\in \Vk{k}{0}(\C{4})$ be a polynomial 0-form for which all the degrees of freedom $\Sk{k}{0}(\C{4})$ vanish. Then $u \equiv 0$. \label{tesseract_lemma_0}
\end{lemma}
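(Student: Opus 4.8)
The plan is to exploit the decomposition $\Sk{k}{0}(\C{4}) = \Sigma^{k,0}_{trace}(\C{4}) \cup \Sigma^{k,0}_{vol}(\C{4})$ together with the explicit factored description of the bubble space in Eq.~\eqref{eq:0formCbubble}, namely $\Vkcirc{k}{0}(\C{4}) = \left(\prod_{n=1}^{4}(1-x_n^2)\right) Q^{k-2,k-2,k-2,k-2}(\C{4})$. The argument proceeds in three moves: (i) vanishing of the trace dofs forces $u$ to vanish on $\partial\C{4}$, hence $u\in\Vkcirc{k}{0}(\C{4})$; (ii) write $u$ in the factored bubble form; (iii) test the volumetric dofs against the polynomial factor of $u$ and use positivity of the weight.

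For move (i), fix a hexahedral facet $\mathcal{F}$ of $\C{4}$. Since $u\in\Vk{k}{0}(\C{4})=Q^{k,k,k,k}$, the restriction $\mathrm{Tr}[\mathcal{F}](u)$ lies in $Q^{k,k,k}(\mathcal{F})$. By the compatibility identity $\mathrm{Tr}[f](u)=\mathrm{Tr}[f]\!\left(\mathrm{Tr}[\mathcal{F}](u)\right)$ for a subentity $f\subset\mathcal{F}$, the trace dofs supported on $\mathcal{F}$ and on its quadrilateral faces, edges, and vertices (Eqs.~\eqref{eq:hex0}, \eqref{eq:square-0}, \eqref{eq:edge-0}, together with the $16$ vertex values) are exactly the dofs one evaluates for $\mathrm{Tr}[\mathcal{F}](u)$; a count gives $8+12(k-1)+6(k-1)^2+(k-1)^3=(k+1)^3=\dim Q^{k,k,k}(\mathcal{F})$ of them. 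These are precisely the classical degree-$k$ Lagrange dofs on the reference hexahedron, which are unisolvent, so their vanishing yields $\mathrm{Tr}[\mathcal{F}](u)=0$. Repeating over all eight facets gives $u|_{\partial\C{4}}=0$.

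For moves (ii) and (iii): a polynomial $u\in Q^{k,k,k,k}$ vanishing on $\partial\C{4}$ vanishes on each hyperplane $x_n=\pm1$, so $u$ is divisible by $(1-x_n^2)$ for each $n$; since the four factors are pairwise coprime, $u=\left(\prod_{n=1}^{4}(1-x_n^2)\right)v$ with $v\in Q^{k-2,k-2,k-2,k-2}(\C{4})$, i.e.\ $u\in\Vkcirc{k}{0}(\C{4})$. Now apply the volumetric dofs of Eq.~\eqref{eq:tesseract0a} with the admissible test function $q=v$:
\begin{align*}
0 = \int_{\C{4}} u\, v \, dx = \int_{\C{4}} \left(\textstyle\prod_{n=1}^{4}(1-x_n^2)\right) v^2 \, dx .
\end{align*}
Since the weight $\prod_{n=1}^{4}(1-x_n^2)$ is strictly positive on the interior of $\C{4}$, this forces $v\equiv0$ and hence $u\equiv0$.

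The only genuinely nonroutine ingredient is move (i): identifying the facet/face/edge/vertex trace dofs with the standard unisolvent Lagrange dof set on the cube and invoking that unisolvence (equivalently, that a $Q^{k,k,k}$ polynomial all of whose tensor-product moment dofs vanish must be zero). Everything else is bookkeeping; as a consistency check, $\dim\Sigma^{k,0}_{trace}(\C{4})+\dim\Sigma^{k,0}_{vol}(\C{4})=8k(k^2+1)+(k-1)^4=(k+1)^4=\dim\Vk{k}{0}(\C{4})$, so the constructed dof set has the correct cardinality and unisolvence on the full tesseract reduces exactly to the injectivity argument above.
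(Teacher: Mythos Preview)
Your proof is correct and follows essentially the same approach as the paper: reduce to the bubble space via vanishing of the trace dofs, factor out $\prod_{n=1}^4(1-x_n^2)$, then test the volumetric dofs against the remaining $Q^{k-2,k-2,k-2,k-2}$ factor and use positivity of the weight. The only difference is that you spell out move~(i) explicitly by identifying the trace dofs on each hexahedral facet with the standard unisolvent $Q^{k,k,k}$ Lagrange dofs, whereas the paper simply asserts that vanishing trace dofs force $u\in\Vkcirc{k}{0}(\C{4})$ (implicitly relying on the guiding-principles discussion earlier in Section~2).
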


\begin{proof}
    We first note that 
    \begin{align*}
       \text{dim}\left(\Sigma^{k,0}(\C{4})\right) =  
       \text{dim}\left(\Vk{k}{0}(\C{4})\right) =
       \text{dim}\left(\Sigma_{trace}^{k,0}(\C{4})\right) + \text{dim}\left(\Sigma_{vol}^{k,0}(\C{4})\right).
    \end{align*}
    Unisolvency will follow immediately if we can show that the vanishing of all degrees of freedom for $u\in \Vk{k}{0}(\C{4})$ implies the vanishing of $u$.
	
    Since all the trace degrees of freedom of the form given by Eqs.~\eqref{eq:edge-0}, \eqref{eq:square-0}, and \eqref{eq:hex0} vanish, the polynomial 0-form $u$ has vanishing traces, and is hence in $\Vkcirc{k}{0}(\C{4})$. It therefore has the form in Eq.~\eqref{eq:0formCbubble}, i.e.,
    \begin{align*}
        u =  \left(\Pi_{n=1}^4 (1-x_n^2) \right) \widehat{u},
    \end{align*}
    where
    \begin{align*}
        \widehat{u} = \sum_{ij\ell m} u_{ij \ell m} p_i(x_1)p_j(x_2)p_{\ell}(x_3)p_{m}(x_4),
    \end{align*}
    and where
    \begin{align*}
        p_i, p_j, p_{\ell}, p_{m} \in P^{k-2}(\C{1}).
    \end{align*}
    The proof follows immediately by choosing the test function $q = \widehat{u}$, and thereafter substituting this function and $u$ (from above) into Eq.~\eqref{eq:tesseract0a}. The vanishing of the associated volumetric degrees of freedom is only possible if $\widehat{u}$ vanishes.
\end{proof}

\subsection{Dofs for 1-forms on $\C{4}$}
We recall that 
\begin{align*}
\Vk{k}{1}(\C{4}) := Q^{k-1,k,k,k}\times Q^{k,k-1,k,k}\times Q^{k,k,k-1,k}\times Q^{k,k,k,k-1},
\end{align*}
and hence
\begin{equation*}
	  \text{dim}\left(\Vk{k}{1}(\C{4})\right) = \text{dim}\left(\Sigma^{k,1}(\C{4})\right) = 4k(k+1)^3.
\end{equation*} 
As stated previously, we will use well-known 1-form degrees of freedom on the edges, faces, and facets.
In particular, we can use the degrees of freedom associated with these objects (defined in Eqs.~\eqref{eq:edge-1}, \eqref{eq:square-1}, and \eqref{eq:hex1}) to obtain
\begin{align*}
	\text{dim}\left(\Sigma_{trace}^{k,1}(\C{4})\right)&:= 32 \, \text{dim}\left(P^{k-1}(\C{1}) \right) + 24 \, \text{dim} \left(Q^{k-2,k-1} \times Q^{k-1,k-2}(\C{2})\right) \\
    &+ 8 \, \text{dim}\left( Q^{k-1,k-2,k-2} \times Q^{k-2,k-1,k-2} \times Q^{k-2,k-2,k-1}(\C{3})\right) \\
	&= 32k+48k(k-1) + 24 k(k-1)^2 = 8k(3k^2+1).
\end{align*}
Next, we can specify the number of volumetric degrees of freedom as follows
\begin{align*}
\text{dim}\left(\Sigma_{vol}^{k,1}(\C{4})\right) &= \text{dim}\left(\Sigma^{k,1}(\C{4})\right) - \text{dim}\left(\Sigma_{trace}^{k,1}(\C{4})\right) \\
&= 4k(k+1)^3 - 8k(3k^2+1) = 4k(k-1)^3. 
\end{align*}
In addition, we can explicitly specify the volumetric degrees of freedom for a 1-form proxy $E$ as follows
\begin{align}
	\nonumber &\Sigma_{vol}^{k,1}(\C{4}):=\Bigg\{ E \rightarrow \int_{\C{4}} E \cdot q, \\
  &q \in Q^{k-1,k-2,k-2,k-2}\times Q^{k-2,k-1,k-2,k-2}\times Q^{k-2,k-2,k-1,k-2}\times Q^{k-2,k-2,k-2,k-1} \Bigg\}. \label{eq:tesseract1a} 
\end{align}
It then remains for us to prove unisolvency.

\begin{lemma}
	Let $E\in \Vk{k}{1}(\C{4})$ be a polynomial 1-form for which all the degrees of freedom $\Sk{k}{1}(\C{4})$ vanish. Then $E \equiv 0.$ \label{tesseract_lemma_1}
\end{lemma}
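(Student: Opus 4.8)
The plan is to mirror the proof of Lemma~\ref{tesseract_lemma_0}, exploiting the tensor-product structure of the bubble space $\Vkcirc{k}{1}(\C{4})$ given in Eq.~\eqref{eq:1formCbubble}. First I would observe that, by the dimension count already carried out, $\text{dim}(\Sigma^{k,1}(\C{4})) = \text{dim}(\Vk{k}{1}(\C{4})) = \text{dim}(\Sigma_{trace}^{k,1}(\C{4})) + \text{dim}(\Sigma_{vol}^{k,1}(\C{4}))$, so it suffices to show that the vanishing of all the degrees of freedom forces $E \equiv 0$. Next, since all the trace degrees of freedom of the form given by Eqs.~\eqref{eq:edge-1}, \eqref{eq:square-1}, and \eqref{eq:hex1} vanish, the polynomial 1-form $E$ has vanishing trace on all facets (and hence all faces and edges), so $E \in \Vkcirc{k}{1}(\C{4})$ and can be written in the explicit form of Eq.~\eqref{eq:1formCbubble}.

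The key step is then to treat each component of $E$ separately. Writing $E = (E_1, E_2, E_3, E_4)^T$, the first component has the form $E_1 = \left(\Pi_{n=2}^{4}(1-x_n^2)\right)\widehat{E}_1$ with $\widehat{E}_1 = \sum u_{ij\ell m}\, p_i(x_1)p_j(x_2)p_\ell(x_3)p_m(x_4)$, where $p_i \in P^{k-1}(\C{1})$ and $p_j, p_\ell, p_m \in P^{k-2}(\C{1})$; thus $\widehat{E}_1 \in Q^{k-1,k-2,k-2,k-2}(\C{4})$, which is precisely the first factor in the polynomial space for the test function $q$ in Eq.~\eqref{eq:tesseract1a}. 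Choosing $q = (\widehat{E}_1, 0, 0, 0)^T$ and substituting into the volumetric degrees of freedom gives $\int_{\C{4}} \left(\Pi_{n=2}^{4}(1-x_n^2)\right)\widehat{E}_1^2\, dx = 0$. Since the weight $\Pi_{n=2}^{4}(1-x_n^2)$ is strictly positive on the interior of $\C{4}$, this forces $\widehat{E}_1 \equiv 0$, hence $E_1 \equiv 0$. Repeating the identical argument with $q$ supported in the second, third, and fourth components in turn yields $E_2 = E_3 = E_4 = 0$, so $E \equiv 0$.

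The main obstacle — really the only point requiring care — is confirming that the explicit bubble representation in Eq.~\eqref{eq:1formCbubble} genuinely exhausts $\Vkcirc{k}{1}(\C{4})$, i.e.\ that a 1-form proxy in $\Vk{k}{1}(\C{4})$ with vanishing trace degrees of freedom really does lie in the span displayed there, and that the dimensions match so that this span is the full bubble space rather than a proper subspace. This is where the compatibility discussion from the Guiding Principles subsection is invoked: one checks that the tangential trace of $E$ along each edge, the appropriate trace on each quadrilateral face, and the facet trace on each hexahedron are each unisolvently pinned down by the lower-dimensional dofs, so that their vanishing is equivalent to $E$ vanishing to the stated order on the coordinate hyperplanes $x_n = \pm 1$, which is exactly what the factor $\Pi_{n \neq j}(1-x_n^2)$ encodes. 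Once that structural fact is in hand, the positivity-of-weight argument closes the proof immediately, exactly as in Lemma~\ref{tesseract_lemma_0}.
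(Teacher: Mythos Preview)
Your proposal is correct and follows essentially the same approach as the paper's own proof: dimension count, reduction to the bubble space via vanishing trace dofs, explicit bubble representation from Eq.~\eqref{eq:1formCbubble}, and then a positivity-of-weight argument against the volumetric dofs in Eq.~\eqref{eq:tesseract1a}. The only cosmetic difference is that the paper chooses the single test function $q = \widehat{E}$ (the full 4-vector) and obtains a sum of four weighted squares whose integral vanishes, whereas you treat the components one at a time with $q$ supported in a single slot; both arguments are equivalent and equally valid.
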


\begin{proof}
    We note that 
    \begin{align*}
       \text{dim}\left(\Sigma^{k,1}(\C{4})\right) =  
       \text{dim}\left(\Vk{k}{1}(\C{4})\right) =
       \text{dim}\left(\Sigma_{trace}^{k,1}(\C{4})\right) + \text{dim}\left(\Sigma_{vol}^{k,1}(\C{4})\right).
    \end{align*}
    Unisolvency will follow immediately if we can show that the vanishing of all degrees of freedom for $E\in \Vk{k}{1}(\C{4})$ implies the vanishing of $E$.
	
	Since all the trace degrees of freedom of the form given by Eqs.~\eqref{eq:edge-1}, \eqref{eq:square-1}, and \eqref{eq:hex1} vanish, the polynomial 1-form $E$ has zero traces, and is hence in $\Vkcirc{k}{1}(\C{4})$. It therefore has the form in Eq.~\eqref{eq:1formCbubble}, i.e.,
    \begin{align*}
        E =  \begin{bmatrix}
			\left(\Pi_{n=1, n\neq 1}^{4}(1- x_{n}^2)\right) \widehat{E}^{1} \\[1.0ex]
                \left(\Pi_{n=1, n\neq 2}^{4}(1- x_{n}^2)\right) \widehat{E}^{2}\\[1.0ex]
                \left(\Pi_{n=1, n\neq 3}^{4}(1- x_{n}^2)\right) \widehat{E}^{3}\\[1.0ex]
			\left(\Pi_{n=1, n\neq 4}^{4}(1- x_{n}^2)\right) \widehat{E}^{4} 
		\end{bmatrix},
     \end{align*}
    where
    \begin{align*}
     \widehat{E}:= \begin{bmatrix} \widehat{E}^{1} \\[1.0ex] \widehat{E}^{2} \\[1.0ex] \widehat{E}^{3} \\[1.0ex] \widehat{E}^{4} \end{bmatrix}  = \begin{bmatrix}
			\sum_{ij \ell m} E_{ij \ell m}^{1} p_i(x_1)p_j(x_2)p_{\ell}(x_3)p_{m}(x_4)\\[1.0ex]
                \sum_{ij \ell m} E_{ij \ell m}^{2} p_i(x_2)p_j(x_1)p_{\ell}(x_3)p_{m}(x_4)\\[1.0ex]
                \sum_{ij \ell m} E_{ij \ell m}^{3} p_i(x_3)p_j(x_1)p_{\ell}(x_2)p_{m}(x_4)\\[1.0ex]
			\sum_{ij \ell m} E_{ij \ell m}^{4} p_i(x_4)p_j(x_1)p_{\ell}(x_2)p_{m}(x_3) 
		\end{bmatrix}, \qquad  p_i\in P^{k-1}(\C{1}), \quad p_{j},p_{\ell},p_{m} \in P^{k-2}(\C{1}).
     \end{align*}
     %
     %
    %
    The proof follows immediately by choosing the test function $q = \widehat{E}$, and thereafter substituting this function and $E$ (from above) into Eq.~\eqref{eq:tesseract1a}. It is clear that the vanishing of the volumetric degrees of freedom is only possible if $\widehat{E}$ vanishes.
\end{proof}

\subsection{Dofs for 2-forms on $\C{4}$}
By now our methodology is clear. We can enumerate the dimensions of $\Vk{k}{2}(\C{4})$ as follows
\begin{align*}
	\text{dim}\left(  \Vk{k}{2}(\C{4})\right) & = \text{dim}\left(\Sigma^{k,2}(\C{4})\right) = 6 \, \text{dim} \left(Q^{k,k,k-1,k-1}\right) = 6 k^2(k+1)^2.
\end{align*}
The face and facet degrees of freedom are well-defined for 2-forms. If we use the degrees of freedom defined in Eqs.~\eqref{eq:square-2} and \eqref{eq:hex2} as our trace degrees of freedom, we obtain
\begin{align*}
	\text{dim}\left(\Sigma_{trace}^{k,2}(\C{4})\right) &= 24 \, \text{dim}\left(Q^{k-1,k-1}(\C{2})\right) + 24 \, \text{dim} \left(Q^{k-2,k-1,k-1}(\C{3})\right) \\
    &= 24 k^2 + 24 k^2(k-1) =24 k^3. 
\end{align*}
We can now define the set of volumetric degrees of freedom for the 2-form proxy $F$ as follows
\begin{eqnarray}
	\Sigma_{vol}^{k,2}(\C{4}):=\left\{ F \rightarrow \int_{\C{4}
	} F:q,  \qquad q    \in \VtoM{\begin{bmatrix}
			Q^{k-1,k-1,k-2,k-2}\\[1.0ex]
			Q^{k-1,k-2,k-1,k-2}\\[1.0ex]
			Q^{k-1,k-2,k-2,k-1}\\[1.0ex]
			Q^{k-2,k-1,k-1,k-2}\\[1.0ex]
			Q^{k-2,k-1,k-2,k-1}\\[1.0ex]
			Q^{k-2,k-2,k-1,k-1}
			 \end{bmatrix}  }
	\right\}. \label{eq:tesseract2a}
\end{eqnarray}
Hence, we have that
\begin{align*}
\text{dim}\left(\Sigma_{vol}^{k,2}(\C{4})\right) &= \text{dim}\left(\Sigma^{k,2}(\C{4})\right) - \text{dim}\left(\Sigma_{trace}^{k,2}(\C{4})\right) \\
&= 6k^2(k+1)^2 - 24k^3 = 6k^2(k-1)^2. 
\end{align*}
It then remains for us to prove unisolvency.

\begin{lemma}
	Let $F\in \Vk{k}{2}(\C{4})$ be a polynomial 2-form for which all the degrees of freedom $\Sk{k}{2}(\C{4})$ vanish. Then $F \equiv 0.$ \label{tesseract_lemma_2}
\end{lemma}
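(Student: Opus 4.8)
The plan is to mirror the proofs of Lemmas~\ref{tesseract_lemma_0} and~\ref{tesseract_lemma_1} almost verbatim, exploiting the fact that all three lemmas share the same dimensional bookkeeping: we have already verified that $\text{dim}(\Sk{k}{2}(\C{4})) = \text{dim}(\Vk{k}{2}(\C{4})) = \text{dim}(\Sigma^{k,2}_{trace}(\C{4})) + \text{dim}(\Sigma^{k,2}_{vol}(\C{4}))$, so unisolvence reduces to showing that the vanishing of every degree of freedom on an $F \in \Vk{k}{2}(\C{4})$ forces $F \equiv 0$.

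First I would invoke the trace degrees of freedom: since the face dofs~\eqref{eq:square-2} and the hexahedral facet dofs~\eqref{eq:hex2} all vanish, the tangential trace $\text{Tr}[\mathcal{F}](F)$ vanishes on every facet $\mathcal{F}$ of $\C{4}$ (and consequently on every face, by the locality/consistency discussion in the Guiding Principles subsection, $\text{Tr}[f](F) = \text{Tr}[f](\text{Tr}[\mathcal{F}](F))$). Hence $F$ lies in the bubble space $\Vkcirc{k}{2}(\C{4})$, so by Eq.~\eqref{eq:2formCbubble} we may write
\begin{align*}
F = \VtoM{\begin{bmatrix}
\left(\Pi_{n \neq 1,2} (1-x_n^2)\right) \widehat{F}^{12} \\[0.6ex]
\left(\Pi_{n \neq 1,3} (1-x_n^2)\right) \widehat{F}^{13} \\[0.6ex]
\left(\Pi_{n \neq 1,4} (1-x_n^2)\right) \widehat{F}^{14} \\[0.6ex]
\left(\Pi_{n \neq 2,3} (1-x_n^2)\right) \widehat{F}^{23} \\[0.6ex]
\left(\Pi_{n \neq 2,4} (1-x_n^2)\right) \widehat{F}^{24} \\[0.6ex]
\left(\Pi_{n \neq 3,4} (1-x_n^2)\right) \widehat{F}^{34}
\end{bmatrix}},
\end{align*}
where each $\widehat{F}^{ab}$ is a polynomial ranging over the appropriate $P^{k-1} \otimes P^{k-1} \otimes P^{k-2} \otimes P^{k-2}$-type space indicated by the bubble construction. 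Then I would choose the test object $q = \VtoM{[\widehat{F}^{12}, \widehat{F}^{13}, \widehat{F}^{14}, \widehat{F}^{23}, \widehat{F}^{24}, \widehat{F}^{34}]^T}$, which lies precisely in the space appearing in~\eqref{eq:tesseract2a} (one checks componentwise that $\widehat{F}^{ab}$ has degree at most $k-1$ in the two "favored" variables $x_a, x_b$ and at most $k-2$ in the other two, matching the $Q$-exponents listed there). Substituting into~\eqref{eq:tesseract2a} and using that the Frobenius pairing $\VtoM{\vbold} : \VtoM{\wbold}$ equals (a positive multiple of) $\vbold \cdot \wbold$, the vanishing volumetric dof becomes $\sum_{a<b} \int_{\C{4}} \left(\Pi_{n \neq a,b}(1-x_n^2)\right) (\widehat{F}^{ab})^2 \, dx = 0$. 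Since each weight $\Pi_{n \neq a,b}(1-x_n^2)$ is nonnegative on $\C{4}$, every summand is nonnegative, forcing each $\widehat{F}^{ab} \equiv 0$, and hence $F \equiv 0$.

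The only step requiring genuine care — and thus the main obstacle — is verifying the exact matching between the polynomial degrees of the bubble components $\widehat{F}^{ab}$ in~\eqref{eq:2formCbubble} and the $Q$-spaces prescribed for $q$ in~\eqref{eq:tesseract2a}, i.e.\ confirming that the $\VtoM{\cdot}$-wrapped vector of the $\widehat{F}^{ab}$'s genuinely lands in the test space rather than a strictly larger or smaller one; this is precisely what makes $q = (\text{the } \widehat{F}\text{'s})$ an admissible choice and is the crux of unisolvence (it is also implicitly confirmed by the dimension count $\text{dim}(\Sigma^{k,2}_{vol}) = 6k^2(k-1)^2 = \text{dim}(\Vkcirc{k}{2}(\C{4}))$). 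A secondary technical point is justifying that vanishing of the face dofs~\eqref{eq:square-2} is not needed separately once the facet dofs~\eqref{eq:hex2} vanish and the trace-compatibility $\text{Tr}[f] = \text{Tr}[f]\circ\text{Tr}[\mathcal{F}]$ is in force — but this was already settled in general in the Guiding Principles subsection, so it can simply be cited. Everything else is routine, following the template of the two preceding lemmas exactly.
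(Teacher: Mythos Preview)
Your proposal is correct and follows essentially the same approach as the paper's proof: reduce to the bubble space via the vanishing trace dofs, then test the volumetric dof~\eqref{eq:tesseract2a} against $q=\VtoM{\widehat{F}}$ and use positivity of the weights $(1-x_n^2)$ to conclude. If anything, you are slightly more explicit than the paper in spelling out the Frobenius-pairing/positivity step and in flagging the degree-matching check between the bubble components and the test space in~\eqref{eq:tesseract2a}.
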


\begin{proof}
	As before, we note
	\begin{align*}
	    \text{dim}\left(\Sk{k}{2} (\C{4})\right) = \text{dim}\left(\Vk{k}{2}(\C{4})\right) = \text{dim} \left(\Sigma_{trace}^{k,2} (\C{4}) \right) + \text{dim}\left(\Sigma_{vol}^{k,2}(\C{4})\right).
	\end{align*}
	Unisolvency will follow immediately if we can show that the vanishing of all degrees of freedom for $F\in \Vk{k}{2}(\C{4})$ implies the vanishing of $F$.
 
    Since all the trace degrees of freedom of the form given by Eqs.~\eqref{eq:square-2} and \eqref{eq:hex2} vanish, the polynomial 2-form $F$ has zero traces, and is hence in $\Vkcirc{k}{2}(\C{4})$. It therefore has the form given by Eq.~\eqref{eq:2formCbubble}, i.e., 
    \begin{align*}
	  F&= \VtoM{\begin{bmatrix}
				\left(\Pi_{n=1, n\neq1,2}^{4} (1-x_{n}^{2}) \right)                     \widehat{F}^{1} \\[1.0ex]
                    \left(\Pi_{n=1, n\neq1,3}^{4} (1-x_{n}^{2}) \right) \widehat{F}^{2} \\[1.0ex]
                    \left(\Pi_{n=1, n\neq1,4}^{4} (1-x_{n}^{2}) \right) \widehat{F}^{3} \\[1.0ex]
                    \left(\Pi_{n=1, n\neq2,3}^{4} (1-x_{n}^{2}) \right) \widehat{F}^{4} \\[1.0ex]
                    \left(\Pi_{n=1, n\neq2,4}^{4} (1-x_{n}^{2}) \right) \widehat{F}^{5} \\[1.0ex]
                    \left(\Pi_{n=1, n\neq3,4}^{4} (1-x_{n}^{2}) \right) \widehat{F}^{6}
		\end{bmatrix}  }, 
     \end{align*}
     where 
     \begin{align*}
         \widehat{F}:= \begin{bmatrix} \widehat{F}^{1} \\[1.0ex] \widehat{F}^{2} \\[1.0ex] \widehat{F}^{3} \\[1.0ex] \widehat{F}^{4} \\[1.0ex] \widehat{F}^{5} \\[1.0ex] \widehat{F}^{6} \end{bmatrix} = \begin{bmatrix}
				\sum_{ij \ell m} F_{ij \ell m}^{1}                     p_i(x_1)p_j(x_2)p_{\ell}(x_3)p_{m}(x_4) \\[1.0ex]
                    \sum_{ij \ell m} F_{ij \ell m}^{2} p_i(x_1)p_j(x_3)p_{\ell}(x_2)p_{m}(x_4) \\[1.0ex]
                    \sum_{ij \ell m} F_{ij \ell m}^{3} p_i(x_1)p_j(x_4)p_{\ell}(x_2)p_{m}(x_3) \\[1.0ex]
                    \sum_{ij \ell m} F_{ij \ell m}^{4} p_i(x_2)p_j(x_3)p_{\ell}(x_1)p_{m}(x_4) \\[1.0ex]
                    \sum_{ij \ell m} F_{ij \ell m}^{5} p_i(x_2)p_j(x_4)p_{\ell}(x_1)p_{m}(x_3) \\[1.0ex]
                    \sum_{ij \ell m} F_{ij \ell m}^{6} p_i(x_3)p_j(x_4)p_{\ell}(x_1)p_{m}(x_2)
		\end{bmatrix}, \qquad   p_{i}, p_{j} \in P^{k-1}(\C{1}), \qquad p_{\ell}, p_{m} \in P^{k-2}(\C{1}).
     \end{align*}
    The proof follows immediately by choosing the test function $q = \VtoM{ \widehat{F}}$, and thereafter substituting this function and $F$ (from above) into Eq.~\eqref{eq:tesseract2a}. The vanishing of the volumetric degrees of freedom is only possible if $\widehat{F}$ vanishes.
\end{proof}

\subsection{Dofs for 3-forms on $\C{4}$}

We start by enumerating the number of dimensions
\begin{align*}
\text{dim} \left(\Vk{k}{3}(\C{4})\right) = \text{dim}\left(\Sigma^{k,3}(\C{4}) \right) = 4k^3(k+1).  
\end{align*}
%
%
There are 8 hexahedral facets for $\C{4}$, and therefore, upon using the associated hexahedral degrees of freedom (Eq.~\eqref{eq:hex3}) to define $\Sk{k}{3}_{trace}(\C{4})$, we obtain
\begin{align*}
	\text{dim}\left( \Sk{k}{3}_{trace}(\C{4})\right) = 8 k^3.
\end{align*}
Next, we need to specify degrees of freedom for the volume
\begin{align*}
     \text{dim}\left(\Sk{k}{3}_{vol}(\C{4})\right) &= \text{dim}\left(\Sk{k}{3}(\C{4})\right) - \text{dim}\left(\Sk{k}{3}_{trace}(\C{4})\right) = 4k^3(k-1).
\end{align*}
	We propose the following volumetric degrees of freedom for the 3-form proxy $G$
    \begin{align}
		\nonumber &\Sk{k}{3}_{vol}(\C{4}):=\Bigg\{ G \rightarrow \int_{\C{4}} G \cdot q,  \\
		&q \in Q^{k-2,k-1,k-1,k-1}\times Q^{k-1,k-2,k-1,k-1}\times Q^{k-1,k-1,k-2,k-1}\times Q^{k-1,k-1,k-1,k-2} \Bigg\}. \label{eq:tesseract3a} 
    \end{align}
    It then remains for us to prove unisolvency.

    \begin{lemma}
        Let $G\in \Vk{k}{3}(\C{4})$ be a polynomial 3-form for which all the degrees of freedom $\Sk{k}{3}(\C{4})$ vanish. Then $G \equiv 0.$
    \end{lemma}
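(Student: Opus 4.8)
The plan is to follow exactly the template used in the proofs of Lemmas~\ref{tesseract_lemma_0}--\ref{tesseract_lemma_2}. The dimension bookkeeping has already been carried out above, so that $\text{dim}\left(\Sk{k}{3}(\C{4})\right) = \text{dim}\left(\Vk{k}{3}(\C{4})\right) = \text{dim}\left(\Sk{k}{3}_{trace}(\C{4})\right) + \text{dim}\left(\Sk{k}{3}_{vol}(\C{4})\right)$; consequently it suffices to show that if \emph{all} the degrees of freedom vanish on some $G \in \Vk{k}{3}(\C{4})$, then $G \equiv 0$. First I would invoke the vanishing of the hexahedral facet degrees of freedom~\eqref{eq:hex3}: on each of the eight facets $\{x_i = \pm 1\}$, the normal trace of $G$ is the component $G_i$ restricted to that hyperplane, which lies in $Q^{k-1,k-1,k-1}$ of the three tangential variables, and since~\eqref{eq:hex3} pairs this trace against all of $Q^{k-1,k-1,k-1}(\mathcal{F})$ it is unisolvent there. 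Hence each $G_i$ vanishes on $\{x_i = \pm 1\}$, so $(1-x_i^2)$ divides $G_i$, and therefore $G \in \Vkcirc{k}{3}(\C{4})$, i.e.\ $G$ has the form~\eqref{eq:3formCbubble}.

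Next I would write out the bubble representation, $G = \left[(1-x_1^2)\widehat{G}^1, \dots, (1-x_4^2)\widehat{G}^4\right]^T$, noting that $\widehat{G}^1 \in Q^{k-2,k-1,k-1,k-1}$ and similarly (with the distinguished variable rotated) for $\widehat{G}^2, \widehat{G}^3, \widehat{G}^4$. The key observation is that the tuple $\widehat{G} = (\widehat{G}^1, \dots, \widehat{G}^4)$ lies \emph{exactly} in the test space appearing in~\eqref{eq:tesseract3a}, so $q = \widehat{G}$ is an admissible test function. Substituting it into~\eqref{eq:tesseract3a} yields
\begin{align*}
0 = \int_{\C{4}} G \cdot \widehat{G}\, dx = \sum_{i=1}^{4} \int_{\C{4}} (1-x_i^2)\,\bigl(\widehat{G}^i\bigr)^2 \, dx.
\end{align*}
Since $1-x_i^2 > 0$ on the interior of $\C{4}$, every summand is nonnegative and hence each vanishes; as each $\widehat{G}^i$ is a polynomial, this forces $\widehat{G}^i \equiv 0$, and therefore $G \equiv 0$.

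The only delicate points here are bookkeeping rather than conceptual. The main thing to get right is the degree accounting in the reduction to the bubble space: one must check that dividing $G_i$ (degree $k$ in the distinguished variable and $k-1$ in the others) by $(1-x_i^2)$ leaves a factor of precisely the tensorial bidegree $Q^{k-2,k-1,k-1,k-1}$ (up to permutation) that simultaneously spans the bubble space~\eqref{eq:3formCbubble} and constitutes the test space in~\eqref{eq:tesseract3a} — this alignment is exactly what makes the choice $q = \widehat{G}$ legitimate. The secondary point, as in the earlier lemmas, is confirming that the facet degrees of freedom~\eqref{eq:hex3} are unisolvent for $\text{Tr}[\mathcal{F}]\left(\Vk{k}{3}(\C{4})\right) = \Vk{k}{3}(\mathcal{F})$, which is immediate from the tensor-product structure and the guiding principles of Section~2. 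I do not expect any genuinely new obstacle beyond these verifications.
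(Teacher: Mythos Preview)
Your proposal is correct and follows exactly the approach the paper intends: the paper's own proof simply states that it ``directly follows the proofs of Lemmas~\ref{tesseract_lemma_0}, \ref{tesseract_lemma_1}, and \ref{tesseract_lemma_2} with $G$ and $q$ constructed using the definition of the bubble space,'' and you have faithfully carried out that template, including the correct degree bookkeeping ($G_i \in Q^{k,k-1,k-1,k-1}$ with the $k$ in slot $i$, so $(1-x_i^2)\mid G_i$ leaves $\widehat{G}^i$ in precisely the test space of Eq.~\eqref{eq:tesseract3a}). There is nothing to add.
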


\begin{proof}
    The proof is straightforward, as it directly follows the proofs of Lemmas~\ref{tesseract_lemma_0}, \ref{tesseract_lemma_1}, and \ref{tesseract_lemma_2} with $G$ and $q$ constructed using the definition of the bubble space, $\Vkcirc{k}{3}(\C{4})$, in Eq.~\eqref{eq:3formCbubble}.
\end{proof}

\subsection{Dofs for 4-forms on $\C{4}$}
	
	Trace degrees of freedom are not defined for 4-forms. Therefore, the degrees of freedom for $q \in \Vk{k}{4}(\C{4})$ are simply
	\begin{equation}
		\Sk{k}{4}(\C{4}):=\left\{ q \rightarrow \int_{\C{4}} qp, \qquad p \in Q^{k-1,k-1,k-1,k-1} \right\}. \label{eq:tesseract4a}
	\end{equation}
	Hence
	\begin{align*}
	    \text{dim}\left(\Sk{k}{4}(\C{4}) \right) = k^4.
	\end{align*}
	It then remains for us to prove unisolvency.

\begin{lemma}
    Let $q\in \Vk{k}{4}(\C{4})$ be a polynomial 4-form for which all the degrees of freedom $\Sk{k}{4}(\C{4})$ vanish. Then $q \equiv 0.$
\end{lemma}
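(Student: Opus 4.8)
The plan is to exploit the feature that distinguishes this case from the three preceding lemmas: for $4$-forms there are \emph{no} trace degrees of freedom, so the entire dual space $\Sk{k}{4}(\C{4})$ consists of the volumetric functionals in Eq.~\eqref{eq:tesseract4a}, and the multi-step argument of Lemmas~\ref{tesseract_lemma_0}, \ref{tesseract_lemma_1}, and \ref{tesseract_lemma_2} collapses to its final $L^2$-duality step. First I would recall that $\Vk{k}{4}(\C{4}) = Q^{k-1,k-1,k-1,k-1}$, so the polynomial $q$ in the hypothesis is itself an admissible test function $p$ appearing in Eq.~\eqref{eq:tesseract4a}.

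Next I would choose $p = q$ in Eq.~\eqref{eq:tesseract4a}. The vanishing of this particular degree of freedom gives
\begin{align*}
\int_{\C{4}} q^2 = 0,
\end{align*}
and since the integrand is non-negative and $q$ is a polynomial, this forces $q \equiv 0$ on $\C{4}$, which is the desired conclusion.

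For completeness I would also note that the dimension bookkeeping is consistent: $\text{dim}\left(\Vk{k}{4}(\C{4})\right) = \text{dim}\left(\Sk{k}{4}(\C{4})\right) = k^4$, so once we know that the only polynomial $4$-form annihilated by all the degrees of freedom is the zero form, the natural pairing between $\Vk{k}{4}(\C{4})$ and $\Sk{k}{4}(\C{4})$ is nondegenerate, and the triple $\left(\C{4}, \Vk{k}{4}(\C{4}), \Sk{k}{4}(\C{4})\right)$ is unisolvent. There is essentially no obstacle here: this is the degenerate, top-degree endpoint of the de Rham sequence, where the finite element space is a fully local $L^2$-type (discontinuous) space and unisolvence reduces to the standard observation that a polynomial orthogonal to itself vanishes. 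The only point requiring care is to state explicitly — as already remarked just before Eq.~\eqref{eq:tesseract4a} — that no trace degrees of freedom are present, so that no appeal to a bubble-space representation is needed.
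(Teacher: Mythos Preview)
Your proposal is correct and follows essentially the same approach as the paper: both arguments take $p = q$ in Eq.~\eqref{eq:tesseract4a} and conclude from $\int_{\C{4}} q^2 = 0$ that $q \equiv 0$. Your write-up is in fact slightly more explicit about why no trace or bubble-space argument is needed in this top-degree case.
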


\begin{proof}
    We begin by choosing a generic $q$, such that
    \begin{align*}
        q = \sum_{ij\ell m} q_{ij \ell m} p_i(x_1)p_j(x_2)p_{\ell}(x_3)p_{m}(x_4),
    \end{align*}
    where
    \begin{align*}
        p_{i}, p_{j}, p_{\ell}, p_{m} \in P^{k-1}(\C{1}).
    \end{align*}
    The proof follows immediately by setting the test function $p = q$, and thereafter substituting this function and $q$ (from above) into Eq.~\eqref{eq:tesseract4a}. Under these circumstances, the degrees of freedom are only guaranteed to vanish if the polynomial $q$ vanishes.
\end{proof}

\section{Interpolation and the Commuting Diagram Property}

We begin by considering a generic reference element $\widehat{K}$. In principle, this element can be a tesseract, pentatope, tetrahedral prism, or similar element. The remainder of this section will focus on the reference tesseract, but a similar analysis of other reference elements is possible. 

In this paper, we have introduced families of high-order conforming and unisolvent finite element spaces $(\widehat{K},\Vk{k}{s}(\widehat{K}), \Sk{k}{s}(\widehat{K}))$ on the reference element $\widehat{K}=\C{4}$. It is easy to check that these properties are retained under affine mappings of the element. By construction, the polynomial finite element spaces are exact on $\widehat{K}$. It remains for us to discuss interpolation and the commuting diagram property. We define interpolation operators $\Pi_{\widehat{K}}^s$ for suitably smooth $s$-forms $v \in \mathcal{H}^s(\widehat{K})$ by requiring 
\begin{align*}
    \ell( p - \Pi_{\widehat{K}}^sp) =0 , \qquad \forall \ell \in \Sk{k}{s}(\widehat{K}),
\end{align*}
where the $\ell$'s are the linear functionals associated with our degrees of freedom. We note that the local interpolants $\Pi_K^s$ on to mapped elements $K$ in physical space,  are defined in a similar fashion.

\begin{theorem}
Let $\Omega$ be a Lipschitz polyhedral domain in $\mathbb{R}^4$, tessellated by affine-mapped copies of a single reference element $\widehat{K}$ where $\widehat{K} = \C{4}$.

For $s=0,1,2,3,4$, let
\begin{align*}
    \mathtt{V}_k^{s}(\Omega):=\left\{ u_h\in \mathcal{H}^s(\Omega) \, : \, u_h \vert_{K} \in \Vk{k}{s}(K), \quad \forall \, \mbox{mapped} \,  K \, \mbox{in the mesh} \right\}.
\end{align*}
Define the global interpolant operator $\pi^s_h$
 for sufficiently smooth $s$-forms, $p\in \mathcal{H}^s(\Omega)$, via the local interpolation operators $\Pi_{K}^s$ as
\begin{align*}
    (\pi_h^s p)\vert_K = \Pi^s_Kp, \qquad \forall \, \mbox{mapped} \, K \, \mbox{in the mesh}.
\end{align*}
Then:
\begin{itemize}
\item For s=0,1,2,3 
\begin{align}
d^{(s)}  \mathtt{V}_k^{s}(\Omega) \subset  \mathtt{V}_k^{s+1}(\Omega).
\label{eq:nesting}
\end{align}
\item Let $p\in \mathcal{H}^s(\Omega)$ be sufficiently smooth so that $\Pi^s_K(p) $ and $\pi^s_h(p)$ are well-defined. Then for $s=0,1,2,3$
\begin{equation}\label{eq:commute} d^{(s)} \pi_h^s p  = \pi_h^{s+1} d^{(s)} p.\end{equation}. 
\end{itemize} 
That is, with these global interpolants, the following commuting diagram property holds
\begin{align*}
\begin{matrix}
\mathcal{U}^{0}(\Omega) & \arrow{r}{d^{\left(0\right)}} & \mathcal{U}^{1}(\Omega) & \arrow{r}{d^{\left(1\right)}} & \mathcal{U}^{2}(\Omega) & \arrow{r}{d^{\left(2\right)}} & \mathcal{U}^{3}(\Omega)  & \arrow{r}{d^{\left(3\right)}} & \mathcal{U}^{4}(\Omega)                 \cr
\arrow{d}{\pi_{h}^{0}} &                      & \arrow{d}{\pi_{h}^{1}} &   & \arrow{d}{\pi_{h}^{2}} & & \arrow{d}{\pi_{h}^{3}} & & \arrow{d}{\pi_{h}^{4}} \cr
\mathtt{V}_k^{0}(\Omega)                   & \arrow{r}{d^{(0)}} & \mathtt{V}_k^{1}(\Omega) &  \arrow{r}{d^{(1)}} & \mathtt{V}_k^{2}(\Omega) &  \arrow{r}{d^{(2)}} & \mathtt{V}_k^{3}(\Omega)  &  \arrow{r}{d^{(3)}} & \mathtt{V}_k^{4}(\Omega)            \cr
\end{matrix}
\end{align*} 
where $\mathcal{U}^s(\Omega)$ is a subspace of $\mathcal{H}^s(\Omega)$ such that the degrees of freedom on it are well-defined.

\end{theorem}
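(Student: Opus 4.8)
The plan is to reduce everything to the reference tesseract and then argue by induction on the dimension of the cell. The reduction rests on two standard facts. First, the exterior derivative commutes with the pullback, $\phi^{\ast} d^{(s)} = d^{(s)}\phi^{\ast}$; in proxy form this is the compatibility of the Piola-type transformations \eqref{new_map_orig}, \eqref{piola_map_orig} (and the two simpler ones) with $\mathrm{grad},\mathrm{skwGrad},\mathrm{curl},\mathrm{div}$, so that for an affine $\phi$ one has $\Pi^s_K = (\phi^{\ast})^{-1}\Pi^s_{\widehat K}\phi^{\ast}$ because the degrees of freedom are affine-invariant by construction. Second, both $d^{(s)}$ and $\pi^s_h$ act elementwise. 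Granting the element-level identity $d^{(s)}\Pi^s_{\widehat K}=\Pi^{s+1}_{\widehat K}d^{(s)}$ (the heart of the argument, below), the global identity \eqref{eq:commute} follows provided $\pi^s_h p\in\mathcal H^s(\Omega)$, which is exactly the conformity already built into the construction: on a facet $\mathcal F$ shared by $K$ and $K'$ the trace dofs are local to $\mathcal F$, so $\tr{\mathcal F}{\Pi^s_K p}$ and $\tr{\mathcal F}{\Pi^s_{K'} p}$ are both the facet interpolant of the single-valued $\tr{\mathcal F}{p}$ and hence agree. The nesting \eqref{eq:nesting} is similarly two observations: at the element level, applying the four operators to the tensor-product spaces \eqref{eq:0formC}--\eqref{eq:4formC} lowers exactly one coordinate degree in each monomial factor, so $d^{(s)}\Vk{k}{s}(\C{4})\subset\Vk{k}{s+1}(\C{4})$ by inspection of the $Q^{\cdots}$ exponents; and globally, $u_h\in\mathtt{V}_k^{s}(\Omega)\subset\mathcal H^s(\Omega)$ gives $d^{(s)}u_h\in\mathcal H^{s+1}(\Omega)$ in the distributional sense, while $d^{(s)}u_h\vert_K=d^{(s)}(u_h\vert_K)\in\Vk{k}{s+1}(K)$, so $d^{(s)}u_h\in\mathtt{V}_k^{s+1}(\Omega)$.

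For the element-level commuting identity, note that both $d^{(s)}\Pi^s_{\widehat K}p$ and $\Pi^{s+1}_{\widehat K}d^{(s)}p$ lie in $\Vk{k}{s+1}(\widehat K)$, on which $\Sk{k}{s+1}(\widehat K)$ is unisolvent (Lemmas~\ref{tesseract_lemma_0}--\ref{tesseract_lemma_2} and their $3$- and $4$-form counterparts). Hence it suffices to prove: for every $\ell\in\Sk{k}{s+1}(\widehat K)$, the functional $v\mapsto\ell(d^{(s)}v)$ is a linear combination of functionals in $\Sk{k}{s}(\widehat K)$ --- for then $\ell(d^{(s)}\Pi^s p)=\ell(d^{(s)}p)$ follows from $\Pi^s p$ matching $p$ in every dof of $\Sk{k}{s}$. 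I would prove this by induction on the dimension $d=1,2,3,4$ of the cell, the base case $d=1$ being a one-line integration by parts on an interval $e$: for $q\in P^{k-1}(e)$, $\int_e(\Pi^0 p-p)'q=-\int_e(\Pi^0 p-p)q'+\bigl[(\Pi^0 p-p)q\bigr]_{\partial e}=0$, since $q'\in P^{k-2}(e)$ is annihilated by the edge moment dofs \eqref{eq:edge-0} and the boundary term by the vertex dofs.

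For the trace dofs (those carried by a face $f$ of dimension $r$ on $\partial\widehat K$), I would use that the trace commutes with the exterior derivative, $\tr{f}{d^{(s)}v}=d^{(s)}_f\,\tr{f}{v}$ --- this is what the identities \eqref{trace_one_C}, \eqref{trace_two_C}, \eqref{trace_three} of Section~\ref{sobolev_sec} say in proxy language --- together with the fact that the trace dofs of $\Pi^s_{\widehat K}p$ on $f$ and on all sub-faces of $f$ are, by construction, the complete dof set of the lower-dimensional element $(f,\Vk{k}{s}(f),\Sk{k}{s}(f))$, so $\tr{f}{\Pi^s_{\widehat K}p}=\Pi^s_f(\tr{f}{p})$. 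Applying the inductive commuting identity on $f$ then shows that the $f$-dofs of $d^{(s)}\Pi^s_{\widehat K}p$ equal those of $d^{(s)}p$; equivalently, $v\mapsto(f\text{-dof of }d^{(s)}v)$ is a combination of $f$- and sub-$f$-dofs of $v$, all lying in $\Sk{k}{s}(\widehat K)$.

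For the volumetric dofs $v\mapsto\int_{\widehat K}(d^{(s)}v)\cdot q$, with $q$ in the test space of \eqref{eq:tesseract0a}, \eqref{eq:tesseract1a}, \eqref{eq:tesseract2a} or \eqref{eq:tesseract3a}, I would integrate by parts with the matching identity of Section~\ref{sobolev_sec}, obtaining $\int_{\widehat K}(d^{(s)}v)\cdot q=\pm\int_{\widehat K}v\cdot(\delta q)+\int_{\partial\widehat K}(\text{pairing of the trace of }v\text{ with }q)$ where $\delta$ is the formal adjoint ($\mathrm{div}$, $\mathrm{Div}$, $\mathrm{Curl}$ or $-\mathrm{grad}$). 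The crucial point is that $\delta q$ again lowers exactly one coordinate degree, hence lies in the volumetric test space attached to $\Sk{k}{s}(\widehat K)$, while the restriction of $q$ to each hexahedral facet $\mathcal F$ of $\C{4}$ lands in the facet test spaces \eqref{eq:hex0}--\eqref{eq:hex2}; so the interior term is (up to sign) a volumetric dof of $v$ and the boundary term a sum of facet dofs of $v$, both matched by $\Pi^s p$. The main obstacle is precisely this degree bookkeeping: one must check separately for $s=0,1,2,3$ that the adjoint operator carries the degree-$k$ volumetric test space for $(s{+}1)$-forms into that for $s$-forms, and that the normal/tangential traces of those test functions reproduce exactly the facet test spaces --- i.e., that the exponents in \eqref{eq:tesseract0a}--\eqref{eq:tesseract3a} and \eqref{eq:hex0}--\eqref{eq:hex3} were chosen consistently. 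This is routine but is where the construction would fail if any exponent were off by one; everything else is the structural induction above.
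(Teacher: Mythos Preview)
Your proposal is correct and rests on the same mechanism as the paper's proof: reduce to the reference element, use unisolvence of $\Sk{k}{s+1}$, and for each dof of $d^{(s)}(\Pi^s_{\widehat K}p-p)$ integrate by parts so that the interior term is tested against the formal adjoint of $q$ (which lands in the volumetric test space for $s$-forms) and the boundary term is a combination of lower-dimensional trace dofs. The degree bookkeeping you flag is exactly what the paper carries out explicitly.

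The organizational difference is worth noting. The paper proceeds case-by-case in $s$ and, for each $s$, repeats the integration-by-parts argument separately on the volume, on each hexahedral facet, on each quadrilateral face, and (for $s=0,1$) on each edge --- e.g.\ for $s=1$ it verifies the volume dofs via \eqref{trace_two_C}, then the facet dofs via the 3D curl identity, then the face dofs following Monk's Theorem~6.7. Your induction on cell dimension packages all of the trace-dof checks into a single step: once you know $\Tr{f}{\Pi^s_{\widehat K}p}=\Pi^s_f\Tr{f}{p}$ and $\Tr{f}{d^{(s)}v}=d^{(s)}_f\Tr{f}{v}$, the commuting identity on the lower-dimensional cube $f$ (available by induction) dispatches every non-volumetric dof at once. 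This is cleaner and avoids the repetition; the paper's version has the compensating advantage that each step is written out in proxy notation with the specific 3D/2D operators $\nabla\times$, $\nabla\cdot$, which may be friendlier for implementation-minded readers.
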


\begin{proof}
We present the proof for meshes comprised of affine-mapped copies of $\widehat{K}=\C{4}$. 
    For the first statement (Eq.~\eqref{eq:nesting}), let $v_h \in \mathtt{V}_k^s(\Omega).$ Then $v_h \in \mathcal{H}^s(\Omega)$ by definition.  Proceeding element-by-element for each tesseract $K$ in the tessellation, it is easy to check that
    \begin{align*}
        d^{(s)}[v_h \vert_K] \in \Vk{k}{s+1}(K).
    \end{align*}
    This proves the first statement.
    
    To prove the next statement (Eq.~\eqref{eq:commute}), it is enough to show that the degrees of freedom of $d^{(s)} \pi_h^s p$ and  $\pi_h^{s+1} d^{(s)} p$ agree on each tesseract $K$, and indeed, it suffices to show the statement on the reference tesseract $\C{4}$. We proceed in steps, by establishing Eq.~\eqref{eq:commute} for $s=0,1,2,3$ in turn.

Let $s=0$ and let $e$ be an edge of $\widehat{K} = K=\C{4}$ with tangent $\tau$. In addition, let $A$ and $B$ denote the endpoints of the edge.  We can compute:
\begin{align*}
    \int_e \mathrm{Tr} \left(d^{(0)} \Pi^0_K p - \Pi_K^1 d^{(0)} p\right) \cdot \tau q \, ds = \int_e \mathrm{Tr} \left(d^{(0)} \Pi^0_K p - d^{(0)}p\right) \cdot \tau q \, ds,\quad \forall q \in P^{k}(e)
\end{align*}
which follows from the definition of the interpolant $\Pi_K^1$. Next, by applying integration-by-parts, we obtain
\begin{align*}
    \int_e \mathrm{Tr} \left(d^{(0)} \Pi^0_K p -  d^{(0)}p\right) \cdot \tau q \, ds  &= \int_e \left(\frac{\partial}{\partial s} \mathrm{Tr}(\Pi^0_K p - p) \right) \cdot \tau q \, ds \\[1.0ex]
    &= \tau q \cdot \mathrm{Tr}(\Pi^0_K p - p)\vert_A^B - \int_e \tau \cdot \left(\frac{\partial q}{\partial s} \right) \mathrm{Tr}(\Pi^0_K p - p) \, ds, \qquad  \forall q \in P^{k}(e).
\end{align*} 
But since $\Pi^{0}_{K}$ is a 0-form interpolant at the vertices, the first term on the RHS above must vanish. In addition, the second term must vanish in accordance with Eq.~\eqref{eq:edge-1}. This shows that the edge degrees of freedom of $d^{(0)} \Pi^0_K p$  and $\Pi_K^1 d^{(0)}p$ agree. A similar argument for the face, facet, and volume degrees of freedom gives the desired result (Eq.~\eqref{eq:commute}) for $s=0$.

Next, let $s=1$. In addition, let 
\begin{align*}
q \in \VtoM{\begin{bmatrix}
			Q^{k-1,k-1,k-2,k-2}\\[1.0ex]
			Q^{k-1,k-2,k-1,k-2}\\[1.0ex]
			Q^{k-1,k-2,k-2,k-1}\\[1.0ex]
			Q^{k-2,k-1,k-1,k-2}\\[1.0ex]
			Q^{k-2,k-1,k-2,k-1}\\[1.0ex]
			Q^{k-2,k-2,k-1,k-1}
			 \end{bmatrix}}, 
\end{align*}
and consider the \emph{volumetric} degrees of freedom
\begin{align*}
\int_{K}\left(d^{(1)} \Pi^1_K p -  \Pi^2_K d^{(1)} p\right):q \, dx &=\int_{K}\left(d^{(1)} \Pi^1_K p -  d^{(1)}p\right):q \, dx =\int_{K} \mathrm{skwGrad} \left( \Pi^1_K p -  p\right):q \, dx \\[1.0ex]
&=\int_K (\Pi^1_Kp-p)\cdot \mathrm{Div}(q) \, dx - \int_{\partial K}qn \cdot (\Pi^1_Kp-p) \, ds \\[1.0ex]
&=\int_K (\Pi^1_Kp-p)\cdot \mathrm{Div}(q) \, dx - \int_{\partial K} \mathrm{Tr}(qn) \cdot \mathrm{Tr}((\Pi^1_Kp-p)) \, ds,
\end{align*}
where the second-to-last line follows from Eq.~\eqref{trace_two_C}. On the RHS of the expression above, the integral over the volume $K$ vanishes by the definition of $\Pi_K^1$, the fact that $\mathrm{Div}(q) \in Q^{k-1,k-2,k-2,k-2} \times Q^{k-2,k-1,k-2,k-2} \times Q^{k-2,k-2,k-1,k-2} \times Q^{k-2,k-2,k-2,k-1} (K)$, and Eq.~\eqref{eq:tesseract1a}. In addition, it can be easily checked that for any of the normals on $K$, the quantity $qn$ can be identified with a vector $r \in Q^{k-1,k-2,k-2}\times Q^{k-2,k-1,k-2}\times Q^{k-2,k-2,k-1}(\partial K)$, and therefore the integral over $\partial K$ will also vanish in accordance with the definition of the interpolant $\Pi_K^1$, and Eq.~\eqref{eq:hex1}. This argument tells us that the volume dofs of $d^{(1)} \Pi^1_K p $ and $ \Pi^2_Kd^{(1)}p$ agree.

We now show that the \emph{facet} degrees of freedom of the 2-forms $d^{(1)} \Pi^1_K p $ and $ \Pi^2_Kd^{(1)}p$ agree. To this end, let $\mathcal{F}$ be a facet of $K$, and $q\in Q^{k-2,k-1,k-1}\times Q^{k-1,k-2,k-1}\times Q^{k-1,k-1,k-2}(\mathcal{F})$. We compute
\begin{align*}
    \int_{\mathcal{F}} \mathrm{Tr}\left(d^{(1)} \Pi^1_K p -  \Pi^2_Kd^{(1)}p\right)\cdot q \, dx &= \int_{\mathcal{F}} \mathrm{Tr}\left(d^{(1)} \Pi^1_K p -d^{(1)}p\right)\cdot q \, dx \\[1.0ex]
    &=\int_{\mathcal{F}} d^{(1)} \mathrm{Tr}\left(\Pi^1_K p -p\right)\cdot q \, dx = \int_{\mathcal{F}} \nabla \times \mathrm{Tr}\left(\Pi^1_K p -p\right)\cdot q \, dx\\[1.0ex]
    &= \int_{\mathcal{F}} \mathrm{Tr}\left(\Pi^1_K p -p\right) \cdot (\nabla \times q) \, dx - \int_{\partial \mathcal{F}} (\mathrm{Tr}\left(\Pi^1_K p -p\right)\times \nu) \cdot q \, ds.
\end{align*}
In the first term on the RHS above, $\nabla \times q \in Q^{k-1,k-2,k-2}\times Q^{k-2,k-1,k-2}\times Q^{k-2,k-2,k-1}(\mathcal{F})$. As a result, the integral over $\mathcal{F}$ vanishes in accordance with Eq.~\eqref{eq:hex1} and the definition of the interpolant. The integral over $\partial \mathcal{F}$ vanishes as well, in accordance with Eq.~\eqref{eq:square-1} and the definition of the interpolant.

Next, consider the \emph{face} degrees of freedom, and suppose that $f$ is a face of the tesseract. Let $q \in Q^{k-1,k-1}(f)$ and let $\nu$ be the outer normal, perpendicular to the face. We denote by $\nu_{f}$ the outer normal to the face $f$ in the plane containing $f$.
In accordance with the proof of Theorem 6.7 in~\cite{monkbook}:
\begin{align*}
    \int_f \mathrm{Tr}\left(d^{(1)} \Pi^1_K p -  \Pi^2_Kd^{(1)}p\right)\cdot \nu q \, dx&= \int_f \mathrm{Tr}\left(d^{(1)} \Pi^1_K p -d^{(1)}p\right)\cdot \nu q \, dx \\[1.0ex]
    &=\int_f d^{(1)} \mathrm{Tr}\left(\Pi^1_K p -p\right)\cdot \nu q \, dx = \int_f \nabla \times \mathrm{Tr}\left(\Pi^1_K p -p\right)\cdot \nu q \,dx \\[1.0ex]
    &=\int_f (\nabla \times \nu) \cdot \mathrm{Tr}\left(\Pi^1_K p -p\right)q \, dx -\int_f \nabla \cdot(\nu \times \mathrm{Tr}\left(\Pi^1_K p -p\right))q \, dx\\[1.0ex]
    &=-\int_f \nabla \cdot(\nu \times \mathrm{Tr}\left(\Pi^1_K p -p\right))q \, dx\\[1.0ex]
    &=\int_f (\nu \times \mathrm{Tr}\left(\Pi^1_K p -p\right))\cdot \nabla q \, dx- \int_{\partial f} \nu_{f} \cdot (\nu \times \mathrm{Tr} \left(\Pi^1_K p -p\right)) q \, ds.
\end{align*}
In the first term on the RHS above, $\nabla q \in Q^{k-2,k-1}\times Q^{k-1,k-2}(f).$ As a result, the integral over $f$ vanishes in accordance with Eq.~\eqref{eq:square-1} and the definition of the interpolant. The integral over $\partial f$ vanishes as well, in accordance with Eq.~\eqref{eq:edge-1} and the definition of the interpolant.
 
The edge degrees of freedom for $d^{(1)}\Pi_K^1p$ and $\Pi_K^2 d^{(1)}p$ can be shown to agree using very similar calculations, which are omitted here.

Now let $s=2$. In addition, let $q\in Q^{k-2,k-1,k-1,k-1}\times Q^{k-1,k-2,k-1,k-1}\times Q^{k-1,k-1,k-2,k-1}\times Q^{k-1,k-1,k-1,k-2} (K)$ for which the $\mathrm{Curl}$ operator is well-defined. We then consider an integral over the \emph{volumetric} degrees of freedom
\begin{align*}
    \int_{K} \left(d^{(2)} \Pi^2_K p -  \Pi^3_Kd^{(2)}p\right)\cdot q \, dx &= \int_{K} d^{(2)}\left(\Pi^2_K p - p\right)\cdot q \, dx = \int_{K} \text{curl}\left(\Pi^2_K p - p\right)\cdot q \, dx\\[1.0ex]
    &=\int_{K} \left(\Pi^2_K p - p\right): \text{Curl} (q) \, dx -\int_{\partial K} \mathrm{Tr}(n\times q) \cdot \mathrm{Tr}\left(\Pi^2_K p - p\right) \, ds,
\end{align*}
where the last line follows from Eq.~\eqref{trace_one_A}. On the RHS of the equation above, the integral over $K$ vanishes in accordance with the definition of $\Pi^2_K$, the fact that
\begin{align*}
\mathrm{Curl}(q) \in \VtoM{\begin{bmatrix}
			Q^{k-1,k-1,k-2,k-2}\\[1.0ex]
			Q^{k-1,k-2,k-1,k-2}\\[1.0ex]
			Q^{k-1,k-2,k-2,k-1}\\[1.0ex]
			Q^{k-2,k-1,k-1,k-2}\\[1.0ex]
			Q^{k-2,k-1,k-2,k-1}\\[1.0ex]
			Q^{k-2,k-2,k-1,k-1}
			 \end{bmatrix}}, 
\end{align*}
and Eq.~\eqref{eq:tesseract2a}. Next, it can be easily checked that for any of the normals on $K$, $(n \times q)$ yields a skew-symmetric matrix with three unique entries. In turn, there is a map between this matrix and a 3-vector $r \in Q^{k-2,k-1,k-1}\times Q^{k-1,k-2,k-1}\times Q^{k-1,k-1,k-2}(\partial K)$, and therefore the integral over $\partial K$ will vanish in accordance with the definition of the interpolant $\Pi_K^2$ and Eq.~\eqref{eq:hex2}. From this, the volume degrees of  freedom for $d^{(2)} \Pi^2_K p$ and $\Pi_K^3d^{(2)}p$ agree.

In a similar fashion, let us consider the \emph{facet} degrees of freedom. If $\mathcal{F}$ is a facet, and $q\in Q^{k-1,k-1,k-1}(\mathcal{F})$ we get
\begin{align*}
    \int_{\mathcal{F}} \mathrm{Tr} \left(d^{(2)} \Pi^2_K p -  \Pi^3_Kd^{(2)}p\right) q \, dx &= \int_{\mathcal{F}} d^{(2)} \mathrm{Tr}\left(\Pi^2_K p - p\right) q \, dx = \int_{\mathcal{F}} \nabla \cdot \mathrm{Tr}\left(\Pi^2_K p - p\right) q \, dx\\[1.0ex]
    &=-\int_{\mathcal{F}} \nabla q\cdot \mathrm{Tr}\left(\Pi^2_K p - p\right) \, dx + \int_{\partial \mathcal{F}} q \nu \cdot \mathrm{Tr}\left(\Pi^2_K p - p\right) \, ds.  
\end{align*}
In the first term on the RHS above, $\nabla q \in Q^{k-2,k-1,k-1}\times Q^{k-1,k-2,k-1} \times Q^{k-1,k-1,k-2}(\mathcal{F}).$ As a result, the integral over $\mathcal{F}$ vanishes in accordance with Eq.~\eqref{eq:hex2} and the definition of the interpolant. The integral over $\partial \mathcal{F}$ vanishes as well, in accordance with Eq.~\eqref{eq:square-2} and the definition of the interpolant. This calculation shows that the facet degrees of freedom for $d^{(2)} \Pi^2_K p$ and $\Pi_K^3d^{(2)}p$ agree. With some additional straightforward calculations, the face degrees of freedom are readily seen to match as well.

The case of $s =3$ can be treated using similar calculations. In addition, the proof for the situation where $\widehat{K}$ is a reference pentatope or tetrahedral prism follows in an analogous fashion, and is omitted here.
\end{proof}

\section{Conclusion}

During the course of this paper, we have successfully identified a natural sequence of Sobolev spaces in four dimensions: H(grad), H(skwGrad), H(curl), H(div), and L$^2$. We have described this sequence using both the language of linear algebra (scalars, vectors, and matrices), as well as the language of differential forms (0-forms, 1-forms, 2-forms, 3-forms, and 4-forms). A complete set of proxies have been developed in order to conveniently switch between the differential forms and the linear algebra entities. We believe that this explicit mathematical infrastructure will help facilitate the construction of new finite element spaces, as well as auxiliary preconditioners. In regards to finite element spaces, we have used this infrastructure to develop a preliminary set of high-order, conforming, finite element spaces on tesseract elements. The spaces, along with their associated degrees of freedom have been explicitly stated in straightforward language that is targeted towards scientists and engineers. In addition, we have rigorously proven the theoretical properties of the degrees of freedom for our spaces, including their unisolvency. Furthermore, we have provided a complete description of mapping operators between the reference space and the physical space for our chosen element. 

It is our hope that the work in this article will assist practitioners as they implement four-dimensional finite elements on tesseracts, and will encourage further developments in the analysis and implementation of conforming four-dimensional finite elements.

\section*{Declaration of Competing Interests}

The authors declare that they have no known competing financial interests or personal relationships that could have appeared to influence the work reported in this paper.



\section*{Funding}

This research did not receive any specific grant from funding agencies in the public, commercial, or not-for-profit sectors.

{\footnotesize\bibliography{technical-refs}}

\begin{thebibliography}{10}
\expandafter\ifx\csname url\endcsname\relax
  \def\url#1{\texttt{#1}}\fi
\expandafter\ifx\csname urlprefix\endcsname\relax\def\urlprefix{URL }\fi
\expandafter\ifx\csname href\endcsname\relax
  \def\href#1#2{#2} \def\path#1{#1}\fi

\bibitem{arnold2014finite}
D.~Arnold, G.~Awanou, Finite element differential forms on cubical meshes,
  Mathematics of Computation 83~(288) (2014) 1551--1570.

\bibitem{arnold2015finite}
D.~N. Arnold, D.~Boffi, F.~Bonizzoni, Finite element differential forms on
  curvilinear cubic meshes and their approximation properties, Numerische
  Mathematik 129~(1) (2015) 1--20.

\bibitem{monkbook}
P.~Monk,
  \href{https://doi.org/10.1093/acprof:oso/9780198508885.001.0001}{Finite
  element methods for {M}axwell's equations}, Numerical Mathematics and
  Scientific Computation, Oxford University Press, New York, 2003.
\newblock \href {http://dx.doi.org/10.1093/acprof:oso/9780198508885.001.0001}
  {\path{doi:10.1093/acprof:oso/9780198508885.001.0001}}.
\newline\urlprefix\url{https://doi.org/10.1093/acprof:oso/9780198508885.001.0001}

\bibitem{frontin2021foundations}
C.~V. Frontin, G.~S. Walters, F.~D. Witherden, C.~W. Lee, D.~M. Williams, D.~L.
  Darmofal, Foundations of space-time finite element methods: Polytopes,
  interpolation, and integration, Applied Numerical Mathematics 166 (2021)
  92--113.

\bibitem{proriol1957famille}
J.~Proriol, Sur une famille de polynomes {\'a} deux variables orthogonaux dans
  un triangle, Comptes Rendus Hebdomadaires des Seances de l Academie des
  Sciences 245~(26) (1957) 2459--2461.

\bibitem{koornwinder1975two}
T.~Koornwinder, Two-variable analogues of the classical orthogonal polynomials,
  in: Theory and Application of Special Functions, Elsevier, 1975, pp.
  435--495.

\bibitem{dubiner1991spectral}
M.~Dubiner, Spectral methods on triangles and other domains, Journal of
  Scientific Computing 6~(4) (1991) 345--390.

\bibitem{owens1998spectral}
R.~G. Owens, Spectral approximations on the triangle, Proceedings of the Royal
  Society of London. Series A: Mathematical, Physical and Engineering Sciences
  454~(1971) (1998) 857--872.

\bibitem{diosady2015higher}
L.~T. Diosady, S.~M. Murman, Higher-order methods for compressible turbulent
  flows using entropy variables, in: 53rd AIAA Aerospace Sciences Meeting,
  2015.

\bibitem{diosady2017tensor}
L.~T. Diosady, S.~M. Murman, Tensor-product preconditioners for higher-order
  space--time discontinuous {Galerkin} methods, Journal of Computational
  Physics 330 (2017) 296--318.

\bibitem{diosady2018linear}
L.~T. Diosady, S.~M. Murman, A linear-elasticity solver for higher-order
  space-time mesh deformation, in: 2018 AIAA Aerospace Sciences Meeting, 2018.

\bibitem{diosady2019scalable}
L.~T. Diosady, S.~M. Murman, Scalable tensor-product preconditioners for
  high-order finite-element methods: {Scalar} equations, Journal of
  Computational Physics 394 (2019) 759--776.

\bibitem{franciolini2020multigrid}
M.~Franciolini, S.~M. Murman, Multigrid preconditioning for a space-time
  spectral-element discontinuous-{Galerkin} solver, in: AIAA Scitech 2020
  Forum, 2020, p. 1314.

\bibitem{hughes1986new}
T.~J. Hughes, L.~P. Franca, M.~Mallet, A new finite element formulation for
  computational fluid dynamics: {I}. {Symmetric} forms of the compressible
  {Euler} and {Navier}-{Stokes} equations and the second law of thermodynamics,
  Computer Methods in Applied Mechanics and Engineering 54~(2) (1986) 223--234.

\bibitem{arnold2006finite}
D.~N. Arnold, R.~S. Falk, R.~Winther, Finite element exterior calculus,
  homological techniques, and applications, Acta Numerica 15 (2006) 1--155.

\bibitem{arnold2010finite}
D.~Arnold, R.~Falk, R.~Winther, Finite element exterior calculus: from hodge
  theory to numerical stability, Bulletin of the American Mathematical society
  47~(2) (2010) 281--354.

\bibitem{arnold2018finite}
D.~N. Arnold, Finite element exterior calculus, SIAM, 2018.

\bibitem{arnold2014periodic}
D.~N. Arnold, A.~Logg, Periodic table of the finite elements, SIAM News 47~(9)
  (2014) 212.

\bibitem{arnold2008finite}
D.~Arnold, G.~Awanou, R.~Winther, Finite elements for symmetric tensors in
  three dimensions, Mathematics of Computation 77~(263) (2008) 1229--1251.

\bibitem{arnold2014nonconforming}
D.~N. Arnold, G.~Awanou, R.~Winther, Nonconforming tetrahedral mixed finite
  elements for elasticity, Mathematical Models and Methods in Applied Sciences
  24~(04) (2014) 783--796.

\bibitem{chen2022finite}
L.~Chen, X.~Huang, A finite element elasticity complex in three dimensions,
  Mathematics of Computation 91~(337) (2022) 2095--2127.

\bibitem{licht2022basis}
M.~W. Licht, On basis constructions in finite element exterior calculus,
  Advances in Computational Mathematics 48~(2) (2022) 1--36.

\bibitem{kirby2014low}
R.~C. Kirby, Low-complexity finite element algorithms for the de {Rham} complex
  on simplices, SIAM Journal on Scientific Computing 36~(2) (2014) A846--A868.

\bibitem{rognes2010efficient}
M.~E. Rognes, R.~C. Kirby, A.~Logg, Efficient assembly of {H(div)} and
  {H(curl)} conforming finite elements, SIAM Journal on Scientific Computing
  31~(6) (2010) 4130--4151.

\bibitem{arnold2011serendipity}
D.~N. Arnold, G.~Awanou, The serendipity family of finite elements, Foundations
  of Computational Mathematics 11~(3) (2011) 337--344.

\bibitem{arnold2005rectangular}
D.~N. Arnold, G.~Awanou, Rectangular mixed finite elements for elasticity,
  Mathematical Models and Methods in Applied Sciences 15~(09) (2005)
  1417--1429.

\bibitem{arnold2015mixed}
D.~N. Arnold, G.~Awanou, W.~Qiu, Mixed finite elements for elasticity on
  quadrilateral meshes, Advances in Computational Mathematics 41~(3) (2015)
  553--572.

\bibitem{nigam2012high}
N.~Nigam, J.~Phillips, High-order conforming finite elements on pyramids, IMA
  Journal of Numerical Analysis 32~(2) (2012) 448--483.

\bibitem{nigam2012numerical}
N.~Nigam, J.~Phillips, Numerical integration for high order pyramidal finite
  elements, ESAIM: Mathematical Modelling and Numerical Analysis 46~(2) (2012)
  239--263.

\bibitem{gillette2016serendipity}
A.~Gillette, Serendipity and tensor product affine pyramid finite elements, The
  SMAI Journal of Computational Mathematics 2 (2016) 215--228.

\bibitem{natale2017structure}
A.~Natale, Structure-preserving finite element methods for fluids, Ph.D.
  thesis, Imperial College London (2017).

\bibitem{mcrae2016automated}
A.~T. McRae, G.-T. Bercea, L.~Mitchell, D.~A. Ham, C.~J. Cotter, Automated
  generation and symbolic manipulation of tensor product finite elements, SIAM
  Journal on Scientific Computing 38~(5) (2016) S25--S47.

\bibitem{natale2016compatible}
A.~Natale, J.~Shipton, C.~J. Cotter, Compatible finite element spaces for
  geophysical fluid dynamics, Dynamics and Statistics of the Climate System
  1~(1).

\bibitem{gillette2016construction}
A.~Gillette, A.~Rand, C.~Bajaj, Construction of scalar and vector finite
  element families on polygonal and polyhedral meshes, Computational Methods in
  Applied Mathematics 16~(4) (2016) 667--683.

\bibitem{gopalakrishnan2018auxiliary}
J.~Gopalakrishnan, M.~Neumuller, P.~S. Vassilevski, The auxiliary space
  preconditioner for the de {Rham} complex, SIAM Journal on Numerical Analysis
  56~(6) (2018) 3196--3218.

\bibitem{kleppner2014introduction}
D.~Kleppner, R.~Kolenkow, An introduction to mechanics, Cambridge University
  Press, 2014.

\bibitem{andersson2021relativistic}
N.~Andersson, G.~L. Comer, Relativistic fluid dynamics: physics for many
  different scales, Living Reviews in Relativity 24~(1) (2021) 1--251.

\bibitem{hubbard2015vector}
J.~H. Hubbard, B.~B. Hubbard, Vector calculus, linear algebra, and differential
  forms: a unified approach, Matrix Editions, 2015.

\bibitem{hiptmair1999canonical}
R.~Hiptmair, Canonical construction of finite elements, Mathematics of
  Computation 68~(228) (1999) 1325--1346.

\bibitem{Demkowicz10}
L.~Demkowicz, J.~Gopalakrishnan, A class of discontinuous {Petrov}--{Galerkin}
  methods. {Part} {I}: {The} transport equation, Computer Methods in Applied
  Mechanics and Engineering 199~(23) (2010) 1558--1572.

\bibitem{bazilevs2008isogeometric}
Y.~Bazilevs, V.~M. Calo, T.~J. Hughes, Y.~Zhang, Isogeometric fluid-structure
  interaction: theory, algorithms, and computations, Computational mechanics
  43~(1) (2008) 3--37.

\bibitem{gopalakrishnan2017mapped}
J.~Gopalakrishnan, J.~Schoberl, C.~Wintersteiger, Mapped tent pitching schemes
  for hyperbolic systems, SIAM Journal on Scientific Computing 39~(6) (2017)
  B1043--B1063.

\bibitem{langer2019space}
U.~Langer, O.~Steinbach, Space-Time Methods: Applications to Partial
  Differential Equations, Vol.~25, Walter de Gruyter GmbH \& Co KG, 2019.

\bibitem{evans2010partial}
L.~C. Evans, Partial differential equations, Vol.~19, American Mathematical
  Society, 2010.

\end{thebibliography}

\appendix

\section{Derivative Identities} \label{derivative_appendix}

In this section, we prove the derivative identities in Eqs.~\eqref{derivative_id_one} and \eqref{derivative_id_two}. Consider a generic 1-form
\begin{align*}
    \omega \in \Lambda^{1}(\Omega), \qquad \omega = \omega_{1} dx^{1} + \omega_{2} dx^{2} + \omega_{3} dx^{3} + \omega_{4} dx^{4}.
\end{align*}
We begin by computing $d^{(1)} \omega$
\begin{align}
    d^{(1)} \omega = d^{(1)} \omega_{1} \wedge dx^{1} + d^{(1)} \omega_{2} \wedge dx^{2} + d^{(1)} \omega_{3} \wedge dx^{3} + d^{(1)} \omega_{4} \wedge dx^{4}. \label{derivative_commute_one}
\end{align}
Next, from the definition of the exterior derivative, one obtains
\begin{align}
    d^{(1)} \omega_{i} = \frac{\partial \omega_{i}}{\partial x_1} dx^{1} + \frac{\partial \omega_{i}}{\partial x_2} dx^{2} + \frac{\partial \omega_{i}}{\partial x_3} dx^{3} + \frac{\partial \omega_{i}}{\partial x_4} dx^{4},  \label{derivative_commute_two}
\end{align}
for $i = 1, 2, 3, 4$. Upon substituting Eq.~\eqref{derivative_commute_two} into Eq.~\eqref{derivative_commute_one}, one obtains
\begin{align*}
    d^{(1)} \omega &= \left( \frac{\partial \omega_{2}}{\partial x_1} - \frac{\partial \omega_{1}}{\partial x_2} \right) dx^{1} \wedge dx^{2} + \left( \frac{\partial \omega_{3}}{\partial x_1} - \frac{\partial \omega_{1}}{\partial x_3} \right) dx^{1} \wedge dx^{3} \\[1.0ex]
    &+\left( \frac{\partial \omega_{4}}{\partial x_1} - \frac{\partial \omega_{1}}{\partial x_4} \right) dx^{1} \wedge dx^{4} + \left( \frac{\partial \omega_{3}}{\partial x_2} - \frac{\partial \omega_{2}}{\partial x_3} \right) dx^{2} \wedge dx^{3} \\[1.0ex]
    &+\left( \frac{\partial \omega_{4}}{\partial x_2} - \frac{\partial \omega_{2}}{\partial x_4} \right) dx^{2} \wedge dx^{4} + \left( \frac{\partial \omega_{4}}{\partial x_3} - \frac{\partial \omega_{3}}{\partial x_4} \right) dx^{3} \wedge dx^{4}. 
\end{align*}
It then follows that
\begin{align*}
    \Upsilon_{2} \left( d^{(1)} \omega \right) = \frac{1}{2}
   \begin{bmatrix}
        0 & \partial_{1} \omega_2 - \partial_{2} \omega_1 & \partial_{1} \omega_3 - \partial_{3} \omega_1 & \partial_{1} \omega_4 - \partial_{4} \omega_1 \\[1.0ex]
        \partial_{2} \omega_1 - \partial_{1} \omega_2 & 0 & \partial_{2} \omega_3 - \partial_{3} \omega_2 & \partial_{2} \omega_4 - \partial_{4} \omega_2 \\[1.0ex]
        \partial_{3} \omega_1 - \partial_{1} \omega_3 & \partial_{3} \omega_2 - \partial_{2} \omega_3 & 0 & \partial_{3} \omega_4 - \partial_{4} \omega_3 \\[1.0ex]
        \partial_{4} \omega_1 - \partial_{1} \omega_{4} & \partial_{4} \omega_2 - \partial_{2} \omega_4 & \partial_{4} \omega_3 - \partial_{3} \omega_4 & 0
   \end{bmatrix} = \text{skwGrad} \left( \Upsilon_{1} \omega \right).
\end{align*}
This completes the proof of Eq.~\eqref{derivative_id_one}.

Next, consider a generic 2-form
\begin{align*}
     \omega \in \Lambda^{2}(\Omega), \qquad \omega &= \omega_{12} dx^{1} \wedge dx^{2} +\omega_{13} dx^{1} \wedge dx^{3} + \omega_{14} dx^{1} \wedge dx^{4} \\
    & + \omega_{23} dx^{2} \wedge dx^{3} + \omega_{24} dx^{2} \wedge dx^{4} + \omega_{34} dx^{3} \wedge dx^{4}.
\end{align*}
We can compute $d^{(2)} \omega$ as follows
\begin{align}
    \nonumber d^{(2)} \omega &= d^{(2)}\omega_{12} \wedge dx^{1} \wedge dx^{2} +d^{(2)} \omega_{13} \wedge dx^{1} \wedge dx^{3} + d^{(2)} \omega_{14} \wedge dx^{1} \wedge dx^{4} \\
    & + d^{(2)} \omega_{23} \wedge dx^{2} \wedge dx^{3} + d^{(2)} \omega_{24} \wedge dx^{2} \wedge dx^{4} + d^{(2)} \omega_{34} \wedge dx^{3} \wedge dx^{4}. \label{derivative_commute_three}
\end{align}
Now, from the definition of the exterior derivative, one obtains
\begin{align}
    d^{(2)} \omega_{ij} = \frac{\partial \omega_{ij}}{\partial x_1} dx^{1} + \frac{\partial \omega_{ij}}{\partial x_2} dx^{2} + \frac{\partial \omega_{ij}}{\partial x_3} dx^{3} + \frac{\partial \omega_{ij}}{\partial x_4} dx^{4},  \label{derivative_commute_four}
\end{align}
where $1\leq i < j \leq 4$. By substituting Eq.~\eqref{derivative_commute_four} into Eq.~\eqref{derivative_commute_three}, we obtain
\begin{align*}
    d^{(2)} \omega &= \left( \frac{\partial \omega_{12}}{\partial x_3} - \frac{\partial \omega_{13}}{\partial x_2} + \frac{\partial \omega_{23}}{\partial x_1} \right) dx^{1} \wedge dx^{2} \wedge dx^{3} + \left( \frac{\partial \omega_{12}}{\partial x_4}  - \frac{\partial \omega_{14}}{\partial x_2} + \frac{\partial \omega_{24}}{\partial x_1} \right) dx^{1} \wedge dx^{2} \wedge dx^{4} \\[1.0ex]
    &+ \left( \frac{\partial \omega_{13}}{\partial x_4} - \frac{\partial \omega_{14}}{\partial x_3} + \frac{\partial \omega_{34}}{\partial x_1} \right) dx^{1} \wedge dx^{3} \wedge dx^{4} + \left( \frac{\partial \omega_{23}}{\partial x_4} - \frac{\partial \omega_{24}}{\partial x_3} + \frac{\partial \omega_{34}}{\partial x_2} \right) dx^{2} \wedge dx^{3} \wedge dx^{4}.
\end{align*}
It then follows that
\begin{align*}
    \Upsilon_{3} \left( d^{(2)} \omega \right) = \begin{bmatrix}
    \partial_{4} \omega_{23} - \partial_{3} \omega_{24} + \partial_{2} \omega_{34} \\[1.0ex]
    -\partial_{4} \omega_{13} + \partial_{3} \omega_{14} - \partial_{1} \omega_{34} \\[1.0ex]
    \partial_{4} \omega_{12} - \partial_{2} \omega_{14} + \partial_{1} \omega_{24} \\[1.0ex]
    -\partial_{3} \omega_{12} + \partial_{2} \omega_{13} - \partial_{1} \omega_{23}
    \end{bmatrix} = \text{curl} \left( \Upsilon_{2} \omega \right).
\end{align*}
This completes the proof of Eq.~\eqref{derivative_id_two}.

\section{Details of the Pullback Construction} \label{pull_back_appendix}

In this section, we construct the pullback operator for $H \left(\text{curl}, \Omega, \mathbb{K} \right)$, the curl space in four dimensions, (see Eq.~\eqref{new_map_orig}). All of the other pullback operators are fairly standard, and do not require a detailed derivation. With this in mind, consider a generic 2-form
\begin{align*}
     \omega \in \Lambda^{2}(\Omega), \qquad \omega &= \omega_{12} dx^{1} \wedge dx^{2} +\omega_{13} dx^{1} \wedge dx^{3} + \omega_{14} dx^{1} \wedge dx^{4} \\
    & + \omega_{23} dx^{2} \wedge dx^{3} + \omega_{24} dx^{2} \wedge dx^{4} + \omega_{34} dx^{3} \wedge dx^{4}.
\end{align*}
We are interested in computing
\begin{align*}
     \phi^{\ast} \omega &= \phi^{\ast} \left( \omega_{12} dx^{1} \wedge dx^{2} \right) + \phi^{\ast} \left(\omega_{13} dx^{1} \wedge dx^{3} \right) + \phi^{\ast} \left(\omega_{14} dx^{1} \wedge dx^{4} \right) \\
    & + \phi^{\ast} \left( \omega_{23} dx^{2} \wedge dx^{3} \right) + \phi^{\ast} \left(\omega_{24} dx^{2} \wedge dx^{4} \right) + \phi^{\ast} \left(\omega_{34} dx^{3} \wedge dx^{4} \right).
\end{align*}
We note that, for example
\begin{align*}
    \phi^{\ast} \left( \omega_{12} dx^{1} \wedge dx^{2} \right) = \left(\omega_{12} \circ \phi \right) d\left(x_{1} \circ \phi \right) \wedge d\left(x_{2} \circ \phi\right).
\end{align*}
Based on this example, it is convenient to compute the following quantities
\begin{align*}
    d\left(x_{i}\circ \phi \right) &= \frac{\partial \phi_i}{\partial x_1} dx^1 + \frac{\partial \phi_i}{\partial x_2} dx^2 + \frac{\partial \phi_i}{\partial x_3} dx^3 + \frac{\partial \phi_i}{\partial x_4} dx^4,  
\end{align*}
where $i = 1,2,3,4$.
Using these identities, we obtain the following expressions
\begin{align}
    \nonumber & \phi^{\ast} \left( \omega_{12} dx^{1} \wedge dx^{2} \right) = \left(\omega_{12} \circ \phi \right) d\left(x_{1} \circ \phi \right) \wedge d\left(x_{2} \circ \phi\right) \\[1.0ex]
    \nonumber &= \left(\omega_{12} \circ \phi \right)\Bigg( \frac{\partial \phi_1}{\partial x_1} \frac{\partial \phi_2}{\partial x_2} dx^1 \wedge dx^2 + \frac{\partial \phi_1}{\partial x_1} \frac{\partial \phi_2}{\partial x_3} dx^1 \wedge dx^3 + \frac{\partial \phi_1}{\partial x_1} \frac{\partial \phi_2}{\partial x_4} dx^1 \wedge dx^4 + \frac{\partial \phi_1}{\partial x_2} \frac{\partial \phi_2}{\partial x_1} dx^2 \wedge dx^1 \\[1.0ex]
    \nonumber &+\frac{\partial \phi_1}{\partial x_2} \frac{\partial \phi_2}{\partial x_3} dx^2 \wedge dx^3 + \frac{\partial \phi_1}{\partial x_2} \frac{\partial \phi_2}{\partial x_4} dx^2 \wedge dx^4 + \frac{\partial \phi_1}{\partial x_3} \frac{\partial \phi_2}{\partial x_1} dx^3 \wedge dx^1 + \frac{\partial \phi_1}{\partial x_3} \frac{\partial \phi_2}{\partial x_2} dx^3 \wedge dx^2 \\[1.0ex]
    &+ \frac{\partial \phi_1}{\partial x_3} \frac{\partial \phi_2}{\partial x_4} dx^3 \wedge dx^4 + \frac{\partial \phi_1}{\partial x_4} \frac{\partial \phi_2}{\partial x_1} dx^4 \wedge dx^1 + \frac{\partial \phi_1}{\partial x_4} \frac{\partial \phi_2}{\partial x_2} dx^4 \wedge dx^2 + \frac{\partial \phi_1}{\partial x_4} \frac{\partial \phi_2}{\partial x_3} dx^4 \wedge dx^3 \Bigg). 
\end{align}

\begin{align}
    \nonumber &\phi^{\ast} \left(\omega_{13} dx^{1} \wedge dx^{3} \right) = \left(\omega_{13} \circ \phi \right) d\left(x_{1} \circ \phi \right) \wedge d\left(x_{3} \circ \phi\right) \\[1.0ex]
    \nonumber &=\left(\omega_{13} \circ \phi \right) \Bigg(\frac{\partial \phi_1}{\partial x_1} \frac{\partial \phi_3}{\partial x_2} dx^1 \wedge dx^2 + \frac{\partial \phi_1}{\partial x_1} \frac{\partial \phi_3}{\partial x_3} dx^1 \wedge dx^3 + \frac{\partial \phi_1}{\partial x_1} \frac{\partial \phi_3}{\partial x_4} dx^1 \wedge dx^4 + \frac{\partial \phi_1}{\partial x_2} \frac{\partial \phi_3}{\partial x_1} dx^2 \wedge dx^1 \\[1.0ex]
    \nonumber &+\frac{\partial \phi_1}{\partial x_2} \frac{\partial \phi_3}{\partial x_3} dx^2 \wedge dx^3 + \frac{\partial \phi_1}{\partial x_2} \frac{\partial \phi_3}{\partial x_4} dx^2 \wedge dx^4 + \frac{\partial \phi_1}{\partial x_3} \frac{\partial \phi_3}{\partial x_1} dx^3 \wedge dx^1 + \frac{\partial \phi_1}{\partial x_3} \frac{\partial \phi_3}{\partial x_2} dx^3 \wedge dx^2 \\[1.0ex]
    &+\frac{\partial \phi_1}{\partial x_3} \frac{\partial \phi_3}{\partial x_4} dx^3 \wedge dx^4 + \frac{\partial \phi_1}{\partial x_4} \frac{\partial \phi_3}{\partial x_1} dx^4 \wedge dx^1 + \frac{\partial \phi_1}{\partial x_4} \frac{\partial \phi_3}{\partial x_2} dx^4 \wedge dx^2 + \frac{\partial \phi_1}{\partial x_4} \frac{\partial \phi_3}{\partial x_3} dx^4 \wedge dx^3 \Bigg).
\end{align}

\begin{align}
    \nonumber &\phi^{\ast} \left(\omega_{14} dx^{1} \wedge dx^{4} \right) = \left(\omega_{14} \circ \phi \right) d\left(x_{1} \circ \phi \right) \wedge d\left(x_{4} \circ \phi\right) \\[1.0ex]
    \nonumber &=\left(\omega_{14} \circ \phi \right) \Bigg(\frac{\partial \phi_1}{\partial x_1} \frac{\partial \phi_4}{\partial x_2} dx^1 \wedge dx^2 + \frac{\partial \phi_1}{\partial x_1} \frac{\partial \phi_4}{\partial x_3} dx^1 \wedge dx^3 + \frac{\partial \phi_1}{\partial x_1} \frac{\partial \phi_4}{\partial x_4} dx^1 \wedge dx^4 + \frac{\partial \phi_1}{\partial x_2} \frac{\partial \phi_4}{\partial x_1} dx^2 \wedge dx^1 \\[1.0ex]
    \nonumber &+ \frac{\partial \phi_1}{\partial x_2} \frac{\partial \phi_4}{\partial x_3} dx^2 \wedge dx^3 + \frac{\partial \phi_1}{\partial x_2} \frac{\partial \phi_4}{\partial x_4} dx^2 \wedge dx^4 + \frac{\partial \phi_1}{\partial x_3} \frac{\partial \phi_4}{\partial x_1} dx^3 \wedge dx^1 + \frac{\partial \phi_1}{\partial x_3} \frac{\partial \phi_4}{\partial x_2} dx^3 \wedge dx^2 \\[1.0ex]
    &+ \frac{\partial \phi_1}{\partial x_3} \frac{\partial \phi_4}{\partial x_4} dx^3 \wedge dx^4 + \frac{\partial \phi_1}{\partial x_4} \frac{\partial \phi_4}{\partial x_1} dx^4 \wedge dx^1 + \frac{\partial \phi_1}{\partial x_4} \frac{\partial \phi_4}{\partial x_2} dx^4 \wedge dx^2 + \frac{\partial \phi_1}{\partial x_4} \frac{\partial \phi_4}{\partial x_3} dx^4 \wedge dx^3 \Bigg).
\end{align}

\begin{align}
    \nonumber &\phi^{\ast} \left( \omega_{23} dx^{2} \wedge dx^{3} \right) = \left(\omega_{23} \circ \phi \right) d\left(x_{2} \circ \phi \right) \wedge d\left(x_{3} \circ \phi\right) \\[1.0ex]
    \nonumber &= \left(\omega_{23} \circ \phi \right) \Bigg(\frac{\partial \phi_2}{\partial x_1} \frac{\partial \phi_3}{\partial x_2} dx^1 \wedge dx^2 + \frac{\partial \phi_2}{\partial x_1} \frac{\partial \phi_3}{\partial x_3} dx^1 \wedge dx^3 + \frac{\partial \phi_2}{\partial x_1} \frac{\partial \phi_3}{\partial x_4} dx^1 \wedge dx^4 + \frac{\partial \phi_2}{\partial x_2} \frac{\partial \phi_3}{\partial x_1} dx^2 \wedge dx^1 \\[1.0ex]
    \nonumber & + \frac{\partial \phi_2}{\partial x_2} \frac{\partial \phi_3}{\partial x_3} dx^2 \wedge dx^3 + \frac{\partial \phi_2}{\partial x_2} \frac{\partial \phi_3}{\partial x_4} dx^2 \wedge dx^4 + \frac{\partial \phi_2}{\partial x_3} \frac{\partial \phi_3}{\partial x_1} dx^3 \wedge dx^1 + \frac{\partial \phi_2}{\partial x_3} \frac{\partial \phi_3}{\partial x_2} dx^3 \wedge dx^2 \\[1.0ex]
    \nonumber &+ \frac{\partial \phi_2}{\partial x_3} \frac{\partial \phi_3}{\partial x_4} dx^3 \wedge dx^4 + \frac{\partial \phi_2}{\partial x_4} \frac{\partial \phi_3}{\partial x_1} dx^4 \wedge dx^1 + \frac{\partial \phi_2}{\partial x_4} \frac{\partial \phi_3}{\partial x_2} dx^4 \wedge dx^2 + \frac{\partial \phi_2}{\partial x_4} \frac{\partial \phi_3}{\partial x_3} dx^4 \wedge dx^3 \Bigg).
\end{align}

\begin{align}
    \nonumber &\phi^{\ast} \left(\omega_{24} dx^{2} \wedge dx^{4} \right) = \left(\omega_{24} \circ \phi \right) d\left(x_{2} \circ \phi \right) \wedge d\left(x_{4} \circ \phi\right) \\[1.0ex]
    \nonumber &= \left(\omega_{24} \circ \phi \right) \Bigg(\frac{\partial \phi_2}{\partial x_1} \frac{\partial \phi_4}{\partial x_2} dx^1 \wedge dx^2 + \frac{\partial \phi_2}{\partial x_1} \frac{\partial \phi_4}{\partial x_3} dx^1 \wedge dx^3 + \frac{\partial \phi_2}{\partial x_1} \frac{\partial \phi_4}{\partial x_4} dx^1 \wedge dx^4 + \frac{\partial \phi_2}{\partial x_2} \frac{\partial \phi_4}{\partial x_1} dx^2 \wedge dx^1 \\[1.0ex]
    \nonumber &+ \frac{\partial \phi_2}{\partial x_2} \frac{\partial \phi_4}{\partial x_3} dx^2 \wedge dx^3 + \frac{\partial \phi_2}{\partial x_2} \frac{\partial \phi_4}{\partial x_4} dx^2 \wedge dx^4 + \frac{\partial \phi_2}{\partial x_3} \frac{\partial \phi_4}{\partial x_1} dx^3 \wedge dx^1 + \frac{\partial \phi_2}{\partial x_3} \frac{\partial \phi_4}{\partial x_2} dx^3 \wedge dx^2 \\[1.0ex]
    &+ \frac{\partial \phi_2}{\partial x_3} \frac{\partial \phi_4}{\partial x_4} dx^3 \wedge dx^4 + \frac{\partial \phi_2}{\partial x_4} \frac{\partial \phi_4}{\partial x_1} dx^4 \wedge dx^1 + \frac{\partial \phi_2}{\partial x_4} \frac{\partial \phi_4}{\partial x_2} dx^4 \wedge dx^2 + \frac{\partial \phi_2}{\partial x_4} \frac{\partial \phi_4}{\partial x_3} dx^4 \wedge dx^3 \Bigg).
\end{align}

\begin{align}
    \nonumber &\phi^{\ast} \left(\omega_{34} dx^{3} \wedge dx^{4} \right) = \left(\omega_{34} \circ \phi \right) d\left(x_{3} \circ \phi \right) \wedge d\left(x_{4} \circ \phi\right) \\[1.0ex]
    \nonumber & = \left(\omega_{34} \circ \phi \right) \Bigg(\frac{\partial \phi_3}{\partial x_1} \frac{\partial \phi_4}{\partial x_2} dx^1 \wedge dx^2 + \frac{\partial \phi_3}{\partial x_1} \frac{\partial \phi_4}{\partial x_3} dx^1 \wedge dx^3 + \frac{\partial \phi_3}{\partial x_1} \frac{\partial \phi_4}{\partial x_4} dx^1 \wedge dx^4 + \frac{\partial \phi_3}{\partial x_2} \frac{\partial \phi_4}{\partial x_1} dx^2 \wedge dx^1 \\[1.0ex]
    \nonumber &+ \frac{\partial \phi_3}{\partial x_2} \frac{\partial \phi_4}{\partial x_3} dx^2 \wedge dx^3 + \frac{\partial \phi_3}{\partial x_2} \frac{\partial \phi_4}{\partial x_4} dx^2 \wedge dx^4 + \frac{\partial \phi_3}{\partial x_3} \frac{\partial \phi_4}{\partial x_1} dx^3 \wedge dx^1 + \frac{\partial \phi_3}{\partial x_3} \frac{\partial \phi_4}{\partial x_2} dx^3 \wedge dx^2 \\[1.0ex]
    &+ \frac{\partial \phi_3}{\partial x_3} \frac{\partial \phi_4}{\partial x_4} dx^3 \wedge dx^4 + \frac{\partial \phi_3}{\partial x_4} \frac{\partial \phi_4}{\partial x_1} dx^4 \wedge dx^1 + \frac{\partial \phi_3}{\partial x_4} \frac{\partial \phi_4}{\partial x_2} dx^4 \wedge dx^2 + \frac{\partial \phi_3}{\partial x_4} \frac{\partial \phi_4}{\partial x_3} dx^4 \wedge dx^3 \Bigg).
\end{align}

Thereafter, we can show that
\begin{align}
    \nonumber \phi^{\ast} \omega &= \Bigg[\left(\omega_{12} \circ \phi \right) \left( \frac{\partial \phi_1}{\partial x_1} \frac{\partial \phi_2}{\partial x_2}  - \frac{\partial \phi_2}{\partial x_1} \frac{\partial \phi_1}{\partial x_2} \right) + \left(\omega_{13} \circ \phi \right) \left(\frac{\partial \phi_1}{\partial x_1} \frac{\partial \phi_3}{\partial x_2}  - \frac{\partial \phi_3}{\partial x_1} \frac{\partial \phi_1}{\partial x_2} \right) \\[1.0ex] 
    \nonumber &+\left(\omega_{14} \circ \phi \right) \left( \frac{\partial \phi_1}{\partial x_1} \frac{\partial \phi_4}{\partial x_2}  - \frac{\partial \phi_4}{\partial x_1} \frac{\partial \phi_1}{\partial x_2} \right) + \left(\omega_{23} \circ \phi \right) \left(\frac{\partial \phi_2}{\partial x_1} \frac{\partial \phi_3}{\partial x_2}  - \frac{\partial \phi_3}{\partial x_1} \frac{\partial \phi_2}{\partial x_2} \right) \\[1.0ex]
    \nonumber &+\left(\omega_{24} \circ \phi \right) \left( \frac{\partial \phi_2}{\partial x_1} \frac{\partial \phi_4}{\partial x_2}  - \frac{\partial \phi_4}{\partial x_1} \frac{\partial \phi_2}{\partial x_2} \right) + \left(\omega_{34} \circ \phi \right) \left(\frac{\partial \phi_3}{\partial x_1} \frac{\partial \phi_4}{\partial x_2}  - \frac{\partial \phi_4}{\partial x_1} \frac{\partial \phi_3}{\partial x_2} \right)\Bigg] dx^1 \wedge dx^2 \\[1.0ex]
    \nonumber &+\Bigg[\left(\omega_{12} \circ \phi \right) \left( \frac{\partial \phi_1}{\partial x_1} \frac{\partial \phi_2}{\partial x_3}  - \frac{\partial \phi_2}{\partial x_1} \frac{\partial \phi_1}{\partial x_3} \right) + \left(\omega_{13} \circ \phi \right) \left(\frac{\partial \phi_1}{\partial x_1} \frac{\partial \phi_3}{\partial x_3}  - \frac{\partial \phi_3}{\partial x_1} \frac{\partial \phi_1}{\partial x_3} \right) \\[1.0ex] 
    \nonumber &+\left(\omega_{14} \circ \phi \right) \left( \frac{\partial \phi_1}{\partial x_1} \frac{\partial \phi_4}{\partial x_3}  - \frac{\partial \phi_4}{\partial x_1} \frac{\partial \phi_1}{\partial x_3} \right) + \left(\omega_{23} \circ \phi \right) \left(\frac{\partial \phi_2}{\partial x_1} \frac{\partial \phi_3}{\partial x_3}  - \frac{\partial \phi_3}{\partial x_1} \frac{\partial \phi_2}{\partial x_3} \right) \\[1.0ex]
    \nonumber &+\left(\omega_{24} \circ \phi \right) \left( \frac{\partial \phi_2}{\partial x_1} \frac{\partial \phi_4}{\partial x_3}  - \frac{\partial \phi_4}{\partial x_1} \frac{\partial \phi_2}{\partial x_3} \right) + \left(\omega_{34} \circ \phi \right) \left(\frac{\partial \phi_3}{\partial x_1} \frac{\partial \phi_4}{\partial x_3}  - \frac{\partial \phi_4}{\partial x_1} \frac{\partial \phi_3}{\partial x_3} \right)\Bigg] dx^1 \wedge dx^3  \\[1.0ex]
    \nonumber &+\Bigg[\left(\omega_{12} \circ \phi \right) \left( \frac{\partial \phi_1}{\partial x_1} \frac{\partial \phi_2}{\partial x_4}  - \frac{\partial \phi_2}{\partial x_1} \frac{\partial \phi_1}{\partial x_4} \right) + \left(\omega_{13} \circ \phi \right) \left(\frac{\partial \phi_1}{\partial x_1} \frac{\partial \phi_3}{\partial x_4}  - \frac{\partial \phi_3}{\partial x_1} \frac{\partial \phi_1}{\partial x_4} \right) \\[1.0ex] 
    \nonumber &+\left(\omega_{14} \circ \phi \right) \left( \frac{\partial \phi_1}{\partial x_1} \frac{\partial \phi_4}{\partial x_4}  - \frac{\partial \phi_4}{\partial x_1} \frac{\partial \phi_1}{\partial x_4} \right) + \left(\omega_{23} \circ \phi \right) \left(\frac{\partial \phi_2}{\partial x_1} \frac{\partial \phi_3}{\partial x_4}  - \frac{\partial \phi_3}{\partial x_1} \frac{\partial \phi_2}{\partial x_4} \right) \\[1.0ex]
    \nonumber &+\left(\omega_{24} \circ \phi \right) \left( \frac{\partial \phi_2}{\partial x_1} \frac{\partial \phi_4}{\partial x_4}  - \frac{\partial \phi_4}{\partial x_1} \frac{\partial \phi_2}{\partial x_4} \right) + \left(\omega_{34} \circ \phi \right) \left(\frac{\partial \phi_3}{\partial x_1} \frac{\partial \phi_4}{\partial x_4}  - \frac{\partial \phi_4}{\partial x_1} \frac{\partial \phi_3}{\partial x_4} \right)\Bigg] dx^1 \wedge dx^4  \\[1.0ex]
    \nonumber &+\Bigg[\left(\omega_{12} \circ \phi \right) \left( \frac{\partial \phi_1}{\partial x_2} \frac{\partial \phi_2}{\partial x_3}  - \frac{\partial \phi_2}{\partial x_2} \frac{\partial \phi_1}{\partial x_3} \right) + \left(\omega_{13} \circ \phi \right) \left(\frac{\partial \phi_1}{\partial x_2} \frac{\partial \phi_3}{\partial x_3}  - \frac{\partial \phi_3}{\partial x_2} \frac{\partial \phi_1}{\partial x_3} \right) \\[1.0ex] 
    \nonumber &+\left(\omega_{14} \circ \phi \right) \left( \frac{\partial \phi_1}{\partial x_2} \frac{\partial \phi_4}{\partial x_3}  - \frac{\partial \phi_4}{\partial x_2} \frac{\partial \phi_1}{\partial x_3} \right) + \left(\omega_{23} \circ \phi \right) \left(\frac{\partial \phi_2}{\partial x_2} \frac{\partial \phi_3}{\partial x_3}  - \frac{\partial \phi_3}{\partial x_2} \frac{\partial \phi_2}{\partial x_3} \right) \\[1.0ex]
    \nonumber &+\left(\omega_{24} \circ \phi \right) \left( \frac{\partial \phi_2}{\partial x_2} \frac{\partial \phi_4}{\partial x_3}  - \frac{\partial \phi_4}{\partial x_2} \frac{\partial \phi_2}{\partial x_3} \right) + \left(\omega_{34} \circ \phi \right) \left(\frac{\partial \phi_3}{\partial x_2} \frac{\partial \phi_4}{\partial x_3}  - \frac{\partial \phi_4}{\partial x_2} \frac{\partial \phi_3}{\partial x_3} \right)\Bigg] dx^2 \wedge dx^3 
\end{align}

\begin{align}
    \nonumber &+\Bigg[\left(\omega_{12} \circ \phi \right) \left( \frac{\partial \phi_1}{\partial x_2} \frac{\partial \phi_2}{\partial x_4}  - \frac{\partial \phi_2}{\partial x_2} \frac{\partial \phi_1}{\partial x_4} \right) + \left(\omega_{13} \circ \phi \right) \left(\frac{\partial \phi_1}{\partial x_2} \frac{\partial \phi_3}{\partial x_4}  - \frac{\partial \phi_3}{\partial x_2} \frac{\partial \phi_1}{\partial x_4} \right) \\[1.0ex] 
    \nonumber &+\left(\omega_{14} \circ \phi \right) \left( \frac{\partial \phi_1}{\partial x_2} \frac{\partial \phi_4}{\partial x_4}  - \frac{\partial \phi_4}{\partial x_2} \frac{\partial \phi_1}{\partial x_4} \right) + \left(\omega_{23} \circ \phi \right) \left(\frac{\partial \phi_2}{\partial x_2} \frac{\partial \phi_3}{\partial x_4}  - \frac{\partial \phi_3}{\partial x_2} \frac{\partial \phi_2}{\partial x_4} \right) \\[1.0ex]
    \nonumber &+\left(\omega_{24} \circ \phi \right) \left( \frac{\partial \phi_2}{\partial x_2} \frac{\partial \phi_4}{\partial x_4}  - \frac{\partial \phi_4}{\partial x_2} \frac{\partial \phi_2}{\partial x_4} \right) + \left(\omega_{34} \circ \phi \right) \left(\frac{\partial \phi_3}{\partial x_2} \frac{\partial \phi_4}{\partial x_4}  - \frac{\partial \phi_4}{\partial x_2} \frac{\partial \phi_3}{\partial x_4} \right)\Bigg] dx^2 \wedge dx^4 \\[1.0ex]
    \nonumber &+\Bigg[\left(\omega_{12} \circ \phi \right) \left( \frac{\partial \phi_1}{\partial x_3} \frac{\partial \phi_2}{\partial x_4}  - \frac{\partial \phi_2}{\partial x_3} \frac{\partial \phi_1}{\partial x_4} \right) + \left(\omega_{13} \circ \phi \right) \left(\frac{\partial \phi_1}{\partial x_3} \frac{\partial \phi_3}{\partial x_4}  - \frac{\partial \phi_3}{\partial x_3} \frac{\partial \phi_1}{\partial x_4} \right) \\[1.0ex] 
    \nonumber &+\left(\omega_{14} \circ \phi \right) \left( \frac{\partial \phi_1}{\partial x_3} \frac{\partial \phi_4}{\partial x_4}  - \frac{\partial \phi_4}{\partial x_3} \frac{\partial \phi_1}{\partial x_4} \right) + \left(\omega_{23} \circ \phi \right) \left(\frac{\partial \phi_2}{\partial x_3} \frac{\partial \phi_3}{\partial x_4}  - \frac{\partial \phi_3}{\partial x_3} \frac{\partial \phi_2}{\partial x_4} \right) \\[1.0ex]
    \nonumber &+\left(\omega_{24} \circ \phi \right) \left( \frac{\partial \phi_2}{\partial x_3} \frac{\partial \phi_4}{\partial x_4}  - \frac{\partial \phi_4}{\partial x_3} \frac{\partial \phi_2}{\partial x_4} \right) + \left(\omega_{34} \circ \phi \right) \left(\frac{\partial \phi_3}{\partial x_3} \frac{\partial \phi_4}{\partial x_4}  - \frac{\partial \phi_4}{\partial x_3} \frac{\partial \phi_3}{\partial x_4} \right)\Bigg] dx^3 \wedge dx^4.
\end{align}
It immediately follows that
\begin{align}
    \Upsilon_{2} \phi^{\ast} \omega = D\phi^{T} \left[F \circ \phi \right] D\phi,
\end{align}
where $F = \Upsilon_{2} \omega$.

\end{document}